\documentclass[12pt]{article}
\usepackage[a4paper, total={6in, 8in}]{geometry}
\usepackage{amsmath}
\usepackage{amsfonts}
\usepackage{amsthm}
\usepackage{hyperref}
\usepackage{bbm}
\usepackage{empheq}
\usepackage{scrextend}
\usepackage{mathtools}
\usepackage{amssymb}
\usepackage{graphicx}

\graphicspath{ {./images/} }
\setlength{\parindent}{0pt}

\newcommand*{\p}{\mathbb{P}}
\usepackage{xcolor}
\usepackage{dsfont}
\usepackage{natbib}
\usepackage{graphicx}
\usepackage{cases}
\graphicspath{ {./images/} }
\usepackage{float}
\usepackage{multibib}
 \usepackage{algorithm} 
\newcites{SM}{References}

\setlength{\parindent}{0pt}
\usepackage{accents}
\newlength{\dhatheight}
\newcommand{\doublehat}[1]{%
    \settoheight{\dhatheight}{\ensuremath{\hat{#1}}}%
    \addtolength{\dhatheight}{-0.35ex}%
    \hat{\vphantom{\rule{1pt}{\dhatheight}}%
    \smash{\hat{#1}}}}

\newcommand{\norm}[1]{\left\lVert#1\right\rVert}
\usepackage{setspace}
\usepackage{booktabs}

\newtheorem{Lemma}{Lemma}[section]

\newtheorem{theorem}[Lemma]{Theorem}
\newtheorem{corollary}[Lemma]{Corollary}
\newtheorem{remark}[Lemma]{Remark}
\newtheorem{assumption}[Lemma]{Assumption}

\usepackage[utf8]{inputenc}

\theoremstyle{definition}

\date{}

\definecolor{darkblue}{rgb}{.1, 0.1,.8}
\definecolor{darkgreen}{rgb}{0,0.8,0.2}
\definecolor{darkred}{rgb}{.8, .1,.1}

\usepackage{mathtools}
\mathtoolsset{showonlyrefs}
\newcommand*{\E}{\mathbb{E}}
\newcommand*{\V}{\mathbb{V}}
\newcommand*{\U}{\mathbb{U}}
\newcommand*{\W}{\mathbb{W}}
\newcommand*{\N}{\mathbb{N}}
\newcommand*{\R}{\mathbb{R}}
\renewcommand{\P }{{\mathbb P}}
\newcommand{\1}{\mathbbm{1}}

\title{Choosing the Right Norm for Change Point Detection in Functional Data}

\author{
  {\normalsize Patrick Bastian} \\
{\normalsize  Ruhr-Universit\"at Bochum} \\
{\normalsize  Fakult\"at f\"ur Mathematik} \\
{\normalsize  44780 Bochum, Germany}
}

\begin{document}
\maketitle

\begin{abstract}
   We consider the problem of detecting a change point in a sequence of mean functions from a functional time series. We propose an $L^1$ norm based methodology and establish its theoretical validity both for classical and for relevant hypotheses. We compare the proposed method with currently available methodology that is based on the $L^2$ and supremum norms. Additionally we investigate the asymptotic behaviour under the alternative for all three methods and showcase both theoretically and empirically that the $L^1$ norm achieves the best performance in a broad range of scenarios. We also propose a power enhancement component that improves the performance of the $L^1$ test against sparse alternatives. Finally we apply the proposed methodology to both synthetic and real data.
\end{abstract}

\section{Introduction}
\label{sec1}
\textbf{Retrospective Change Point Detection}
In this paper we consider the problem of change point detection in the mean of a functional time series $(X_n)_{n \in \N}$. We denote the mean function at time $n$ by $\mu_n$ and suppose that the data follows the model
\begin{align}
    \mu_1=\mu_2=...=\mu_{k^*}\neq\mu_{k^*+1}=...=\mu_n 
\end{align}
i.e. the time series mean functions have at most one change (AMOC). We denote the mean before and after $k^*$ by $\mu^{(1)}$ and $\mu^{(2)}$, respectively. The problem of interest is then to test
\begin{align}
\label{hypo}
    H_0:\mu^{(1)}=\mu^{(2)} \quad vs \quad H_1:\mu^{(1)}\neq \mu^{(2)}
\end{align}
This problem has received much attention in the literature and we refer the interested reader to the literature review below for further references.\\

\textbf{Cusum Statistics and the choice of norm}
\underline{Cu}mulative \underline{su}m statistics such as
\begin{align}
    \mathbb{U}_n(s,t)=\frac{1}{n}\Big(\sum_{i=1}^{ns}X_i(t)-s\sum_{i=1}^nX_i(t)\Big)
\end{align}
are a central object in the study of testing problems such as \eqref{hypo}. A common approach is to use 
\begin{align}
\label{pb40}
   \hat k&=\arg \max_{s \in [0,1]} \norm{\U(s,\cdot)}\\
   \hat T_n&=\sqrt{n}\max_{s \in [0,1]}\norm{U(s,\cdot)}
\end{align}
as change point estimator and test statistic, respectively. Here $\norm{\cdot}$ is usually the $L^2$ norm. Beginning with \cite{dette2020} there also have been a number of works adopting the supremum norm which allows for more interpretable results particularly in the case of relevant hypotheses.  To the best of our knowledge (see the literature review below) no other norms have been considered for this approach, nor have there been any in depth comparisons between the two choices. The aim of this paper is to close this gap and to propose another alternative that has, in comparison, favorable properties in a number of situations.\\

\textbf{Literature Review}
The literature on change point analysis is vast and a review could easily fill a whole book, we therefore confine ourselves to reviewing change point detection in the context of functional data with a focus on restrospective testing problems.\\

Early work in this area focused mostly on samples of independent curves, \cite{Berkes2009} developed statistical methodology to test for a structural break in the mean function under the assumption of iid errors while \cite{Aue2009} presents results about the asymptotic distribution of a change point estimator in a similar setting. In \cite{Aston2012} these results were extended to weakly dependend time series.  \cite{Zhang2011} propose a self-normalized procedure to test for a structural break in the mean of weakly dependent time series. Extensions for detecting structural breaks in linear models or for detecting smooth deviations from stationarity are available in \cite{Aue2014} and \cite{Delft2017}, respectively. Changes in the (cross-)covariance structure have been considered by \cite{Rice2019},\cite{Stoehr2021} and \cite{Kutta2021}. \\

The above references present their work in the Hilbert space framework and most of them make extensive use of functional principal components analysis (FPCA), i.e. reducing an infinite dimensional problem to a finite dimensional one. This incurs a loss of information which yields inconsistent test procedures when the alternative is part of the orthogonal complement of the principal components. In recent years there have been some efforts to remedy this defect, such as the fully functional change point detection procedures for the mean in \cite{Sharipov2016} and \cite{Aue2017} and for the slope parameter of a functional linear model in \cite{Kutta2022}.\\

The hitherto compiled reference all have on thing in common: They consider the data as random variables in the Hilbert space $L^2$. Since in practice most functions are continuous or even smooth developing a framework based on the space of continuous functions is natural, particularly so as the supremum norm offers interpretability that the $L^2$ norm often lacks. The first works in this direction were \cite{dette2020} and \cite{Dette2022b} where breaks in the mean and covariance functions were considered while \cite{bastian2024} consider gradually occurring changes in the mean. \\

While most works in the subject area focus on a setting with \underline{a}t \underline{m}ost \underline{o}ne \underline{c}hange there are also some works that consider detecting and testing for multiple changes in a functional time series. \cite{Chiou2019} propose a dynamic segmentation and backwards algorithm to detect multiple changes in the mean while \cite{rice2022} extend the classical binary segmentation algorithm to the functional setting. \cite{Harris2022} propose the multiple change isolation method for changes in the mean or covariance structure while \cite{Madrid2022} establish validity for wild binary segmentation in a setting that also allows for sparsely observed data. Finally \cite{bastian2023} provide methodology to detect multiple (relevant) changes for data taking values in the Banach space of continuous functions.

\textbf{Main Contributions}
We provide a brief outline of the main contributions of this paper. 
\begin{enumerate}
    \item[i)] We introduce and argue for the $L^1$ norm as a further alternative to the $L^2$ and supremum norms in the context of change point detection and functional data analysis more generally, focusing on its robustness against heavy tailed data and strong performance against alternatives with dense signal. From a technical point of view we provide a new strong invariance principle for time series taking values in the space of integrable functions under very mild assumptions. 
    \item[ii)] We investigate testing Hypotheses of the form \eqref{hypo} as well as their generalizations given by
    \begin{align}
        H_0(\Delta):\norm{\mu^{(1)}-\mu^{(2)}}_1\leq \Delta \quad vs \quad  H_1(\Delta):\norm{\mu^{(1)}-\mu^{(2)}}_1 >\Delta~. 
    \end{align}    
    Testing the latter is substantially more difficult from a mathematical point of view as $H_0(\Delta)$ does not imply stationarity. An additional challenge is provided by the form of the asymptotic null distribution of the test statistic as it depends on certain level sets of $\mu^{(1)}-\mu^{(2)}$. 
    \item[iii)] We provide an in depth empirical comparison of the properties of the three approaches  based on the $L^1, L^2$ and supremum norm, respectively - both for light and heavy tailed functional data, highlighting the robustness of the $L^1$ norm approach. We further establish theoretical results regarding the asymptotic behaviour of the different procedures under the alternative.
    \item[iv)] We develop a power enhancement component for the $L^1$ procedure that safeguards against sparse alternatives, establish its theoretical validity and demonstrate its performance empirically.
\end{enumerate}

\textbf{Detailed Explanation of Contributions}
Hilbert space methodology based on the $L^2$ norm is the predominant way of modeling in functional data analysis, offering a wide variety of powerful tools to solve statistical problems such as change point detection  (see \cite{ramsay2005}, \cite{Horvath2012} for comprehensive reviews). In recent years modeling functional data as random variables in the space of continuous functions equipped with the supremum norm has also gained some attention due to the natural interpretability of the supremum norm in many practical contexts (see \cite{dette2020}, \cite{bastian2023},\cite{Dette2024}). In this paper we propose $L^1$ based methodology as a further alternative that offers easy interpretability in many contexts (area between curves), robustness against heavy tailed data and great performance for non-sparse alternatives. In section \ref{sec6} we demonstrate empirically that the $L^1$ methodology outperforms both competitors for "dense" alternatives. The $C(T)$ methodology performs better for alternatives that are "sparse" and "spiky" when the data is light tailed but suffers greatly both for "sparse" and "dense" alternatives when the data is heavy tailed. $L^2$ methodology also suffers from heavy tailed data, but not as heavily as the $C(T)$ methodology. It performs worse than the $L^1$ method except in the setting of light tails and a "spiky" alternative. We provide an analysis of the asymptotic behaviour under the alternative for all three norms and further consider a specific class of alternatives to theoretically validate the empirical observations. We also provide theoretical guarantees for our methodology, in particular we establish new weak and strong invariance principles for $L^1$ valued data. Additionally our methods can be used to substantially weaken the assumptions in \cite{dette2020} for $C(T)$ valued data.\\

Similar to the supremum norm the $L^1$ has a natural interpretation in many contexts as it can be interpreted as the area between curves. This makes the $L^1$ norm particularly suitable in the context of relevant hypotheses where one tests hypotheses of the form 
\begin{align}
        H_0(\Delta):\norm{\mu^{(1)}-\mu^{(2)}}_1\leq \Delta \quad vs \quad  H_1(\Delta):\norm{\mu^{(1)}-\mu^{(2)}}_1 >\Delta ~.
\end{align}    
Using these hypotheses in place of the classical hypothesis of exact equality is often a more realistic approach in practice, particularly so as it avoids the problem of detecting arbitrarily small changes that are not practically relevant when the sample size is sufficiently large. These advantages come at a cost however, the analysis of this testing problem is vastly more complicated than the classical setting as the data is not stationary under the null. Additionally, similar to the supremum norm, the $L^1$ norm has only a directional instead of a linear derivative. This results in very complicated limiting distributions under the null and we propose three bootstrap procedures to procure the quantiles required for testing.\\

As mentioned in the first paragraph the $L^1$ norm outperforms the $L^2$ and supremum norm for most settings, to alleviate its poorer performance for "sparse" and "spiky" alternatives we propose a power enhancement component for our proposed methodology that safeguards against these alternatives. Similar ideas have been pursued in a high dimensional setting by \cite{Fan2015}. In the context of functional data \cite{Wang2022} have considered power enhancements against alternatives in certain orthogonal complements. Our proposed enhancement component allows the user to specify the maximum size distortion  that is deemed tolerable as a trade-off for increased power against sparse alternatives.\\

\textbf{Structure of the Paper}
In Section \ref{sec2} we introduce $L^1$ valued random variables accompanied by a strong invariance principle.  Section \ref{sec3} contains a discussion of $L^1$ norm based methodology to detect changes in a functional time series and proposes a test based on a bootstrap procedure. Section \ref{sec4} extends this discussion to relevant hypotheses. Section \ref{sec5} compares $L^1, L^2$ and supremum norm based methodologies from a theoretical point of view and derives their distributions under the alternative. Additionally a power enhancement procedure for the $L^1$ methodology is introduced. Finally Section \ref{sec6}
contains an empirical comparison of the procedures from Section \ref{sec5} and also showcases the performance of the presented methods on real and synthetic data. Proofs can be found in Section \ref{sec7}.

\section{$L^1$ valued random variables}
\label{sec2}
 In this section we provide basic definitions and some statements about central limit theorems and strong invariance principles for random variables with values in the Banach space of integrable functions. We always equip the space
 \begin{align}
     L^1[0,1]=\Big\{ [f] \Big| \int_0^1f(x)dx<\infty, f \in [f]\Big\}
 \end{align}
 with the norm
 \begin{align}
     \norm{f}_1=\int_0^1|f(x)|dx
 \end{align}
 where here and in the following we denote an equivalence class $[f]$ simply by $f$. We also abbreviate $L^1[0,1]$ by $L^1$. We denote the Borel sigma field generated by the open sets with respect to this norm by $\mathcal{B}$ and measurability of random variables taking values in $L^1[0,1]$ is to be understood with respect to this sigma field. Expectations of random variables taking values in $L^1$ are always to be understood as Bochner Integrals, in particular we have for any $X \in L^1$ that 
 \begin{align}
     \E[X] \text{ exists } \iff \E[\norm{X}_1]<\infty~.
 \end{align}
 A centered random variable $X \in L^1$ is said to be Gaussian if and only we have
 \begin{align}
     &(l_1(X), ... , l_k(X))\sim \mathcal{N}(0,\Sigma)         
 \end{align}
 for any finite collection of continuous linear functionals $l_1,...,l_k$ on $L^1$. Here the covariance matrix $\Sigma$ is given by $\Sigma_{ij}=\E[l_i(X)l_j(X)]$. Note that linear functionals on $L^1$ are given by integration against essentially bounded functions. To any such variable (and more generally any random variable for which $\E[X^2]$ exists) one can associate a covariance operator
 \begin{align}
     T_X(l,l')=\E[l(X)l'(X)]
 \end{align}
 that operates on $(L^1)^*\times (L^1)^*=L^\infty \times L^\infty$. We henceforth denote by $\mathcal{N}(\mu,T)$ the Gaussian random variable on $L^1$ with mean function $\mu$ and covariance operator $T$, provided that it exists. Not all covariance operators are eligible for Gaussian random variables, see the discussion on pages 260 and 261 in \cite{ledoux1991} - random variables with covariance operators $T$ for which $\mathcal{N}(\mu,T)$ exists are called pregaussian and we denote for a random variable $X$ with covariance $T$ the associated Gaussian by $G_X$. \\

For infinite dimensional Banach spaces the question of whether a CLT holds for random variables with finite second moments is significantly more complicated than in the finite dimensional case. A concerted effort to resolve these issues has resulted in the classification of Banach spaces into type 2 and cotype 2 spaces. In the case of an independent sequence these notions yield satisfying answers (see \cite{ledoux1991}), in particular $L^1$ is a Banach space of Type 1 and cotype 2 so that any iid sequence of  pregaussian distributions enjoys a CLT. Putting aside that allowing for dependence complicates the issue (see \cite{Giraudo2014} for a stationary weakly dependent sequence of Hilbert space valued variables that does not satisfy a CLT) we note that a CLT is not enough for our purposes, deriving limit theorems for CUSUM statistics typically necessitates weak convergence results for the partial sum process, i.e. weak or strong invariance principles which do not hold in the same generality even for Hilbert spaces (see e.g. \cite{Dehling1983b}). The situation is even more complicated in this case and while there are some results available (confer \cite{Dehling1983}, \cite{Samur1987}) they do not provide ready to use results for the space $L^1$. In order to obtain a strong invariance principle  for $L^1$ valued weakly dependent sequences we impose the following mild assumptions.
\begin{assumption}
The random variables
\begin{align}
    X_{n,i}=\mu_{n,i}+\epsilon_i
\end{align}
form a triangular array of $L^1[0,1]$ valued random variables where $\epsilon_i$ is a mean zero stationary sequence.
    \begin{enumerate}
        \item[A1)] We have 
            \begin{align}
                \E[\norm{\epsilon_i}_1^{2+\delta}]<\infty
            \end{align}
            for some $\delta>0$. 
        \item[A2)] The sequence $(\epsilon_i)_{i \in \N}$ is $\beta$-mixing with coefficients $\beta(k)$ that satisfy
        \begin{align}
            \beta(k) \leq k^{-(1+\epsilon)(1+2/\delta)}, \quad  \sum_{k=1}^\infty k^{1/(1/2-\tau)}\beta(k)^{\delta/(2+\delta)}<\infty
        \end{align}
        for some $\epsilon>0$ and some $\tau \in (1/(2+2\delta),1/2)$.  We denote the long run covariance operator of $\epsilon_i$ by $C$, provided that it exists. 
        \item[A3)] We have that
        \begin{align}
            \int_0^1 \sqrt{\E[\epsilon_i(t)^2]}dt<\infty
        \end{align}
        \item[A4)] We have for some $\alpha>0$ and $C_1>0$ that
        \begin{align}
            \E[\norm{G_{\epsilon_1}(\cdot)-G_{\epsilon_1}(\cdot+y)}_1^2]^{1/2}\leq C_1y^\alpha
        \end{align}
        
    \end{enumerate}
\end{assumption}
Before we state the main result of this section we note that for any Gaussian distribution $\mathcal{N}(0,T)$ on $L^1$ one may define a $L^1$-valued Brownian motion $(B(t))_{t\geq 0}$ that is characterized by $B(1)\sim \mathcal{N}(0,T)$. 
\begin{theorem}
\label{Thm:L1Inv}
   Suppose that assumptions A1) to A4) hold. We then have that, on a possibly larger probability space, there exists a $L^1$ valued Brownian motion $B$ with covariance operator $C$ such that
   \begin{align}
       \norm{\sum_{j \leq t}\epsilon_j - B(t)}_1 \leq t^{1/2-\gamma}
   \end{align}
   for some $\gamma>0$.
\end{theorem}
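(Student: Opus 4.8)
The plan is to reduce the infinite-dimensional problem to a finite-dimensional strong approximation via a dyadic step-function projection, and to control the resulting truncation error by means of assumptions A3 and A4. For $m\in\N$ let $\Pi_m$ denote the conditional-expectation projection of $L^1[0,1]$ onto the $\sigma$-field generated by the dyadic intervals $I_{m,j}=[(j-1)2^{-m},j2^{-m})$, $j=1,\dots,2^m$; thus $\Pi_m f=\sum_j\big(2^m\int_{I_{m,j}}f\big)\1_{I_{m,j}}$, $\Pi_m$ has norm one on $L^1$, and $\Pi_m\epsilon_i$ may be identified with an $\R^{2^m}$-valued random vector. Before anything else I would verify that the long-run covariance operator $C$ exists and is pregaussian, so that the $L^1$-valued Brownian motion $B$ with $B(1)\sim\mathcal N(0,C)$ is well defined: A3 is precisely the classical sufficient condition $\int_0^1\sqrt{\E[Z(t)^2]}\,dt<\infty$ for a covariance on $L^1$ to be pregaussian, and the $\beta$-mixing summability in A2 together with the moment bound A1 transfers this finiteness from $\epsilon_1$ to $C$ via a covariance inequality. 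Then one writes, on a suitably enlarged probability space,
\begin{align}
    \sum_{j\leq t}\epsilon_j - B(t) = \underbrace{\sum_{j\leq t}(\epsilon_j-\Pi_m\epsilon_j)}_{\text{(I)}} + \underbrace{\Big(\Pi_m\sum_{j\leq t}\epsilon_j - \Pi_m B(t)\Big)}_{\text{(II)}} + \underbrace{\big(\Pi_m B(t)-B(t)\big)}_{\text{(III)}},
\end{align}
and estimates the three terms separately, with $m=m(t)\to\infty$ chosen at the end.

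\textbf{The finite-dimensional term (II).}
The sequence $(\Pi_m\epsilon_i)_i$ is stationary, centered, has $(2+\delta)$ moments by A1, and is $\beta$-mixing with the same coefficients $\beta(k)$ as $(\epsilon_i)_i$. Using Berbee's coupling lemma one replaces it, up to an error controlled by $\sum_k\beta(k)$, by a sequence built from independent blocks, and then applies a multivariate Komlós--Major--Tusnády / Einmahl-type strong approximation for sums of independent $\R^{2^m}$-valued vectors (this is where the precise polynomial mixing rate and the summability condition involving $\tau$ in A2 are used: they guarantee a strong approximation rate $o(t^{1/2-\kappa})$ for some $\kappa=\kappa(\delta,\tau)>0$, with an implied constant growing at most polynomially in the dimension $2^m$). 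One then arranges, by a nested coupling construction along $m$, that the approximating $\R^{2^m}$-valued Brownian motions are the dyadic projections $\Pi_m B$ of a single $L^1$-valued Brownian motion $B$ with covariance $C$ (here one uses linearity and continuity of $\Pi_m$ and $\Pi_m C\Pi_m\to C$). Passing from the Euclidean norm on step functions to $\norm{\cdot}_1$ costs at most a factor polynomial in $2^m$, so $\E\big[\sup_{s\le t}\norm{\Pi_m\sum_{j\le s}\epsilon_j-\Pi_m B(s)}_1\big]$ is $o(t^{1/2-\kappa})$ times a polynomial in $2^m$.

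\textbf{The high-frequency terms (I) and (III).}
By Minkowski's integral inequality, $\E[\norm{Z}_1^2]^{1/2}\le\int_0^1\sqrt{\E[Z(t)^2]}\,dt$ for any $Z\in L^1$, and for a Gaussian $Z$ the reverse inequality also holds up to a constant by Kahane--Khintchine; since $G_{\epsilon_1}$ has the same covariance function as $\epsilon_1$, assumption A4 is therefore equivalent to the mean-square Hölder modulus $\int_0^1\sqrt{\E[(\epsilon_1(t)-\epsilon_1(t+y))^2]}\,dt\lesssim y^\alpha$ (and, after the covariance-inequality argument used for pregaussianity of $C$, a possibly weaker such modulus for the Gaussian associated with $C$). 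Writing $\Pi_m\epsilon_1(t)-\epsilon_1(t)$ as a local average over $I_{m,j(t)}$ and applying Minkowski's inequality, one obtains $\int_0^1\sqrt{\E[(\epsilon_1(t)-\Pi_m\epsilon_1(t))^2]}\,dt\lesssim 2^{-m\alpha}$. Combining this with a Rosenthal/Marcinkiewicz-type maximal inequality for $\beta$-mixing partial sums (so that $\E[(\sum_{j\le t}Y_j)^2]\lesssim t\,\E[Y_1^2]$, uniformly in the summand) yields $\E\big[\sup_{s\le t}\norm{\sum_{j\le s}(\epsilon_j-\Pi_m\epsilon_j)}_1\big]\lesssim \sqrt{t}\,2^{-m\alpha}$ up to logarithmic factors; the Gaussian term (III) is handled identically using $\Pi_m B(t)-B(t)\stackrel{d}{=}\sqrt{t}\,(\Pi_m G-G)$ with $G\sim\mathcal N(0,C)$, giving $\E\norm{\Pi_m B(t)-B(t)}_1\lesssim\sqrt t\,2^{-m\alpha}$.

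\textbf{Balancing and conclusion.}
Choosing $m=m(t)$ with $2^{m(t)}\asymp t^{\eta}$ for a sufficiently small $\eta>0$ makes terms (I) and (III) of order $t^{1/2-\eta\alpha}$ (up to logs) while keeping the polynomial-in-$2^m$ blow-up of term (II) below $t^{1/2-\kappa/2}$. Taking $\gamma>0$ smaller than $\min(\eta\alpha,\kappa/2)$, and upgrading the expectation bounds to an almost-sure statement by a standard blocking argument along the geometric subsequence $t_k=2^k$ together with Borel--Cantelli (using the maximal inequalities above to fill the gaps), gives $\norm{\sum_{j\le t}\epsilon_j-B(t)}_1\le t^{1/2-\gamma}$ for all large $t$ almost surely. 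I expect the main obstacle to be the finite-dimensional step: securing a multivariate strong approximation under $\beta$-mixing whose rate degrades only controllably (polynomially) in the dimension, so that the truncation level $2^{m(t)}$ may be let grow with $t$ — reconciling the mixing structure (which forces blocking, hence a loss in effective sample size) with the moment constraint A1 and the rate/summability constraints A2 is the delicate part, while A4 is exactly what then makes the trade-off between truncation level and approximation rate feasible.
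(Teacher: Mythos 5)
Your overall architecture --- averaging (conditional-expectation) projections onto step functions, control of the truncation error via A3)/A4), and a finite-dimensional strong approximation for the projected sums --- is the right skeleton, and your $\Pi_m$ coincides with the projections $P_N$ used in the paper (with $N=2^m$). The crucial difference is that the paper does not carry out the finite-dimensional coupling by hand: it invokes Theorem 3 of \cite{Dehling1983}, a Banach-space strong invariance principle for mixing sequences whose only nontrivial hypothesis is precisely a quantitative version of your terms (I)/(III), namely $\E\big[n\norm{S_n-P_NS_n}_1^2\big]\lesssim N^{-c}$. This outsources exactly the two points you yourself flag as the main obstacles: a multivariate KMT/Einmahl-type approximation under $\beta$-mixing whose rate degrades only controllably in the dimension, and the nested coupling that makes the $\R^{2^m}$-valued Brownian motions consistent projections of a single $L^1$-valued Brownian motion with covariance $C$. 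As written, your proposal does not close either of these --- ``arranges, by a nested coupling construction along $m$'' and ``an implied constant growing at most polynomially in the dimension'' are assertions, not arguments, and they are the entire content of the hard step. So what you have is a plausible plan for reproving Dehling's theorem rather than a complete proof; the efficient route is to verify his condition 1.19 and cite the result.

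There is also a concrete technical gap in your treatment of term (I). You propose to bound $\E\big[\big(\sum_{j\le s}(\epsilon_j-\Pi_m\epsilon_j)(t)\big)^2\big]\lesssim s\,\E\big[(\epsilon_1-\Pi_m\epsilon_1)(t)^2\big]$ pointwise in $t$ by a mixing covariance inequality and then integrate. The Davydov/Rio-type covariance inequality for $\beta$-mixing requires $2+\delta$ moments of the real-valued summands, i.e.\ $\E[|\epsilon_1(t)|^{2+\delta}]<\infty$ for the fixed $t$; but A1) only controls $\E[\norm{\epsilon_1}_1^{2+\delta}]$, and point evaluation is not a bounded functional on $L^1$, so this pointwise moment is simply not available (A3) gives only second moments almost everywhere). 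The paper circumvents this by applying the covariance inequality only to bounded linear functionals $g\in L^\infty$ (for which $|g(\epsilon_1)|\le\norm{g}_\infty\norm{\epsilon_1}_1$ inherits the moments of A1)), then comparing covariance operators via Lemma \ref{Lemma:Conc}, using cotype 2 of $L^1$ (Lemma \ref{Lemma:Cotype}) to pass from the associated Gaussian back to the partial sums, and controlling the supremum over shifts $|y|\le 2/N$ by the chaining bound of Theorem 2.2.4 in \cite{Wellner1996} together with A4) and Lemma \ref{Lemma:Shift}. If you wish to keep your pointwise route you would need to strengthen A1) to a pointwise moment condition; otherwise you should follow the Gaussian-comparison route, which is also why A4) is stated for $G_{\epsilon_1}$ rather than for $\epsilon_1$ itself.
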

\textbf{Discussion of the Assumptions}
Assumptions in the vein of A1) and A2), i.e. sufficient moments and decaying dependence coefficients, are standard (see for instance \cite{Sharipov2016}, \cite{Aue2017}, \cite{dette2020}) and ensure that sample means of dependent variables concentrate in a $n^{-1/2}$ neighborhood of their expectations. Assumption 3 is a necessary and sufficient condition for an $L^1$ valued sequence of iid random variables to satisfy a CLT and is therefore the weakest possible. Regarding assumption A4) we note that by the Kolmogorov-Riesz Theorem the modulus of continuity of $\norm{G_{\epsilon_1}(\cdot)-G_{\epsilon_1}(\cdot+y)}_1^2$ is finite almost surely. A4) is therefore a mild quantitative tightness requirement which one may, for instance, verify by checking if the covariance function 
\begin{align}
    K(s,t)= \E[G_{\epsilon_1}(s)G_{\epsilon_1}(t)]=\E[\epsilon_1(s)\epsilon_1(t)]
\end{align}
is (piecewise) smooth (this function is well defined by Theorem 10.12 in \cite{Janson2012}). One can also further weaken this assumption, in that case we will only obtain a weak instead of a strong invariance principle (which still suffices for all the other results in this paper).

\begin{remark}
{\rm
\begin{itemize}\leavevmode
    \item[i)]
    In the proof of Theorem \ref{Thm:PowImp} we also establish a strong invariance principle for $\beta$-mixing random variables in the space $C([0,1])$, this improves on the results in \cite{dette2020} where a weak invariance principle is provided for $\phi$-mixing data. It can also be easily generalized to any totally bounded space.
    \item[ii)] In the proofs section we also provide a weak invariance principle for a bootstrap version of the sequential sum process. The proof can be adapted to also work for the original data and goes through with the weaker assumption of $\alpha$-mixing. All following results in this paper only require a weak invariance principle and are therefore also valid for this weaker dependence concept!
    \item[iii)] While the strong invariance principle we just presented relies on ($\beta$-)mixing as a dependence concept the other results of this paper are also valid for other dependence concepts such as $L^p$-$m$-$approximability$.
    \item[iv)] Functional time series fulfilling the mixing assumptions in A2) include Markov Chains and AR processes (see Section 4 in \cite{Lu2022}).
    \item[v)] The restriction to functions on the interval $[0,1]$ is merely a matter of convenience, any rescaling of the interval will lead to analogous results to those we present in this paper. It is also straightforward to extend the results to multivariate functions on products of intervals.
\end{itemize}
    }
\end{remark}

\section{Change Point Detection: Classical case}
\label{sec3}
In this section we consider a triangular array
\begin{align}
    X_{n,i}=\mu_{n,i}+\epsilon_i \quad i=1,...,n
\end{align}
of $L^1$ valued random variables and write $\mu_i, X_i$ instead of $\mu_{n,i}, X_{n,i}$ for easier reading. We assume that there exists an index $k^*=\lfloor ns^*\rfloor$, where $s^* \in (0,1)$, for which it holds that
\begin{align}
\label{setting}
    \mu^{(1)}=\mu_1=...=\mu_{k^*}, \quad \mu^{(2)}=\mu_{k^*+1}=...=\mu_n
\end{align}
and want to construct an asymptotic level $\alpha$ test for the hypotheses
\begin{align}
\label{classhyp}
    H_0:\mu^{(1)}=\mu^{(2)} \quad vs \quad H_1:\mu^{(1)} \neq \mu^{(2)}~.
\end{align}
Here and in the remainder of this paper we assume that the full trajectories of the observations are available. The results we present can be extended in a straightforward manner to time series that are observed on a sufficiently dense (random) grid as long as one can calculate the $L^1$ norm of the resulting partial sum process with error of order $o(n^{-1/2})$.\\

We now return to constructing a test for the hypotheses \eqref{classhyp} and to that end we define the cusum process by
\begin{align}
    \mathbb{U}_n(s)=\frac{1}{n}\Big(\sum_{i=1}^{ns}X_i-s\sum_{i=1}^nX_i\Big)=S_n(s)-sS_n(1)~
\end{align}
where the sequential process $S_n$ is defined in the obvious way. 
Here and in the remainder of the paper sums with non-integer bounds are understood to be linearly interpolated. $\U_n(s)$ therefore takes values in $C([0,1],L^1)$ equipped with the norm $\norm{f}_{\infty,1}:=\sup_{s \in [0,1]}\norm{f(s)(\cdot)}_1$. Motivated by the fact that the norm of $\U_n(s)$ will be large at the true (rescaled) breakpoint our test statistic and a change point estimator are given by
\begin{align}
\label{pb1}
    \hat T_n&=\sqrt{n}\max_{s \in [0,1]}\norm{\U_n(s)}_1\\
    \hat k&=n\arg \max_{s \in [0,1]} \norm{\U_n(s)}_1=:n\hat s~,
\end{align}
respectively. By the rescaling properties of Brownian motion and Theorem \ref{Thm:L1Inv} we obtain the asymptotic distributions of $\U_n$ and $\hat T_n$.
\begin{corollary}
    Under assumptions A1) to A4) and when $H_0$ holds we have
    \begin{align}
        \{\sqrt{n}\U_n(s)\}_{s \in [0,1]} \overset{d}{\rightarrow} \{\W(s)-s\W(1)\}_{s \in [0,1]}
    \end{align}
    where $\W$ is an $L^1$ valued Brownian motion with covariance $C$. In particular it follows from the continuous mapping theorem that
    \begin{align}
        \hat T_n \rightarrow \norm{\W(s)-s\W(1)}_{\infty,1}
    \end{align}
\end{corollary}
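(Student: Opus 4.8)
The plan is to reduce the cusum process under $H_0$ to a functional of the partial sums of the error sequence, to transfer the strong invariance principle of Theorem~\ref{Thm:L1Inv} through the scale‑invariant linear map $g\mapsto\big(g(s)-s\,g(1)\big)_{s\in[0,1]}$, and finally to apply the continuous mapping theorem. First I would note that under $H_0$ all $\mu_i$ coincide, say $\mu_i\equiv\mu$, so the deterministic part of $\U_n$ cancels exactly (and the cancellation is unaffected by the linear interpolation of non‑integer sums), giving
\[
\sqrt n\,\U_n(s)=\frac{1}{\sqrt n}\Big(\sum_{i=1}^{ns}\epsilon_i-s\sum_{i=1}^{n}\epsilon_i\Big)=\frac{1}{\sqrt n}\big(S(ns)-s\,S(n)\big),\qquad S(t):=\sum_{j\le t}\epsilon_j .
\]
Thus the statement is purely about the fluctuations of the partial sum process of $(\epsilon_i)$ and carries no dependence on the unknown common mean.

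Next I would invoke Theorem~\ref{Thm:L1Inv}: on a possibly enlarged probability space there is an $L^1$‑valued Brownian motion $B$ with covariance operator $C$ such that $\norm{S(t)-B(t)}_1\le t^{1/2-\gamma}$ for all $t$ (up to a harmless additive term for small $t$, and with the interpolation of $S$ already built into this bound; if one prefers, the interpolation remainder $n^{-1/2}\max_{i\le n}\norm{\epsilon_i}_1$ tends to $0$ in probability by A1) and a union bound). Dividing by $\sqrt n$, the approximation error is $\sup_{s\in[0,1]}n^{-1/2}(ns)^{1/2-\gamma}=O(n^{-\gamma})$, hence uniformly negligible in the norm $\norm{\cdot}_{\infty,1}$. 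Combining this with the Brownian scaling identity $\{n^{-1/2}B(nt)\}_{t\in[0,1]}\overset{d}{=}\{\W(t)\}_{t\in[0,1]}$ in $C([0,1],L^1)$ (valid since an $L^1$‑valued Brownian motion is characterised by the law of its value at time $1$, and $n^{-1/2}B(n)\sim\mathcal N(0,C)$), we obtain
\[
\sqrt n\,\U_n(\cdot)=\Big\{n^{-1/2}\big(B(n\,\cdot)-(\cdot)\,B(n)\big)\Big\}+o_{\p}(1),
\]
where the first term has, for every $n$, exactly the law of $\{\W(s)-s\W(1)\}_{s\in[0,1]}$ and the remainder is $o_{\p}(1)$ in $\norm{\cdot}_{\infty,1}$; by the converging‑together lemma this yields $\{\sqrt n\,\U_n(s)\}_{s\in[0,1]}\overset{d}{\rightarrow}\{\W(s)-s\W(1)\}_{s\in[0,1]}$ in $\big(C([0,1],L^1),\norm{\cdot}_{\infty,1}\big)$.

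Finally, since the map $f\mapsto\norm{f}_{\infty,1}=\sup_{s\in[0,1]}\norm{f(s)(\cdot)}_1$ is $1$‑Lipschitz, hence continuous, on $\big(C([0,1],L^1),\norm{\cdot}_{\infty,1}\big)$, the continuous mapping theorem gives $\hat T_n=\sqrt n\max_{s\in[0,1]}\norm{\U_n(s)}_1\overset{d}{\rightarrow}\norm{\W(s)-s\W(1)}_{\infty,1}$. I do not expect a genuine obstacle at this stage: the only points requiring a little care are the uniform‑in‑$s$ negligibility of the rescaled strong‑approximation error and of the interpolation remainder, both routine; the substantive analytic work has already been carried out in Theorem~\ref{Thm:L1Inv}.
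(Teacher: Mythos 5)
Your proposal is correct and follows exactly the route the paper intends: the corollary is stated as an immediate consequence of Theorem \ref{Thm:L1Inv} ("by the rescaling properties of Brownian motion"), i.e.\ cancellation of the common mean under $H_0$, transfer of the strong approximation to the rescaled cusum map, Brownian scaling, and the continuous mapping theorem. The paper gives no separate proof, and your write-up supplies precisely the routine details (uniform negligibility of the $O(n^{-\gamma})$ approximation error and of the interpolation remainder) that the paper leaves implicit.
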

Regarding the change point estimator $\hat k$ we obtain the following result
\begin{theorem}
    \label{Thm:CPCon}
    Grant Assumptions $(A1)$ and $(A2)$ and assume that $\mu^{(1)}\neq \mu^{(2)}$. We then have for $\hat s$ defined in \eqref{pb1} that 
    \begin{align}
        |\hat s - s^*| &=O_\p(n^{-1})\\
        |\hat k - k^*| &=O_\p(1)
    \end{align}
\end{theorem}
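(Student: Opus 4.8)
The plan is to follow the classical template for rate-optimality of argmax-type change point estimators, adapted to the $L^1$ setting. First I would establish a deterministic "population" identity: define $g(s) = \norm{m(s)}_1$ where $m(s) = \lim \E[\U_n(s)]$ is the (rescaled, piecewise linear in $s$) population CUSUM, explicitly $m(s) = s(1-s^*)(\mu^{(2)}-\mu^{(1)})$ for $s \le s^*$ and $m(s) = s^*(1-s)(\mu^{(2)}-\mu^{(1)})$ for $s \ge s^*$. Since $\mu^{(1)} \neq \mu^{(2)}$, we have $\norm{\mu^{(2)}-\mu^{(1)}}_1 =: c > 0$, so $g(s) = c \cdot (s \wedge s^*)(1 - s \vee s^*)$ is a continuous, piecewise-linear function with a unique maximum at $s = s^*$ and a genuine kink there, giving the crucial curvature bound $g(s^*) - g(s) \ge \kappa |s - s^*|$ for some $\kappa = \kappa(c, s^*) > 0$ and all $s \in [0,1]$. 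This linear (rather than quadratic) separation is what ultimately yields the $n^{-1}$ rate rather than $n^{-2/3}$.

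Next I would control the stochastic fluctuation. Write $\sqrt{n}\,\U_n(s) = \sqrt{n}\,m_n(s) + E_n(s)$, where $m_n(s) = \E[\U_n(s)]$ and $E_n(s) = \sqrt{n}(\U_n(s) - \E[\U_n(s)])$ is the centered sequential-sum CUSUM. By the strong invariance principle of Theorem \ref{Thm:L1Inv} (applied to the partial sums of $\epsilon_i$, which gives control of $\norm{S_n^\epsilon(t) - B(t)}_1$ hence of the CUSUM), $E_n$ is tight in $C([0,1], L^1)$ with $\E[\sup_s \norm{E_n(s)}_1] = O(1)$; in fact one gets the stronger increment bound $\E[\sup_{|s-s^*| \le \rho} \norm{E_n(s) - E_n(s^*)}_1^2] \lesssim \rho$ from the Brownian approximation plus the modulus-of-continuity control in A4. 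Using the triangle inequality $\big|\,\norm{\sqrt n \U_n(s)}_1 - \sqrt n\, g(s)\,\big| \le \norm{E_n(s)}_1$, and the fact that $\hat s$ maximizes $\norm{\U_n(\cdot)}_1$ while $s^*$ maximizes $g$, a standard basic-inequality argument gives
\begin{align}
\sqrt n\,\kappa\,|\hat s - s^*| \le \sqrt n\big(g(s^*) - g(\hat s)\big) \le \norm{E_n(\hat s)}_1 + \norm{E_n(s^*)}_1 \le 2\sup_{s}\norm{E_n(s)}_1 = O_\p(1),
\end{align}
which already yields $|\hat s - s^*| = O_\p(n^{-1/2})$ — a preliminary rate.

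To bootstrap this to the sharp $n^{-1}$ rate I would run a peeling (chaining over dyadic shells) argument: for the event $\{|\hat s - s^*| \in [2^j/n, 2^{j+1}/n]\}$ to occur, one needs $\sqrt n (g(s^*) - g(s)) \le \norm{E_n(s) - E_n(s^*)}_1$ for some $s$ in that shell, i.e. roughly $\kappa\, 2^j \sqrt n / n \le \sup_{|s - s^*| \le 2^{j+1}/n} \norm{E_n(s) - E_n(s^*)}_1$; by the increment bound the right side has $L^1$-mean of order $\sqrt{2^{j+1}/n}$, so Markov's inequality gives probability $\lesssim \sqrt{2^{j+1}/n}\big/(\kappa 2^j/\sqrt n) = \sqrt{2}\,2^{-j/2}/\kappa$, and summing over $j$ with $2^j/n \le \delta$ shows $\p(|\hat s - s^*| > M/n)$ can be made arbitrarily small by choosing $M$ large (the preliminary $O_\p(n^{-1/2})$ rate confines us to the regime where the curvature bound with constant $\kappa$ applies and the peeling tail converges). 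This gives $|\hat s - s^*| = O_\p(n^{-1})$, and since $\hat k = n\hat s$ and $k^* = \lfloor n s^* \rfloor$, immediately $|\hat k - k^*| = O_\p(1)$.

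The main obstacle is the fluctuation control in the $L^1$ norm of the CUSUM increments near $s^*$ — specifically showing $\E[\sup_{|s - s^*| \le \rho}\norm{E_n(s) - E_n(s^*)}_1^2] \lesssim \rho$ uniformly in $n$. For $\R$-valued or Hilbert-valued CUSUMs this follows from Doob/Kolmogorov maximal inequalities; here one must pass through the Gaussian approximation (Theorem \ref{Thm:L1Inv}) to reduce to the analogous modulus-of-continuity estimate for the $L^1$-valued Brownian motion $B$, where A4 supplies the needed quantitative tightness. Care is also needed because only $(A1)$ and $(A2)$ are assumed in this theorem, not $(A3)$–$(A4)$; so I would instead derive the increment bound directly from $\beta$-mixing moment inequalities (e.g. a Rosenthal-type inequality for $\beta$-mixing sequences applied coordinatewise and integrated over $t \in [0,1]$ via A1's $2+\delta$ moment and Fubini), avoiding the Gaussian detour entirely — this keeps the proof within the stated hypotheses and is the one genuinely technical ingredient.
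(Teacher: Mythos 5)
Your overall strategy (kink of the population CUSUM $g(s)=c\,(s\wedge s^*)(1-s\vee s^*)$, basic inequality, then peeling over dyadic shells) is the classical route and is not what the paper does: the paper invokes Corollary 2 of Hariz et al.\ by building a seminorm $N_c(\nu)=\sup_{s,t}|\int \phi_{s,t,c}\,d\nu|$ from the \emph{truncated} integral functionals $\phi_{s,t,c}(g)=\int_s^t g(x)\wedge c\,dx$ and verifying a bracketing entropy condition via Theorem 2.7.11 of van der Vaart and Wellner. The truncation is not cosmetic: it reduces everything to an empirical process indexed by uniformly bounded functionals, which is why A1) and A2) suffice. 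Your plan, by contrast, hinges on the increment bound $\E[\sup_{|s-s^*|\le\rho}\norm{E_n(s)-E_n(s^*)}_1^2]\lesssim\rho$ (and already on $\sup_s\norm{E_n(s)}_1=O_\p(1)$ for the preliminary rate), and this is where the genuine gap lies. $L^1$ is of cotype 2 but \emph{not} of type 2, so $\E[\norm{\sum_{i\in I}\epsilon_i}_1^2]\lesssim |I|$ does not follow from $\E[\norm{\epsilon_1}_1^{2+\delta}]<\infty$ and mixing alone. Your proposed fix --- a coordinatewise Rosenthal inequality integrated over $t$ via Fubini/Minkowski --- produces the bound $\E[\norm{\sum_{i\in I}\epsilon_i}_1^2]^{1/2}\le \sum_{i}\big(\int_0^1\sqrt{\E[\epsilon_i(t)^2]}\,dt\big)$-type expressions, i.e.\ it requires precisely $\int_0^1\sqrt{\E[\epsilon_1(t)^2]}\,dt<\infty$, which is Assumption A3), not A1). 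So the claim that this ``keeps the proof within the stated hypotheses'' is incorrect; under A1)--A2) only, your fluctuation control is unavailable, and under A1)--A4) you are essentially re-deriving consequences of Theorem \ref{Thm:L1Inv}.

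There is a second, smaller gap in the peeling step. The inequality you use there, namely that $\{|\hat s-s^*|\ \text{in a shell}\}$ forces $\sqrt n\,(g(s^*)-g(s))\le\norm{E_n(s)-E_n(s^*)}_1$ for some $s$ in that shell, does not follow from the triangle inequality: from $\norm{\U_n(\hat s)}_1\ge\norm{\U_n(s^*)}_1$ one only gets the non-localized bound with $\norm{E_n(\hat s)}_1+\norm{E_n(s^*)}_1$ on the right, which is $O_\p(1)$ and cannot beat the $n^{-1/2}$ rate. The localized version is true here, but only because of the special structure $\E[\U_n(s)]=h(s)\,\Phi$ with $h(s)=s\wedge s^*-ss^*\ge 0$, so that $\E[\U_n(s)]=\lambda(s)\,\E[\U_n(s^*)]$ with $\lambda(s)=h(s)/h(s^*)\in[0,1]$; writing $\norm{\lambda b+e_1}_1\le\lambda\norm{b+e_2}_1+(1-\lambda)\norm{e_2}_1+\norm{e_1-e_2}_1$ and rearranging yields $(1-\lambda(\hat s))\,(g(s^*)-O_\p(n^{-1/2}))\le n^{-1/2}\norm{E_n(\hat s)-E_n(s^*)}_1$, which is the inequality your peeling actually needs. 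This collinearity argument must be supplied explicitly; without it the shell-by-shell probability bound is unjustified.
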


As the limiting distribution of $\hat T_n$ is data dependent we propose a bootstrap procedure to access its quantiles. To that end define
\begin{align}
    \hat \mu^{(1)}&=\frac{1}{\hat k}\sum_{i=1}^{\hat k}X_i\\
    \hat \mu^{(2)}&=\frac{1}{n-\hat k}\sum_{i=\hat k+1}^nX_i\\
    \hat Y_{n,i}&=X_{n,i}-(\hat \mu^{(2)}-\hat \mu^{(1)})\1\{i \geq \hat k\}
\end{align}
and let $(\nu_i)_{i \in \N}$ be a sequence of iid standard normal random variables. Let $l=l_n$ be a sequence of natural numbers satisfying $l_n \simeq n^\beta$ where $\beta \in (1/5,2/7)$ and
    \begin{align}   
    \label{bandwidth}
      \frac{\beta(2+\delta)+1}{2+2\delta }<\tau < 1/2~.
    \end{align}
The bootstrap version of the partial sum process is then given by
\begin{align}
\label{bseqdef}
  S_n^*(s)=\frac{1}{n}\sum_{i=1}^{ns}\frac{\nu_i}{\sqrt{l}}\Big(\sum_{k=0}^{i+l-1}\hat Y_{n,i+k}-\frac{l}{n}\sum_{j=1}^n \hat Y_{n,j}\Big)
\end{align}
from which we then obtain the bootstrap version of $\U_n$ and $\hat T_n$ by
\begin{align}
\label{bootdefclassic}
    \U_n^*(s)&=S_n^*(s)-sS_n^*(1)\\
    \hat T_n^*&=\sqrt{n}\max_{s \in [0,1]}\norm{\U_n^*(s)}_1~,
\end{align}
respectively. Here we implicitly assume that  for $ s \in [(n-l+1)/n,1]$ we have
\begin{align}
    S_n^*(s)=S_n^*((n-l)/n)~,
\end{align}
and we shall proceed in the same way for all other such gaps in the domain of the functions we define. We denote the $(1-\alpha)$-quantile of $\hat T_n^*$ by $q^*_{1-\alpha}$ and reject $H_0$ whenever
\begin{align}
    \label{btestclassic}
    \hat T_n>q_{1-\alpha}^*~.
\end{align}
We record the asymptotic properties of this test as follows
\begin{theorem}
\label{Thm:BootCons}
    Grant assumptions A1) to A4) and assume that \eqref{bandwidth} holds. 
    Then the test \eqref{btestclassic} 
    \begin{enumerate}
        \item has asymptotic level $\alpha$, i.e. when $H_0$ is true we have
        \begin{align}
            \lim_n \p(\hat T_n>q_{1-\alpha}^*)=\alpha
        \end{align}
        \item is asymptotically consistent, i.e. when $H_1$ is true we have
        \begin{align}
            \lim_n \p(\hat T_n>q_{1-\alpha}^*)=1
        \end{align}
    \end{enumerate}
\end{theorem}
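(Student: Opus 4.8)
Both assertions follow once we control the conditional law $\mathcal{L}(\hat T_n^*\mid X_{n,1},\dots,X_{n,n})$. The plan is to show that this conditional law converges, in probability, to the law of $L:=\norm{\W(s)-s\W(1)}_{\infty,1}$ \emph{irrespective of whether} $H_0$ or $H_1$ holds, where $\W$ is the $L^1$-valued Brownian motion with covariance $C$ from the Corollary. Under $H_0$ this matches the limit of $\hat T_n$ and gives the level; under $H_1$ it shows $\hat T_n^*=O_\p(1)$, which together with $\hat T_n\to\infty$ gives consistency.

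\textbf{Step 1: a conditional invariance principle for $S_n^*$.} The core is to prove that, conditionally on the data and in probability, the bootstrap sequential process \eqref{bseqdef} satisfies $S_n^*\overset{d}{\to}\W$ in $(C([0,1],L^1),\norm{\cdot}_{\infty,1})$. It is convenient to first replace $\hat Y_{n,i}$ by the centred errors $\epsilon_i-\bar\epsilon_n$ and analyse the resulting ``oracle'' process $\tilde S_n^*$. Conditionally on the data, $\tilde S_n^*(s)$ is a centred Gaussian element of $L^1$ for every $s$, and its covariance operator is the overlapping block estimator of the long run covariance $C$ of $(\epsilon_i)$; consistency of this estimator, tested against pairs of $L^\infty$ functionals, follows from A1, A2 and the window choice $l_n\simeq n^\beta$, $\beta\in(1/5,2/7)$, together with \eqref{bandwidth}, by the standard bias--variance analysis of block long run variance estimators under $\beta$-mixing, and yields convergence of the conditional finite dimensional distributions. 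For conditional tightness in $C([0,1],L^1)$ I would reuse the estimates behind Theorem~\ref{Thm:L1Inv}: a chaining/maximal inequality for $L^1$-valued partial sums that uses A3 to control the one dimensional marginals and A4 to obtain a quantitative modulus of continuity, now applied to the Gaussian bootstrap process conditionally on the data. Passing from $\tilde S_n^*$ back to $S_n^*$ is a perturbation argument: the difference is governed by $\hat\mu^{(1)}-\mu^{(1)}$ and $\hat\mu^{(2)}-\mu^{(2)}$, which are $O_\p(n^{-1/2})$ by A1--A2, and, under $H_1$, by the $O_\p(1)$ indices separating $\hat k$ and $k^*$ from Theorem~\ref{Thm:CPCon}; after the $\sqrt{l}$ scaling and the averaging in \eqref{bseqdef} this is $o_\p(1)$ in $\norm{\cdot}_{\infty,1}$. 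Hence $\U_n^*(s)=S_n^*(s)-sS_n^*(1)\overset{d}{\to}\W(s)-s\W(1)$ conditionally in probability, and the continuous mapping theorem gives $\hat T_n^*\overset{d}{\to}L$ conditionally in probability.

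\textbf{From the invariance principle to the two claims.} Under $H_0$ the Corollary gives $\hat T_n\overset{d}{\to}L$; since Step~1 shows the conditional law of $\hat T_n^*$ converges to that of $L$ in probability, the bootstrap quantile satisfies $q_{1-\alpha}^*\overset{p}{\to}q_{1-\alpha}$, the $(1-\alpha)$-quantile of $L$, as soon as $t\mapsto\p(L\leq t)$ is continuous at $q_{1-\alpha}$ — which I would deduce from anti-concentration for norms of Gaussian processes (the law of the norm of a non-degenerate Gaussian element of a separable Banach space has no atom in $(0,\infty)$, and the supremum over $s$ inherits continuity of its distribution function; see e.g. \cite{ledoux1991}). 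A Slutsky argument then yields $\p(\hat T_n>q_{1-\alpha}^*)\to\p(L>q_{1-\alpha})=\alpha$. Under $H_1$, a direct computation gives $\E[\U_n(s)]=s(1-s^*)(\mu^{(1)}-\mu^{(2)})+o(1)$ for $s\leq s^*$, so $\norm{\E[\U_n(s^*)]}_1\to s^*(1-s^*)\norm{\mu^{(1)}-\mu^{(2)}}_1>0$; together with the unnormalised law of large numbers $\norm{\U_n(s^*)-\E[\U_n(s^*)]}_1=o_\p(1)$ this forces $\hat T_n\geq\sqrt{n}\,\norm{\U_n(s^*)}_1\to\infty$ in probability, while Step~1 applies verbatim under $H_1$ — this is exactly where the recentring in $\hat Y_{n,i}$ and the estimator consistency of Theorem~\ref{Thm:CPCon} are used — so $\hat T_n^*=O_\p(1)$ and hence $q_{1-\alpha}^*=O_\p(1)$. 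Therefore $\p(\hat T_n>q_{1-\alpha}^*)\to1$.

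\textbf{Main obstacle.} The crux is Step~1: a \emph{conditional} weak invariance principle for the block multiplier bootstrap in the non-Hilbertian space $L^1$. Two points demand care — operator-level consistency of the overlapping block long run covariance estimator under $\beta$-mixing with the window \eqref{bandwidth}, and, more seriously, conditional tightness in $C([0,1],L^1)$, where the absence of type $2$ rules out orthogonality/FPCA shortcuts and one must instead push the sharp conditions A3 and A4 through the chaining argument of Theorem~\ref{Thm:L1Inv}, uniformly over $s$ and in probability over the data.
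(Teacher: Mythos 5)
Your strategy coincides with the paper's proof in all essentials: a conditional weak invariance principle for $S_n^*$ in $C([0,1],L^1)$, obtained by first treating the oracle process built from the $\epsilon_i$ (finite-dimensional convergence against $L^\infty$ functionals plus tightness via the cotype-2/chaining machinery of Theorem \ref{Thm:L1Inv} and A4), then a perturbation argument reducing $S_n^*$ to the oracle, and finally the standard quantile/Slutsky step under $H_0$ and the divergence of $\hat T_n$ combined with $q^*_{1-\alpha}=O_\p(1)$ under $H_1$. The paper phrases the conditional convergence as joint weak convergence of $(\sqrt{n}\,n^{-1}\sum_i\epsilon_i, S^*_{n,1},\dots,S^*_{n,k})$ to independent copies of $\W$ (invoking Lemma 2.2 of B\"ucher and Kojadinovic), which is equivalent to your formulation.

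One step in your perturbation argument is under-justified, and it is precisely the point the paper spends an extra paragraph on. You assert that $\hat\mu^{(1)}-\mu^{(1)}$ and $\hat\mu^{(2)}-\mu^{(2)}$ are $O_\p(n^{-1/2})$ ``by A1--A2''. Under $H_1$ this is fine because Theorem \ref{Thm:CPCon} gives $|\hat k-k^*|=O_\p(1)$ with $k^*/n\to s^*\in(0,1)$, so both segment averages are over $\Theta(n)$ observations. Under $H_0$, however, there is no true change point and $\hat k$ has no consistency property; if $\hat k$ were of order $o(n)$, the average $\hat\mu^{(1)}=\hat k^{-1}\sum_{i\le\hat k}X_i$ need not concentrate at rate $n^{-1/2}$, and the recentring term $(\hat\mu^{(2)}-\hat\mu^{(1)})\1\{i\ge\hat k\}$ in $\hat Y_{n,i}$ is then not negligible after the $\sqrt{l}$-block aggregation. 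The paper closes this by showing that under $H_0$ the argmax $\hat s$ lies in $[\rho,1-\rho]$ with probability at least $1-\epsilon$ for a suitable $\rho=\rho(\epsilon)$ — this uses the invariance principle for $\U_n$ together with the continuity of the law of $\norm{\W(s)-s\W(1)}_{\infty,1}$ and the modulus of continuity of the limiting bridge near $s\in\{0,1\}$ — and only then runs the perturbation bound on that event. You need this (or an equivalent localization of $\hat s$ away from the endpoints) for your Step 1 to go through under the null.
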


\section{Change Point Detection: Relevant case}
\label{sec4}
We adopt the setting and notation from the previous section about change point detection in the classical case, the main difference is that we will be testing the hypotheses
\begin{align}
\label{h1}
      H_0(\Delta):\norm{\mu^{(1)}-\mu^{(2)}}_1\leq \Delta \quad vs \quad  H_1(\Delta):\norm{\mu^{(1)}-\mu^{(2)}}_1 >\Delta 
\end{align}
instead. This is motivated by the fact that in real world applications hypotheses like \eqref{classhyp} that assert exact equality are too optimistic - or as \cite{tukey1991} puts it : {\it `` Statisticians classically asked the wrong question - and were willing to answer with a lie, one that was often a downright lie. They asked ''Are the effects of A and B different?'' and they were willing to answer ''no''.``} This is particularly relevant in a modern context where sample sizes can be so large that even small but practically irrelevant changes are picked up by consistent testing procedures. The proposed hypotheses \eqref{h1} avoid this problem by means of allowing for "small" changes under the null, here "small" has a precise meaning in the form of the threshold $\Delta$ and can either be specified by the user or by a data driven procedure that we describe in remark \ref{r1}. As an example we mention the analysis in \cite{bastian2023} of a biomechanical data set, there the authors mapped changes in the mean of knee angle data (i.e. each observation corresponds to the observed knee angles of one stride) during a prolonged running period to fatigue statues of the runner. The authors also used relevant hypotheses to discard negligible changes in the runners gait that were not associated with fatigue. \\

Hypotheses of this kind are particularly interesting when considering the $L^1$ or supremum norm as these norms offer a natural interpretation for the value of $\Delta$ that is accessible to practicioners (area between the curves, maximum absolute deviation between the curves).

\textbf{Definition of the Test}
A straightforward calculation establishes that
\begin{align}
    \E[\U_n(s)]=(s \land s^*-ss^*)(\mu^{(1)}-\mu^{(2)})~.
\end{align}
As the function $s \rightarrow (s \land s^*-ss^*)$ on the interval $[0,1]$ attains its maximum at $s=s^*$ the statistic
\begin{align}
    \hat T_{n,\Delta}=\sqrt{n}\Big(\sup_{s \in [0,1]}\norm{\U_n(s)}_1-\hat s(1-\hat s)\Delta \Big)
\end{align}
is a natural candidate to test the hypotheses \eqref{h1}. More specifically, letting $d_1=\norm{\mu^{(1)}-\mu^{(2)}}_1$, we will establish the following result regarding its asymptotic behaviour:
\begin{theorem}
\label{Thm:BootTest}
Let $\Delta>0$ and grant assumptions A1) to A4). Then
    \begin{align}
\hat T_{n,\Delta}\overset{d}{\rightarrow}
    \begin{cases}
        - \infty \quad &d_1 <\Delta\\
        T \quad &d_1=\Delta\\
        \infty \quad &d_1>\Delta
    \end{cases}
\end{align}
Here $T$ is distributed as
\begin{align}
    T\overset{d}{=}\int_{\mathcal{N}^c}\text{sgn}(d(t))(\W(s^*)-s^*\W(1))(t)dt+\int_{\mathcal{N}}\Big|(\W(s^*)-s^*\W(1))(t)\Big|dt
\end{align}
where
\begin{align}
    d(t)&=\mu^{(1)}(t)-\mu^{(2)}(t)\\
    \mathcal{N}&=\{ t \in [0,1] \ | \ d(t)=0\}~.
\end{align}
\end{theorem}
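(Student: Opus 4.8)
The plan is to decompose the statistic $\hat T_{n,\Delta}$ into a part that converges to a proper random variable and a deterministic drift term that blows up (or down) except in the boundary case. Write
\begin{align}
\hat T_{n,\Delta} = \sqrt{n}\Big(\sup_{s}\norm{\U_n(s)}_1 - \sup_s \norm{\E[\U_n(s)]}_1\Big) + \sqrt{n}\Big(\sup_s \norm{\E[\U_n(s)]}_1 - \hat s(1-\hat s)\Delta\Big).
\end{align}
Using $\E[\U_n(s)] = (s\land s^* - ss^*)(\mu^{(1)}-\mu^{(2)})$, the supremum over $s$ of $\norm{\E[\U_n(s)]}_1$ equals $s^*(1-s^*)d_1$. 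Since $\hat s \to s^*$ in probability (in fact $|\hat s - s^*| = O_\p(n^{-1})$ by Theorem \ref{Thm:CPCon}), the second bracket behaves like $\sqrt n\, s^*(1-s^*)(d_1 - \Delta)$ up to $o_\p(\sqrt n)$ corrections, which tends to $-\infty$, $0$, or $+\infty$ according to the sign of $d_1 - \Delta$. So the cases $d_1 \neq \Delta$ follow once the first bracket is shown to be $O_\p(1)$, and the real content is the boundary case $d_1 = \Delta$.

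For the boundary case, the strategy is a functional-delta-method / Hadamard-directional-derivative argument applied to the map $f \mapsto \sup_s \norm{f(s)}_1$ on $C([0,1], L^1)$. By Theorem \ref{Thm:L1Inv} (and the resulting weak invariance principle for $\U_n$), we have $\sqrt n (\U_n(s) - \E[\U_n(s)]) \rightsquigarrow \W(s) - s\W(1)$ in $C([0,1],L^1)$. The argument maximum of $s \mapsto \norm{\E[\U_n(s)]}_1 = (s\land s^* - ss^*)d_1$ is the unique point $s^*$, so localizing near $s^*$ and using the consistency rate $|\hat s - s^*| = O_\p(n^{-1})$, the supremum over $s$ collapses to evaluation at $s^*$ in the limit: $\sqrt n(\sup_s \norm{\U_n(s)}_1 - s^*(1-s^*)d_1) = \sqrt n(\norm{\U_n(s^*)}_1 - \norm{\E[\U_n(s^*)]}_1) + o_\p(1)$. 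It then remains to compute the directional derivative of the $L^1$-norm at the function $g := (s^*- (s^*)^2)d = s^*(1-s^*)(\mu^{(1)}-\mu^{(2)})$ in the random direction $H := \W(s^*) - s^*\W(1)$. Splitting $[0,1] = \mathcal N^c \cup \mathcal N$ where $\mathcal N = \{d = 0\}$: on $\mathcal N^c$, $g(t) \neq 0$, the integrand $|g + \tfrac{1}{\sqrt n}H|$ is differentiable with derivative $\text{sgn}(g(t))H(t) = \text{sgn}(d(t))H(t)$; on $\mathcal N$, $g(t) = 0$ and $|g(t) + \tfrac{1}{\sqrt n}H(t)| = \tfrac{1}{\sqrt n}|H(t)|$, contributing $\int_{\mathcal N}|H(t)|dt$. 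Dominated convergence (justified by A1, A3 and integrability of the Gaussian limit) lets us pass the limit inside the integral, yielding exactly the claimed law for $T$.

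A few technical points need care. First, to go from weak convergence of $\U_n$ to the convergence of the supremum one should either invoke an almost-sure-representation (Skorokhod) construction — legitimate since $C([0,1],L^1)$ is separable — or, better, directly exploit the strong invariance principle: couple $\sum_{j\le ns}\epsilon_j$ to a Brownian motion $B$ with error $o_\p(\sqrt n)$ uniformly in $s$, and then all manipulations become deterministic estimates on a single probability space. Second, the localization near $s^*$ must be made quantitative: for $s$ bounded away from $s^*$, $\norm{\E[\U_n(s)]}_1 \le (s^*(1-s^*) - c)d_1$ for some $c>0$, so the fluctuation term $\tfrac{1}{\sqrt n}\norm{\W(s) - s\W(1)}_1 = O_\p(n^{-1/2})$ cannot lift $\norm{\U_n(s)}_1$ above $\norm{\U_n(s^*)}_1$; combined with the $O_\p(n^{-1})$ rate for $|\hat s - s^*|$ and Lipschitz-continuity of $s\mapsto(s\land s^* - ss^*)$ this confines the relevant $s$ to an $O_\p(n^{-1})$ neighborhood of $s^*$, on which the drift is flat to order $o(n^{-1/2})$. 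Third — and this is the main obstacle — justifying the interchange of $\sup_s$ and the limit together with the directional differentiation of $\norm{\cdot}_1$ simultaneously: the $L^1$ norm is only directionally (Hadamard) differentiable, not fully differentiable, at functions vanishing on a set of positive measure, so one cannot quote the classical functional delta method as a black box. The clean way is to write, on the coupling space,
\begin{align}
\sqrt n\big(\norm{\U_n(s^*)}_1 - \norm{g}_1\big) = \int_0^1 \sqrt n\Big(\big|g(t) + \tfrac{1}{\sqrt n}H_n(t)\big| - |g(t)|\Big)dt
\end{align}
with $H_n := \sqrt n\,\U_n(s^*) \to H$ in $L^1$, bound the integrand pointwise by $|H_n(t)|$ (reverse triangle inequality), upgrade the $L^1$-convergence to a.e. convergence along a subsequence, and apply dominated convergence — handling $\mathcal N^c$ and $\mathcal N$ separately as above. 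The uniform-integrability input for the dominating sequence $|H_n|$ is exactly what assumptions A1, A3, A4 are designed to supply.
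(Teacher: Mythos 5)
Your proposal is correct and follows essentially the same route as the paper: the same drift-plus-fluctuation decomposition handling $d_1\neq\Delta$ by divergence of the deterministic term, the invariance principle for the centered CUSUM process, and the directional Hadamard derivative of $G\mapsto\sup_s\norm{G(s)}_1$ at the drift function, which collapses to evaluation at the unique maximizer $s^*$ and splits over $\mathcal N$ and $\mathcal N^c$. The only difference is organizational: the paper packages the differentiability step as Theorem \ref{Thm:HadDiff} (via the chain rule through $\phi_1,\phi_2,\phi_3$) and invokes the extended delta method of \cite{shapiro1991} for directionally differentiable maps, whereas you inline the same computation (reverse triangle inequality, domination by $|H_n|$, convergence a.e.\ along subsequences) by hand.
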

Using the quantiles of $T$ would therefore yield a consistent asymptotic level $\alpha$ test for $\eqref{h1}$. Unfortunately this test is not feasible as it depends on the data in a rather complicated manner, to be precise it depends both on the covariance structure of the error process and on the set $\mathcal{N}$. We propose three bootstrap procedures to cope with this issue, a comparison of their performances and recommendations regarding their use is given in Section \ref{sec6}. \\

\textbf{Procedure 1:}
The first procedure is based on directly estimating the limiting distribution, to that end we define
\begin{align}
    \hat d(t)&=\hat \mu^{(1)}(t)-\hat \mu^{(2)}(t)\\
    \hat{\mathcal{N}}&=\Big\{ t \in [0,1] \ | \ |\hat d(t)|\leq \frac{\log(n)}{\sqrt{n}}\Big\}
\end{align}
and let
\begin{align}
\label{boot1}
    \hat T^*=\int_{\mathcal{N}^c}\text{sgn}(\hat d(t))\U^*_n(\hat s,t)dt+\int_{\mathcal{N}}\Big|\U^*_n(\hat s,t)\Big|dt~.
\end{align}
Letting $\hat q^*_{1-\alpha}$ denote the $(1-\alpha)$-quantile of $\hat T^*$ we obtain that 
\begin{theorem}
\label{Thm:RelBoot}
    Grant assumptions A1) to A4) and assume that \eqref{bandwidth} holds.  Then we have that
    \begin{align}
        \sqrt{n}\hat T^* \overset{d}{\rightarrow} T
    \end{align}
    conditionally on $X_1,...,X_n$ in probability. In particular the test that rejects $H_0(\Delta)$ whenever
    \begin{align}
        \1\{ \hat T_{n,\Delta}>\hat q^*_{1-\alpha}\}
    \end{align}
    is consistent and has asymptotic level $\alpha$.
\end{theorem}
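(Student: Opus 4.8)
The plan is to establish the bootstrap consistency by first proving a conditional weak invariance principle for the bootstrap CUSUM process $\U_n^*$, then transferring the distributional convergence through the (directionally differentiable) functional that defines $\hat T^*$, while controlling the estimation error in the plug-in set $\hat{\mathcal{N}}$. First I would show that, conditionally on $X_1,\dots,X_n$ and in probability, $\{\sqrt{n}\,\U_n^*(s)\}_{s\in[0,1]}$ converges weakly in $C([0,1],L^1)$ to $\{\W(s)-s\W(1)\}_{s\in[0,1]}$, the same limit as the non-bootstrap process. This is the bootstrap analogue of the weak invariance principle referenced in Remark \ref{r1}(ii); the block multiplier construction in \eqref{bseqdef} is designed precisely so that the long-run covariance $C$ is consistently reproduced. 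The key inputs are: (a) the residuals $\hat Y_{n,i}$ differ from the true errors $\epsilon_i$ by the estimation error of $\hat\mu^{(1)},\hat\mu^{(2)}$ plus the localization error $|\hat k-k^*|=O_\p(1)$ from Theorem \ref{Thm:CPCon}, both negligible at rate $\sqrt{n}$; (b) conditional on the data the summands are independent Gaussian, so finite-dimensional convergence follows from a Lindeberg-type argument using A1)--A2) and the bandwidth condition \eqref{bandwidth}; (c) tightness in $C([0,1],L^1)$ follows from a moment bound on $L^1$-increments together with A3)--A4), exactly as in the proof of Theorem \ref{Thm:L1Inv}.

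Next I would handle the plug-in estimate of the null set. Since $\hat s \overset{\p}{\to} s^*$ and $\hat d \to d$ uniformly (in $L^1$, at rate $n^{-1/2}\log n$ by the CLT for $\hat\mu^{(1)},\hat\mu^{(2)}$), the threshold $\log(n)/\sqrt{n}$ is chosen so that $\hat{\mathcal{N}}$ is sandwiched between $\mathcal{N}$ and a shrinking neighborhood of $\mathcal{N}$: with probability tending to one, $\mathcal{N}\subseteq\hat{\mathcal{N}}\subseteq\{t:|d(t)|\le 2\log(n)/\sqrt{n}\}$, so that the Lebesgue measure of the symmetric difference $\hat{\mathcal{N}}\triangle\mathcal{N}$ tends to $0$ (here one uses that $|d|$ has no positive-measure level set at $0$ other than $\mathcal N$ itself, which holds since $\mathcal N=\{d=0\}$). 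On $\mathcal{N}^c$ we also need $\mathrm{sgn}(\hat d)\to \mathrm{sgn}(d)$ Lebesgue-a.e., which follows from the $L^1$ (hence a.e. along a subsequence) convergence of $\hat d$ to $d$. Combining this with the conditional weak convergence of $\U_n^*(\hat s,\cdot)$ to $\W(s^*)-s^*\W(1)$ in $L^1$, and the boundedness of $\mathrm{sgn}$ and $1$-Lipschitz continuity of $|\cdot|$, a continuous-mapping / dominated-convergence argument gives
\begin{align}
    \sqrt{n}\hat T^* = \int_{\hat{\mathcal{N}}^c}\mathrm{sgn}(\hat d)\,\sqrt n\,\U_n^*(\hat s,t)\,dt + \int_{\hat{\mathcal{N}}}\big|\sqrt n\,\U_n^*(\hat s,t)\big|\,dt \overset{d}{\to} T
\end{align}
conditionally on the data in probability. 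The consistency and level statements then follow by the standard argument: under $H_0(\Delta)$ with $d_1<\Delta$ one has $\hat T_{n,\Delta}\to-\infty$ by Theorem \ref{Thm:BootTest} while $\hat q^*_{1-\alpha}=O_\p(1)$ (since $\sqrt n\hat T^*\Rightarrow T$), so the rejection probability tends to $0\le\alpha$; in the boundary case $d_1=\Delta$, $\hat T_{n,\Delta}\Rightarrow T$ and $\hat q^*_{1-\alpha}$ converges to the $(1-\alpha)$-quantile of $T$, giving level exactly $\alpha$ provided $T$ has a continuous distribution function at that quantile; under $H_1(\Delta)$, $\hat T_{n,\Delta}\to\infty$ while $\hat q^*_{1-\alpha}=O_\p(1)$, giving power $\to 1$.

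The main obstacle I anticipate is the interplay between the two sources of approximation in the integral over $\mathcal N$: the bootstrap process must converge in $L^1$ \emph{jointly} with the random set $\hat{\mathcal N}$ converging to $\mathcal N$, and one cannot simply appeal to continuity of $f\mapsto\int_{\mathcal N}|f|$ because $\hat{\mathcal N}$ is itself data-dependent and random. The clean way around this is to bound $\big|\int_{\hat{\mathcal N}}|\sqrt n\U_n^*| - \int_{\mathcal N}|\sqrt n\U_n^*|\big| \le \int_{\hat{\mathcal N}\triangle\mathcal N}|\sqrt n\U_n^*(\hat s,t)|\,dt$ and to control the right-hand side uniformly using a maximal inequality for $\sqrt n\U_n^*$ (tightness gives that $\|\sqrt n\U_n^*\|_{\infty,1}$ is conditionally $O_\p(1)$, and on the small-measure set $\hat{\mathcal N}\triangle\mathcal N$ the $L^1$-mass over that set is small because $\sqrt n\U_n^*(\hat s,\cdot)$ is conditionally tight in $L^1$, i.e. uniformly integrable in the relevant sense). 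A similar argument controls the discrepancy of the sign-integral over $\mathcal N^c$. Establishing this uniform smallness of the $L^1$-mass on vanishing-measure sets — essentially an equi-integrability statement for the bootstrap partial-sum process — is the technical crux; everything else is a packaging of the weak invariance principle and the consistency results already available in the paper.
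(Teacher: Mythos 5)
Your proposal is correct and follows essentially the same route as the paper: a conditional weak invariance principle for the bootstrap CUSUM process, consistency of the set estimator $\hat{\mathcal{N}}$ in the sense that $\lambda(\hat{\mathcal{N}}\triangle\mathcal{N})=o_\p(1)$ (the paper's Lemma~\ref{Lem:SetEst}), replacement of $\hat s$ by $s^*$ via Theorem~\ref{Thm:CPCon}, and the directional Hadamard derivative plus continuous mapping to identify the limit $T$, with the paper merely packaging the ``conditional in probability'' statement through joint convergence with iid bootstrap copies. Your observation that the symmetric-difference term requires a uniform equi-integrability bound for $\sqrt{n}\,\U_n^*(\hat s,\cdot)$ on vanishing-measure sets is in fact slightly more careful than the paper's one-line bound at that step.
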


\textbf{Procedure 2:}
The second test is based on the observation that in applications it is quite common that
\begin{align}
    \lambda(\mathcal{N})=0
\end{align}
holds. In that case procedure 1 will be conservative in finite samples as $\lambda(\hat{\mathcal{N}})$ is larger than 0 whenever the two curves $\mu^{(1)}$ and $\mu^{(2)}$ intersect at least once. We therefore propose 
\begin{align}
\label{boot2}
    \doublehat{T}^*=\int_0^1\text{sgn}(\hat d(t))\U^*_n(s,t)dt
\end{align}
as an alternative to $\hat T^*$. This test naturally only holds the desired nominal level in the case that $\lambda(\mathcal{N})=0$ (we omit a precise statement, it follows along the same lines as Theorem \ref{Thm:RelBoot}).\\

\textbf{Procedure 3:}
The last procedure is based on a standard bootstrap statistic of the form
\begin{align}
\label{boot3}
    \tilde T_n^* = \int_0^1\text{sgn}|\U^*_n(s,t)|dt
\end{align}
which is an obvious upper bound for $\hat T^*$, it is easy to show that an analogue of Theorem \ref{Thm:RelBoot} holds with the caveat that the test will be conservative whenever $\lambda(\mathcal{N})<1$. This is always the case whenever $d_1>0$ and so the test is always conservative. We again omit a precise statement for the sake of brevity. The advantage of this statistic is that no estimation of the $\mathcal{N}$ is necessary which, in practice, requires specifying a data dependent multiplier of the rate $\log(n)/\sqrt{n}$ in the definition of its estimator.

\begin{remark} \leavevmode
{\rm
\label{r1}
From a practical point of view the question of how to choose $\Delta$ is of utmost importance as reasonable choices depend on the application at hand. In some cases real world demands or subject knowledge of the practitioner may yield a natural choice of $\Delta$, yet in many cases such information is hard to come by. Fortunately there is, for fixed nominal level $\alpha$, a natural mechanism to determine a threshold $\Delta$ from the data that then can serve as a measure of evidence against the assumption of a constant mean.\\

This mechanism is based on the observations that the hypotheses $H_0(\Delta)$ in  \eqref{h1} are nested, that the test statistic $\hat T_{n,\Delta}$ is monotone in $\Delta$ and that the bootstrap quantiles do not depend on $\Delta$. Rejecting $H_0(\Delta)$ for some $\Delta>0$ thus also implies rejection $H_0(\Delta')$ for all $\Delta'<\Delta$, hence the sequential rejection principle allows for simultaneous testing without incurring size distortions due to multiple testing. More precisely we may test the hypotheses \eqref{h1} for larger and larger $\Delta$ until we find the minimal $\Delta$ for which the hypothesis $H_0(\Delta)$ is not rejected, i.e.
\begin{align}
       \hat \Delta_\alpha:=\min \big \{\Delta \ge 0 \,| \, \hat T_{n,\Delta}\leq q^*_{1-\alpha} \big  \}=\big(\hat d_{\infty,n}-q^*_{1-\alpha}(nh_n)^{-1/2}\big)\lor 0~.
\end{align}  
}
\end{remark}

\section{Power Comparison and Enhancement}
\label{sec5}

In the following we now compare three bootstrap tests for the hypotheses \eqref{classhyp} based on the $L^1,L^2$ and supremum norm, respectively. To be more precise we consider the test \eqref{btestclassic} based on the $L^1, L^2$ and supremum norm, i.e. we define
\begin{align}
    \hat T_n^{(1)}&=\sqrt{n}\max_{s \in [0,1]}\norm{\U_n(s)}_1\\
     \hat T_n^{(2)}&=\sqrt{n}\max_{s \in [0,1]}\norm{\U_n(s)}_2\\
     \hat T_n^{(\infty)}&=\sqrt{n}\max_{s \in [0,1]}\norm{\U_n(s)}_\infty\\
    W_1&=\sup_{s \in [0,1]}\norm{\W(s)-s\W(1)}_{1}\\
    W_2&=\sup_{s \in [0,1]}\norm{\W(s)-s\W(1)}_{2}\\
    W_\infty&=\sup_{s \in [0,1]}\norm{\W(s)-s\W(1)}_{\infty}
\end{align}
and note that by (minor variations of) the results of \cite{Sharipov2016}, \cite{dette2020} and of this paper we have that $\hat T_n^{(i)}\overset{d}{\rightarrow} W_i$ for $i=1,2,\infty$ under suitable assumptions. The same holds for the respective bootstrap versions of the statistics.\\

Let us now consider a fixed alternative, i.e. we have 
\begin{align}
    \mu^{(1)}-\mu^{(2)}=\Phi~
\end{align}
for some fixed function $\Phi$ which, to avoid cumbersome technical discussions, we shall assume to be continuous. We consequently define the three tests
\begin{align}
    \1\{\hat T_n^{(1)} \geq q^*_{1-\alpha, 1} \}\\
    \1\{ \hat T_n^{(2)}  \geq q^*_{1-\alpha, 2}\}\\
    \1\{ \hat T_n^{(\infty)}  \geq q^*_{1-\alpha, \infty}\}\\
\end{align}
where $q^*_{1-\alpha, i}$ is given by the quantile of the respective bootstrap statistic.\\
A simple application of Jensen's inequality yields that
\begin{align}
\label{pb20}
   q^*_{1-\alpha, 1} < q^*_{1-\alpha, 2} < q^*_{1-\alpha, \infty}~.
\end{align}
We also have the following theorem regarding the behaviours of $\hat T_n^{(i)}$ under the alternative.
\begin{theorem}
\label{Thm:Alter}
    Grant assumptions A1) to A3) for the supremum (and hence also for the $L^1$ and $L^2$) norm, further assume that $X_{n,i}$ takes values in $C([0,1])$ and that \begin{align}
        \E[|\epsilon_i(s)-\epsilon_i(t)|^2]^{1/2} \lesssim |s-t|^\alpha~.
    \end{align}
    for some $\alpha>1/2$. Then
    \begin{align}
        &\hat T_n^{(1)}-\sqrt{n}\norm{\Phi}_1 \rightarrow A_1:=T\\
         &\hat T_n^{(2)}-\sqrt{n}\norm{\Phi}_2 \rightarrow A_2:=\langle \W(s^*)-s^*\W(1), \Phi/\norm{\Phi}_2 \rangle\\
          &\hat T_n^{(\infty)}-\sqrt{n}\norm{\Phi}_\infty \rightarrow A_\infty:=\sup_{t \in \mathcal{E}}\text{sgn}(\Phi(t))(\W(s^*,t)-s^*\W(1,t))) 
    \end{align}
    where $T$ is given in Theorem \ref{Thm:BootTest} and $\mathcal{E}$ is given by
    \begin{align}
        \{ t \in [0,1] | |\Phi(t)|=\norm{\Phi(t)}_\infty \}~.
    \end{align}
\end{theorem}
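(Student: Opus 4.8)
All three statements rest on the same mechanism. Under the fixed alternative the cusum decomposes as $\U_n(s) = g(s)\Phi + \tilde\U_n(s)$, where $g(s) = (s\wedge s^*) - s s^*$ has a unique maximum $g(s^*) = s^*(1-s^*)$ at $s^*$ with a downward kink (one-sided slopes $1-s^*$ and $-s^*$) and $\tilde\U_n(s) = \frac1n\big(\sum_{i\le ns}\epsilon_i - s\sum_{i\le n}\epsilon_i\big)$ is the cusum of the errors. The plan is: (i) show that the maximiser of $\norm{\U_n(\cdot)}_i$ concentrates on a shrinking window around $s^*$; (ii) expand $\norm{\U_n(s)}_i$ there to first order, the linear term being the directional derivative of $\norm{\cdot}_i$ at $\Phi$ evaluated at $\sqrt n\,\tilde\U_n$; (iii) identify the limit via the Gaussian limit of $\sqrt n\,\tilde\U_n$. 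For (iii) I would invoke Theorem~\ref{Thm:L1Inv} for $\norm{\cdot}_1$, the $C([0,1])$ invariance principle mentioned in the remark after it for $\norm{\cdot}_\infty$, and the Hilbert-space invariance principle of \cite{Sharipov2016,Aue2017} for $\norm{\cdot}_2$; the H\"older condition $\alpha>1/2$ supplies the requisite tightness and in particular places the Gaussian limit in $C([0,1])$. Together these give $\sqrt n\,\tilde\U_n(\cdot)\Rightarrow\W(\cdot) - (\cdot)\W(1)$ in $C([0,1],\mathcal X)$ with $\mathcal X\in\{L^1,C([0,1]),L^2\}$; after a Skorokhod coupling I may assume this convergence is almost sure, so in particular $\sup_s\norm{\sqrt n\,\tilde\U_n(s)}_\mathcal X = O_\p(1)$.

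For step (i): since $g$ has the sharp downward kink at $s^*$ while the fluctuations are $O_\p(1)$, one checks that with probability tending to one the maximum of $s\mapsto\norm{\U_n(s)}_i$ is attained on $W_n := \{|s-s^*|\le\delta_n\}$ for a sequence $\delta_n\to0$ with $\sqrt n\,\delta_n\to\infty$; the sharper rate $|\hat s-s^*|=O_\p(n^{-1})$ of Theorem~\ref{Thm:CPCon} is available but not needed here. On $W_n$ the drift norm $g(s)\norm{\Phi}_i$ stays bounded away from zero and, using the almost sure uniform convergence above together with continuity of the limit path, $\sqrt n\,\tilde\U_n(s) = \big(\W(s^*) - s^*\W(1)\big) + o_\p(1)$ uniformly over $W_n$.

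Step (ii) is the first-order expansion. For $\norm{\cdot}_2$ this is the classical expansion $\norm{a+h}_2 = \norm{a}_2 + \langle a/\norm{a}_2,h\rangle + O(\norm{h}_2^2/\norm{a}_2)$ at $a=g(s)\Phi\ne0$; for $\norm{\cdot}_\infty$ and $\norm{\cdot}_1$ one uses the one-sided directional derivatives at $a=g(s)\Phi$, which (as $g(s)>0$ on $W_n$) are $h\mapsto\sup_{t\in\mathcal E}\text{sgn}(\Phi(t))h(t)$ and $h\mapsto\int_{\mathcal N^c}\text{sgn}(\Phi(t))h(t)\,dt + \int_{\mathcal N}|h(t)|\,dt$, with $\mathcal E$, $\mathcal N$ as in the statement. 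Taking $h=\tilde\U_n(s)$ and multiplying by $\sqrt n$: each linear term is $1$-Lipschitz in $\norm{\cdot}_\mathcal X$, hence converges uniformly on $W_n$ to the same functional of $\W(s^*)-s^*\W(1)$, i.e.\ to $A_2$, $A_\infty$ or $A_1=T$; the drift contributes $\sqrt n\,g(s)\norm{\Phi}_i$, maximised at $s^*$, whose kink pushes the overall maximiser back to $s^*$ because moving off $s^*$ costs the drift order $\sqrt n|s-s^*|\to\infty$ while gaining only $o(1)$ in the linear term. Collecting terms yields $\hat T_n^{(i)} - \sqrt n\,s^*(1-s^*)\norm{\Phi}_i \to A_i$.

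The main obstacle is the uniform control of the remainder in the directional expansion of the non-smooth norms $\norm{\cdot}_1$ and $\norm{\cdot}_\infty$, and this is exactly where continuity of $\Phi$ enters. For $\norm{\cdot}_1$ the pointwise remainder vanishes off the set $\{t:0<g(s)|\Phi(t)|\le|\tilde\U_n(s)(t)|\}$ and is bounded by $2|\tilde\U_n(s)(t)|$ on it; this set has Lebesgue measure at most $\lambda\big(\{0<|\Phi|\le C\norm{\tilde\U_n(s)}_\infty\}\big)$, which tends to $0$ since $\norm{\tilde\U_n(s)}_\infty=O_\p(n^{-1/2})$ (by the H\"older assumption) and $\Phi$ is continuous, so $\sqrt n$ times the remainder is $O_\p(1)\cdot o_\p(1)=o_\p(1)$ uniformly on $W_n$. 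For $\norm{\cdot}_\infty$ one shows that for large $n$ the supremum defining $\norm{g(s)\Phi+\tilde\U_n(s)}_\infty$ is attained on an arbitrarily small neighbourhood of the contact set $\mathcal E$ — a consequence of the uniform smallness of $\tilde\U_n(s)$ and the uniform continuity of $\Phi$ — on which it equals $\norm{g(s)\Phi}_\infty + \sup_{t\in\mathcal E}\text{sgn}(\Phi(t))\tilde\U_n(s)(t)$ up to an error controlled by the modulus of continuity. The remaining bookkeeping — patching $W_n$ to the rest of $[0,1]$ and letting $\delta_n\to0$ — is then routine.
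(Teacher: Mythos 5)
Your argument is correct, and it even lands on the right centering: the limit you derive is for $\hat T_n^{(i)}-\sqrt{n}\,s^*(1-s^*)\norm{\Phi}_i$, which is the version consistent with Theorem \ref{Thm:BootTest} (the factor $s^*(1-s^*)$ appears to have been dropped in the displayed statement of the theorem). The underlying mechanism is the same as the paper's — an invariance principle for the centered cusum process in $C([0,1],\mathcal{X})$ with $\mathcal{X}\in\{L^1,L^2,C([0,1])\}$, combined with first-order directional differentiability of the norm at the deterministic drift $g(\cdot)\Phi$ — but the execution is genuinely different. The paper dispatches the theorem in three sentences by treating $G\mapsto\sup_{s}\norm{G(s)}_{\mathcal{X}}$ as a single directionally Hadamard differentiable map on $C([0,1],\mathcal{X})$ (Theorem \ref{Thm:HadDiff} for $L^1$, \cite{Carcamo2020} for the supremum norm, an elementary computation for $L^2$); the supremum over the set of maximizing time points is already built into that derivative and collapses to $\{s^*\}$ at the drift, after which the extended delta method of \cite{shapiro1991} gives the limit in one step. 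You instead unbundle the composite map: localize the temporal maximizer to a window $W_n$ around $s^*$ using the kink of $g$, then expand the inner norm $\norm{g(s)\Phi+\tilde{\U}_n(s)}_{\mathcal{X}}$ with explicit remainder control (the level-set measure argument for $L^1$, the Danskin-type argument for $L^\infty$). The paper's route buys brevity and reuse, since the same Hadamard derivative also drives Theorems \ref{Thm:BootTest} and \ref{Thm:RelBoot}; your route is self-contained, avoids the chain rule for directional Hadamard derivatives and Shapiro's theorem, and makes transparent why the limit only feels the fluctuation at $s=s^*$. Both arguments need the invariance principle in $C([0,1]^2)$ supplied by $\alpha>1/2$ to get the $L^2$ and supremum cases and to control $\norm{\tilde{\U}_n(s)}_\infty$ in your $L^1$ remainder bound, and you correctly identify that the crude localization $\delta_n\to0$, $\sqrt{n}\delta_n\to\infty$ suffices without the sharper rate of Theorem \ref{Thm:CPCon}.
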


Let us now isolate a class of $\Phi$ for which we can compare the performances of the three tests reasonably well. To that end define
\begin{align}
    \mathfrak{A}:=\{ \Phi_c:[0,1] \rightarrow \R | \Phi(t)=e^{-c(x-0.5)^2}, c\in [0,\infty) \}
\end{align}
and note that $c$ is a sparsity parameter, the higher $c$ the sparser the signal is distributed over the interval. 

Letting $\Phi=\Phi_c$ we therefore obtain
\begin{align}
\label{pb21}
    \p(\hat T^{(i)}>q^*_{1-\alpha,i})\simeq \p(A_i>q_{1-\alpha,i}-\sqrt{n}\norm{\Phi_c}_i)
\end{align}
where $q_{1-\alpha,i}$  is the $(1-\alpha)$-quantile of $W_i$.\\
\begin{figure}[H]
 
    \includegraphics[scale= 0.66]{./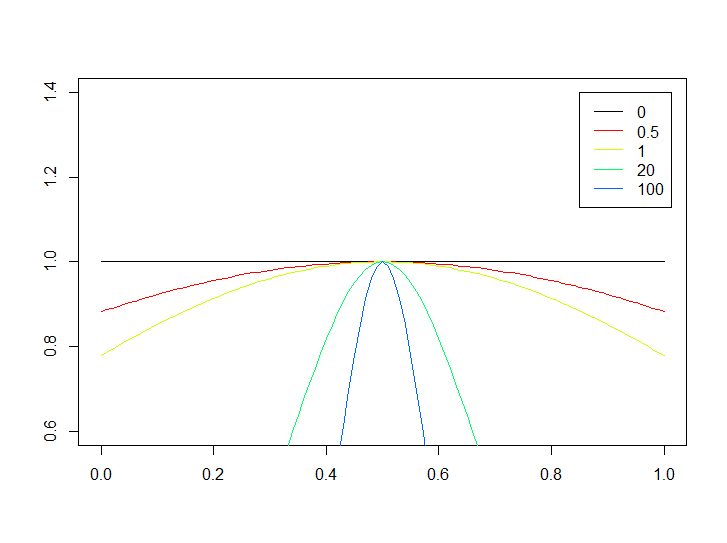}       
    
    \caption{\it  Plots of $\Phi_c$ for different choices of $c$. 
 }
    \label{Fig:2}
\end{figure}
It is easy to see that $A_1,A_2$ and $A_\infty$ are continuous (in probability) in $c$ and the same clearly holds for $\norm{\Phi_c}_i, i=1,2,\infty$. Further we observe that $A_1$ is increasing in $c$ while $A_\infty$ is decreasing in $c$. Let us consider the boundary points $c=0$ and $c\rightarrow \infty$. In the first case we have that 
\begin{align}
    A_1=A_2\leq A_\infty\\
    \norm{\Phi_c}_1=\norm{\Phi_c}_2=\norm{\Phi_c}_\infty=1    
\end{align}
while in the second case we have that $A_1,A_2,A_\infty$ are uniformly subgaussian (equation 3.2 in \cite{ledoux1991}) while for $c \rightarrow \infty$
\begin{align}
    &\norm{\Phi_c}_1=o(1)\\
    &\norm{\Phi_c}_2=o(1)\\
    &\norm{\Phi_c}_\infty=1
\end{align}
Combining this with equation \eqref{pb20} therefore yields
\begin{corollary}
Based on equation \eqref{pb21} we define
\begin{align}
    Pow(i,c)=\p(A_i>q_{1-\alpha,i}-\sqrt{n}\norm{\Phi_c}_i)
\end{align}
and obtain, for c sufficiently large, that
\begin{align}
    Pow(1,0)>Pow(2,0)>Pow(\infty,0)\\
    Pow(1,c)+Pow(2,c)<Pow(\infty,c)
\end{align}
This mirrors the results \cite{He2021} have obtained regarding one sample covariance testing in a high dimensional regime (see section 2.2 in their paper). 
\end{corollary}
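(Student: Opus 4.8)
The plan is to obtain both displays by inserting the boundary behaviour of $A_1,A_2,A_\infty$ (from Theorem~\ref{Thm:Alter}) and of $\norm{\Phi_c}_i$ into $Pow(i,c)=\p(A_i>q_{1-\alpha,i}-\sqrt n\norm{\Phi_c}_i)$, and combining this with the strict ordering \eqref{pb20} of the critical values and the observation that each $A_i$, being a non-constant continuous functional of the non-degenerate Gaussian bridge $\mathbb B(s):=\W(s)-s\W(1)$, has a continuous law with full support, so that $x\mapsto\p(A_i>x)$ is strictly decreasing. First I would record the boundary forms of the $A_i$. Since $\Phi_c>0$ for every $c$, Theorem~\ref{Thm:BootTest} forces $\mathcal N=\emptyset$, whence $A_1=\int_0^1\mathbb B(s^*,t)\,dt$ for all $c\ge 0$; at $c=0$ one has $\Phi_0\equiv1$, $\norm{\Phi_0}_1=\norm{\Phi_0}_2=\norm{\Phi_0}_\infty=1$ and $\mathcal E=[0,1]$, so that $A_2=\langle\mathbb B(s^*),1\rangle=A_1$ and $A_\infty=\sup_{t\in[0,1]}\mathbb B(s^*,t)\ge A_1$ almost surely, whereas for $c>0$ one has $\mathcal E=\{1/2\}$, $A_\infty=\mathbb B(s^*,1/2)$ and $\norm{\Phi_c}_\infty=1$ while $\norm{\Phi_c}_1,\norm{\Phi_c}_2\to0$ as $c\to\infty$.

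\textbf{The case $c=0$.} The inequality $Pow(1,0)>Pow(2,0)$ is then immediate, since $A_1=A_2$ share the same law and $q_{1-\alpha,1}<q_{1-\alpha,2}$ by \eqref{pb20}. For $Pow(2,0)>Pow(\infty,0)$ the effects $A_\infty\ge A_2$ (which favours the supremum test) and $q_{1-\alpha,\infty}>q_{1-\alpha,2}$ (which penalizes it) pull against each other, and I would argue that the inflation of the critical value prevails by comparing left tails: $1-Pow(2,0)$ is the exact Gaussian tail $\p\big(Z\le-(\sqrt n-q_{1-\alpha,2})/\sigma_2\big)$, with $Z$ standard normal and $\sigma_2^2=\V(A_2)$, whereas $1-Pow(\infty,0)=\p\big(\sup_t\mathbb B(s^*,t)\le -(\sqrt n-q_{1-\alpha,\infty})\big)$, which I would squeeze between Gaussian bounds — the Borell--TIS inequality from above, and from below a one-dimensional argument that splits $\mathbb B(s^*,\cdot)$ into its $L^2$-projection $A_2$ and an orthogonal remainder (or, alternatively, a Gaussian correlation estimate) — controlling the deviation of $\sup_t\mathbb B(s^*,t)$ around its nonnegative mean at the weak-variance scale $\sigma_\infty^2=\sup_t\V(\mathbb B(s^*,t))$; feeding $q_{1-\alpha,2}<q_{1-\alpha,\infty}$ into these estimates should show that the shifted left tail of $A_\infty$ strictly exceeds that of $A_2$. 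I expect this comparison to be the main obstacle: both tails are of Gaussian order in $\sqrt n-q_{1-\alpha,i}$, so the gap is carried by the sub-leading term and the bounds must be sharp enough to resolve $q_{1-\alpha,\infty}-q_{1-\alpha,2}>0$ (and to exclude the degenerate covariance structures in which the inequality would collapse to an equality).

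\textbf{The regime $c\to\infty$.} Here $\sqrt n\norm{\Phi_c}_1$ and $\sqrt n\norm{\Phi_c}_2$ tend to $0$. Since $A_1=\int_0^1\mathbb B(s^*,t)\,dt\le\norm{\mathbb B(s^*)}_1\le W_1$ almost surely and $W_1$ has a continuous law, $Pow(1,c)\to\p(A_1>q_{1-\alpha,1})\le\alpha$; and since $\big|\langle\mathbb B(s^*),\Phi_c/\norm{\Phi_c}_2\rangle\big|\le\norm{\mathbb B(s^*)}_\infty\,\norm{\Phi_c}_1/\norm{\Phi_c}_2\to0$ — using that $\mathbb B(s^*,\cdot)\in C([0,1])$ under the assumptions of Theorem~\ref{Thm:Alter} — and $q_{1-\alpha,2}>0$, we get $Pow(2,c)\to0$; hence $Pow(1,c)+Pow(2,c)\to p_1\le\alpha$. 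On the other hand $A_\infty=\mathbb B(s^*,1/2)$ and $\norm{\Phi_c}_\infty=1$ for every $c>0$, so $Pow(\infty,c)=\p\big(\mathbb B(s^*,1/2)>q_{1-\alpha,\infty}-\sqrt n\big)$ is constant in $c>0$, and since $\mathbb B(s^*,1/2)$ is centered Gaussian this quantity exceeds $\tfrac12>\alpha\ge p_1$ as soon as $\sqrt n>q_{1-\alpha,\infty}$, i.e.\ in the large-sample regime underlying \eqref{pb21}. By the continuity in probability of $c\mapsto A_2$ on $(0,\infty)$ it then follows that for all sufficiently large $c$ one has $Pow(1,c)+Pow(2,c)<Pow(\infty,c)$, which is the second display. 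The only care required here is the passage to the limit in $c$, which is routine given that continuity.
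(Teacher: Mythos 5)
Your route is essentially the paper's own: the corollary has no separate proof in Section \ref{sec7}, and the paper derives it from exactly the ingredients you assemble --- the boundary identifications at $c=0$ (where $\Phi_0\equiv 1$ gives $A_1=A_2\le A_\infty$ and all three norms equal $1$), the degeneration $\norm{\Phi_c}_1,\norm{\Phi_c}_2=o(1)$ with $\norm{\Phi_c}_\infty=1$ as $c\to\infty$ together with uniform subgaussianity of the $A_i$, and the quantile ordering \eqref{pb20}. Your handling of the second display is in fact more explicit than the paper's: the bound $Pow(1,c)\to\p(A_1>q_{1-\alpha,1})\le\alpha$ via $A_1\le W_1$, the observation that $\norm{\Phi_c}_1/\norm{\Phi_c}_2\to 0$ forces $Pow(2,c)\to 0$, and the explicit requirement $\sqrt n>q_{1-\alpha,\infty}$ (which the paper leaves implicit in the asymptotic reading of \eqref{pb21}) are all correct and sharpen what the paper only sketches. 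Likewise $Pow(1,0)>Pow(2,0)$ is handled exactly as in the paper.

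The step you flag as the main obstacle, $Pow(2,0)>Pow(\infty,0)$, is precisely the step the paper does not prove either: it asserts that $A_2\le A_\infty$ ``combined with'' $q_{1-\alpha,2}<q_{1-\alpha,\infty}$ yields the ordering, but, as you correctly note, these two facts pull in opposite directions, so the conclusion does not follow from them alone. Be aware, however, that your proposed repair (Borell--TIS plus a matching lower bound on the left tail of the supremum) cannot succeed by sharpening Gaussian estimates alone, because the claim is covariance-dependent in the large-$n$ regime. Writing $u_i=\sqrt n-q_{1-\alpha,i}$, one has $1-Pow(2,0)=\exp\{-u_2^2/(2\sigma_2^2)+o(n)\}$ with $\sigma_2^2$ the variance of $\int_0^1(\W(s^*,t)-s^*\W(1,t))\,dt$, while the Gaussian large deviation principle gives $1-Pow(\infty,0)=\exp\{-I_\infty u_\infty^2+o(n)\}$ with $I_\infty=\inf\{\tfrac12\norm{f}_H^2: f\le -1\ \text{everywhere}\}\ge 1/(2\sigma_2^2)$, and this inequality is strict unless $t\mapsto\int_0^1 K(t,s)\,ds$ is constant (only then does the unconstrained minimizer of the mean-constraint problem lie in the smaller feasible set). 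When it is strict, the exponent gain of order $n$ overwhelms the quantile gain $q_{1-\alpha,\infty}-q_{1-\alpha,2}$, which only contributes at order $\sqrt n$, and the inequality reverses. So this part of the corollary should be read as the same heuristic the paper intends; a rigorous version requires either a fixed-$n$ reading with additional structure or a homogeneity assumption on the long-run covariance, not merely sharper tail bounds.
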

This indicates that the $L^1$ norm is the best choice for dense signals while the supremum norm is superior for sparser alternatives. What happens in intermediate settings depends heavily on the long run covariance $C$ and is beyond the scope of this paper. The simulations in section \ref{sec6} will elucidate some of the possible outcomes. \\

\textbf{Power enhancement}
As we have just seen the test \eqref{btestclassic} based on the statistic $\hat T_n$ suffers from a lack of power against alternatives where the mean difference is tightly concentrated in a small area. We will alleviate this issue by means of a power enhancement component similar in spirit to that in \cite{Fan2015}. Let us recall their definition of such a component. We call a statistic $J$ a power enhancement component when it satisfies the three conditions below
\begin{enumerate}
    \item[I)] $J\geq 0$ almost surely.
    \item[II)] Under the null we have $J=0$ with high probability (or at least $J=o_\p(1)$).
    \item[III)] In some region of the alternative we have $J \rightarrow \infty$ in probability.
\end{enumerate}

Due to II) the asymptotic null distribution is not changed by considering $\hat T_n+J$ instead of $\hat T_n$. In the following we will propose a power enhancement component that  fulfills I) to III) and allows for a user specified probability of the failure of the condition $J=0$ under the null, i.e. a specification of the tolerated size distortion of the test. \\

We define for some sequence $\eta_n$
\begin{align}
    J_n=\sqrt{n}\norm{\U_n(\hat s, \cdot)}_\infty \1\{\sqrt{n}\norm{\U_n(\hat s, \cdot)}_\infty \geq \eta_n\}~.
\end{align}
and observe that under suitable assumptions one can show that 
\begin{align}
    \sqrt{n}\norm{\U_n(\hat s, \cdot)}_\infty \rightarrow \norm{\W(s^*,\cdot)}_\infty:=\mathbb{T}
\end{align}
This suggests choosing 
\begin{align}
    \eta_n=q_{1-\alpha_n}^J
\end{align}
where $q_{1-\alpha}^J$ is the $(1-\alpha)$-quantile of $\mathbb{T}$ and $\alpha_n=o(1)$ is a sequence that describes the maximum size distortion one is willing to suffer as a trade-off for the increased detection power for sparse alternatives. We record the theoretical properties of $J$ below.
\begin{theorem}
    \label{Thm:PowImp}
    Grant assumptions A1) and A2) for the supremum instead of the $L^1$ norm.  Further assume that \eqref{bandwidth} holds. Additionally we require that $X_{n,i} \in C([0,1])$ and that
    \begin{align}
        \E[|\epsilon_i(s)-\epsilon_i(t)|^2]^{1/2} \lesssim |s-t|^\alpha~.
    \end{align}
    for some $\alpha>1/2$.   Let $\alpha_n$ be any sequence tending to 0 and let $\eta_n=q^J_{1-\alpha_n}$, then the test
    \begin{align}
        \1\{\hat T_n+J_n \geq \hat q^*_{1-\rho}\}
    \end{align}
    is consistent and has asymptotic level $\rho$.
\end{theorem}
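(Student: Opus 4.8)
The plan is to deduce the theorem from Theorem~\ref{Thm:BootCons} by verifying that the enhancement term $J_n$ is asymptotically negligible under the null and is always nonnegative, so that it can neither alter the asymptotic level nor destroy consistency. The only genuinely new probabilistic input is tightness of the supremum-norm sequential sum process. Since the hypotheses here impose A1)--A2) for $\norm{\cdot}_\infty$ together with $\E[|\epsilon_i(s)-\epsilon_i(t)|^2]^{1/2}\lesssim|s-t|^\alpha$ with $\alpha>1/2$, and since $\norm{\cdot}_1\le\norm{\cdot}_\infty$ so that A1)--A4) for the $L^1$ norm are implied (A3) and A4) following from the H\"older bound and the resulting smoothness of $K(s,t)=\E[\epsilon_1(s)\epsilon_1(t)]$), Theorem~\ref{Thm:BootCons} applies verbatim to the $L^1$ statistic $\hat T_n$ and its bootstrap quantile $\hat q^*_{1-\rho}$, giving $\p(\hat T_n\ge\hat q^*_{1-\rho})\to\rho$ under $H_0$ and $\to 1$ under $H_1$.

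\emph{Step 1 (a strong invariance principle in $C([0,1])$).} I would first upgrade Theorem~\ref{Thm:L1Inv} to a strong approximation in $C([0,1])$: on a possibly enlarged space there is a $C([0,1])$-valued Brownian motion $\W$ with long-run covariance $C$ such that $\norm{\sum_{j\le t}\epsilon_j-\W(t)}_\infty\le t^{1/2-\gamma}$ for some $\gamma>0$. The route is standard for mixing sequences: use Berbee-type coupling to replace the $\beta$-mixing data by independent blocks under the rate constraint \eqref{bandwidth}, apply a finite-dimensional strong approximation to the block sums sampled on a fine grid of $[0,1]$, and bridge grid to continuum by chaining, where $\alpha>1/2$ supplies summable moduli of continuity for both the partial sums and the Gaussian majorant so that the chaining remainder is $o(n^{1/2-\gamma})$. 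This yields $\{\sqrt n\,\U_n(s,\cdot)\}_{s}\overset{d}{\rightarrow}\{\W(s)-s\W(1)\}_{s}$ in $C([0,1],C([0,1]))$ under $H_0$ and hence, by the continuous mapping theorem, $\hat T_n^{(\infty)}=\sqrt n\sup_{s\in[0,1]}\norm{\U_n(s,\cdot)}_\infty\overset{d}{\rightarrow}W_\infty$; in particular $\hat T_n^{(\infty)}=O_\p(1)$.

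\emph{Step 2 (level and consistency).} Under $H_0$ the term $J_n$ is nonzero only on $\{\sqrt n\norm{\U_n(\hat s,\cdot)}_\infty\ge\eta_n\}$, and trivially $\sqrt n\norm{\U_n(\hat s,\cdot)}_\infty\le\hat T_n^{(\infty)}$. Since $\mathbb{T}$ is the supremum of a non-degenerate Gaussian process, hence a.s. finite with unbounded support, its $(1-\alpha_n)$-quantile $\eta_n=q^J_{1-\alpha_n}$ tends to $\infty$ as $\alpha_n\to 0$; by Step~1, $\p(J_n\ne0)\le\p(\hat T_n^{(\infty)}\ge\eta_n)\to0$. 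Consequently the events $\{\hat T_n+J_n\ge\hat q^*_{1-\rho}\}$ and $\{\hat T_n\ge\hat q^*_{1-\rho}\}$ differ with probability at most $\p(J_n\ne0)=o(1)$, so $\p(\hat T_n+J_n\ge\hat q^*_{1-\rho})=\p(\hat T_n\ge\hat q^*_{1-\rho})+o(1)\to\rho$ by Theorem~\ref{Thm:BootCons}. For consistency, under $H_1$ the continuous function $\Phi=\mu^{(1)}-\mu^{(2)}$ is nonzero so $\norm{\Phi}_1>0$, and Theorem~\ref{Thm:Alter} gives $\hat T_n-\sqrt n\norm{\Phi}_1\to A_1$, hence $\hat T_n\to\infty$ in probability, while $\hat q^*_{1-\rho}=O_\p(1)$ because the bootstrap is built from the change-adjusted residuals $\hat Y_{n,i}$ (as in the consistency part of Theorem~\ref{Thm:BootCons}); since $J_n\ge0$ we get $\p(\hat T_n+J_n\ge\hat q^*_{1-\rho})\ge\p(\hat T_n\ge\hat q^*_{1-\rho})\to1$.

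\emph{Main obstacle.} The only step requiring real work is Step~1: pushing the strong invariance principle from $L^1$ to $C([0,1])$ for merely $\beta$-mixing data. The delicate points are the block-coupling bookkeeping compatible with the mixing rate in \eqref{bandwidth} and verifying that the chaining remainder for both the partial-sum process and its Gaussian counterpart is $o(n^{1/2-\gamma})$, which is precisely where $\alpha>1/2$ enters. Everything downstream — the negligibility of $J_n$, the set-difference bound, and the invocations of Theorems~\ref{Thm:BootCons} and \ref{Thm:Alter} — is routine; if one is content with a weak invariance principle (which already gives $\hat T_n^{(\infty)}=O_\p(1)$), the chaining can be replaced by a tightness argument and $\alpha>1/2$ can likely be relaxed.
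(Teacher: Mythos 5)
Your proposal is correct and follows essentially the same route as the paper: the paper likewise reduces everything to Theorem \ref{Thm:BootCons} together with the nonnegativity of $J_n$ and the fact that $J_n=0$ with high probability under $H_0$, the latter obtained from a $C([0,1])$-valued strong invariance principle proved by verifying the entropy condition of Theorem 6 in Dehling (1983) via the maximal inequality of van der Vaart--Wellner (Theorem 2.2.4) --- i.e.\ exactly the chaining step you identify as the main obstacle, with the blocking/coupling bookkeeping delegated to Dehling's general theorem rather than built from scratch. The only detail you gloss over that the paper makes explicit is the verification that A3) and A4) for the $L^1$ norm are implied by the present hypotheses: the $C([0,1])$ invariance principle shows $\epsilon_1$ is pregaussian (giving A3)), and the Gaussian limit inherits the H\"older bound on the increments (giving A4)).
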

\begin{remark}
{\rm

    \begin{enumerate}\leavevmode
        \item[i)] The restriction $\alpha>1/2$ can be dropped at the cost of making assumptions A1) and A2) slightly stronger. We omit this for the sake of a parsimonious presentation.
        \item[ii)] One can extend this result to piecewise continuous functions with deterministic locations of discontinuity in a straightforward manner.
        \item[iii)] The quantiles $q^J_{1-\alpha_n}$ can be accessed by means of a bootstrap procedure, to be precise we use  the same set up as in \eqref{bootdefclassic} but take the supremum norm instead of the $L^1$ norm.
        \item[iv)] It is natural to ask why one should not just construct a test based on an appropriately rescaled and aggregated test statistic, say of of the form
        \begin{align}
        \label{aggregated}
            \sqrt{n}\max_{i \in \{1,\infty\}}\norm{U_n(\hat s,\cdot)}_i~.
        \end{align}
        The problem in this case is that, different from situations where limits are normally distributed, there is no natural way to standardize the statistics that are aggregated. As the supremum norm of a function is never smaller than its $L^1$ norm the quantiles of the aggregated statistic are entirely determined by the supremum part when no rescaling is applied. As we will see in section \ref{sec6} the bootstrap based on the supremum norm is particularly sensitive to heavy tails and generally performs worse than the $L^1$ norm methodology except in the case of alternatives with sharp (spatial) spikes in the signal. Consequently we advise against using an aggregated statistic of the form \eqref{aggregated} unless one is able to determine a reasonable way to standardize its components.         
    \end{enumerate}
    }
\end{remark}

Let us now consider a sequence of alternatives $H_{1,n}$ against which the original test based on $\hat T_n$ alone lacks power. To that end let
\begin{align}
    \mu^{(1)}(t)=0, \quad \mu^{(2)}(t)=\1\{t \in [0,\beta_n]\}~.
\end{align}
A simple calculation shows that 
\begin{align}
    \norm{\mu^{(1)}-\mu^{(2)}}_1=\beta_n, \quad   \norm{\mu^{(1)}-\mu^{(2)}}_\infty=1
\end{align}
Now if $\beta_n\sqrt{n}\rightarrow 0$ it is easy to see that $\hat T_n$ will still converge to $\norm{\W(s)-s\W(1)}_{\infty,1}$. On the other hand taking $\alpha_n=1/n$ and using the fact that norms of Banach valued Gaussians are Subgaussian (see equation 3.2 in \cite{ledoux1991}) we know that $\eta_n \lesssim \sqrt{\log(n)}$ so that $J_n \rightarrow \infty$ in probability.     \\

In other words: With the power enhancement component we are able to detect changes that happen only in a small spatial region that the original test is not able to detect. The above example is prototypical in the sense that any mean difference concentrated on a small spatial set whose supremum vanishes slowly enough will be detected by $\hat T_n+J_n$ but not by $\hat T_n$ alone. We also demonstrate this empirically in the next section.

\section{Finite Sample Performance}
\label{sec6}
In this section we investigate the finite sample performance of the proposed methodology. In the first subsection we will compare the performances of $L^1,L^2$ and supremum norm methodology on synthetic data sets for independent and dependent data. In the second subsection we investigate the two bootstrap procedures we proposed for the case of relevant hypotheses. Here a direct comparison of the $L^1$ methodology with either the $L^2$ or supremum norm methodology makes little sense as the null hypotheses 
\begin{align}
    H_0(\Delta)^1&=\norm{\mu^{(1)}-\mu^{(2)}}_1\leq \Delta\\
    H_0(\Delta)^2&=\norm{\mu^{(1)}-\mu^{(2)}}_2\leq \Delta\\
    H_0(\Delta)^\infty&=\norm{\mu^{(1)}-\mu^{(2)}}_\infty \leq \Delta
\end{align}
are incomparable. In the third section we apply the proposed methodology to ... 
\subsection{Norm Comparison for the Classical Hypotheses}
In this section we will consider both light and heavy tailed processes. In the independent case we consider
\begin{align}
\label{light} \epsilon_{i,l}&=B_i\\
\label{heavy}    \epsilon_{i,h}&=\sum_{k=1}^{10} f_i t_{ik}
\end{align}
$n \in \{100,200\}$ is the sample size, $B_i$ are iid brownian motions on $[0,1]$, $(t_{ik})$ are an array of iid t distributions with 3 degrees of freedom and $(f_i)_{i=1,...,10}$ is the bspline basis as given in the R package "fda" \cite{fdaR}. In the dependent case we follow \cite{Aue2017} and consider first order functional autoregressions 
\begin{align}
\label{lightdep}     \epsilon_{i,l,d}&=\Psi \epsilon_{i-1,l,d}+\zeta_{i,l}\\
\label{heavydep}     \epsilon_{i,h,d}&=\Psi \epsilon_{i-1,h,d}+\zeta_{i,h}
\end{align}
where the innovation processes are given by
\begin{align}
    \zeta_{i,l}=\sum_{k=1}^{21}N_{i,k}v_kk^{-1}\\
    \zeta_{i,h}=\sum_{k=1}^{21}T_{i,k}v_kk^{-1}
\end{align}
and $N_{i,k}$ and $T_{i,k}$ are given by iid standard normal and t distributions with 3 degrees of freedom, respectively. $v_1,...,v_{21}$ are fourier basis functions on the interval $[0,1]$ while the operator $\Psi$ is given by $\Psi= \Psi_0/\sqrt{2}$ with $\Psi_0$ a $21\times 21$ dimensional matrix whose entries are given by iid standard normals with variances given by $((ij)^{-1})_{1 \leq i,j \leq 21}$. Note that the choice of $\sqrt{1/2}$ as a multiplier for $\Psi_0$ yields a rather strong temporal dependence in the data, this is illustrated by the fact that the estimated bandwidth $l_n$ is typically in the range 6-8 for a sample size of 100.\\
Further we define $s^*=1/2$ and let
\begin{align}
    \mu^{(1)}=0
\end{align}
whereas $\mu^{(2)}$ is given by one of the following choices
\begin{empheq}[left={\mu^{(2)}(t)=\kappa}\empheqlbrace]{align}
    & 0 \label{null} \\ 
    & 1 \label{const} \\ 
    & \sin(\pi t) \label{bump} \\ 
    & \sin(4\pi t) \label{bumps} \\ 
    & 2\exp\big(-100(t-0.5)^2\big) \label{spike}
\end{empheq}

We compare the following three bootstrap procedures: \eqref{btestclassic} for the $L^1$ norm, a multiplier version of the test from \cite{Sharipov2016} for the $L^2$ norm and the test from \cite{dette2020} for the supremum norm, i.e. all three tests are based on norms of the cusum process $\U_n$ (or $\U_n^*$ for the bootstrapped versions). For the choice  of the block length $l_n$ we use the method from \cite{Rice2017} with the recommended quadratic spectral kernel. We display the empirical rejection probabilities for choice $\kappa=0.2$ based on a 1000 bootstrap runs with 200 bootstrap repetitions each in tables \ref{tab1} and \ref{tab2} below.

\begin{table}[H]
	\centering 
	\begin{tabular}{|c|c|c|c|c|c|c|c|c|c|c|}
	    \hline
        \multicolumn{6}{|c|}{Light Tails} & \multicolumn{5}{|c|}{Heavy Tails} \\
        \hline
	$ \mu^{(2)} $ & \eqref{null} & \eqref{const} & \eqref{bump} &\eqref{bumps} & \eqref{spike} & \eqref{null} & \eqref{const} & \eqref{bump} &\eqref{bumps} & \eqref{spike}\\
		\hline            
	 $\norm{\cdot}_1$ & 0.043 &  0.383& 0.180 & 0.070 & 0.080 & 0.028 &  0.252& 0.138 & 0.061 & 0.092 \\
	 $\norm{\cdot}_2$  & 0.046 & 0.295 & 0.155 & 0.084 & 0.099 & 0.023 & 0.200 & 0.106 & 0.062 & 0.093   \\   
      $\norm{\cdot}_\infty$ & 0.041 & 0.176 & 0.096 & 0.178 & 0.282 & 0.026 & 0.044 & 0.022 & 0.036 & 0.046 \\   
		\hline
		\hline
	 $\norm{\cdot}_1$ & 0.049 & 0.646 & 0.301 & 0.090 & 0.143 & 0.034 & 0.505 & 0.260 & 0.138 & 0.162 \\
	 $\norm{\cdot}_2$ & 0.055 & 0.575 & 0.267 & 0.143 & 0.224 & 0.029 & 0.412 & 0.226 & 0.142 & 0.191   \\   
      $\norm{\cdot}_\infty$ & 0.047 & 0.361 & 0.194 & 0.323& 0.664 & 0.030 & 0.150 & 0.056 & 0.059 & 0.116\\   	 	 
		 \hline
	\end{tabular}
	\smallskip
	
	\caption{Empirical Rejection Rates of the bootstrap tests  for the hypotheses \eqref{classhyp} based on different norms for independent data. The error processes are given by \eqref{light} in the light tailed and by \eqref{heavy} in the heavy tailed case. The upper part of the table contains the results for sample size $n=100$, lower part contains the results for sample size $n=200$. In both cases $\kappa=0.2$.}
	\label{tab1}
\end{table}
Let us first consider the setting of independent errors. For the light tailed data we observe that all three tests keep the nominal level and generally behave as predicted by Theorem \ref{Thm:BootCons}. The $L^1$ norm outperforms the other two contenders for the denser alternatives \eqref{const} and \eqref{bump} whereas the supremum norm is the better choice for alternatives \eqref{bumps} and \eqref{spike} that are sparser and spikier. The test based on the $L^2$ method always lies between the two other choices but never outperforms them. This is consistent with the discussion in Section \ref{sec5}. \\
For heavy tailed data the picture changes substantially,  all three tests are slightly conservative and achieve a level of $~0.025-0.030$ instead of the nominal level $0.050$. The $L^1$ method outperforms its competitors across all alternatives except for the choices $\eqref{bumps}$ and $\eqref{spike}$ where the $L^2$ norm achieves a comparable performance and even slightly outperforms the $L^1$ norm for the alternative \eqref{spike} when $n=200$. The performance of the supremum methodology deteriorates drastically in the heavy tailed regime, performing worse than the integral norms even for its most favorable alternative \eqref{spike}.

\begin{table}[H]
	\centering 
	\begin{tabular}{|c|c|c|c|c|c|c|c|c|c|c|}
	    \hline
        \multicolumn{6}{|c|}{Light Tails} & \multicolumn{5}{|c|}{Heavy Tails} \\
        \hline
	$ \mu^{(2)} $ & \eqref{null} & \eqref{const} & \eqref{bump} &\eqref{bumps} & \eqref{spike} & \eqref{null} & \eqref{const} & \eqref{bump} &\eqref{bumps} & \eqref{spike}\\
		\hline            
    	 $\norm{\cdot}_1$ & 0.055 &  0.179& 0.101 & 0.055 & 0.076 & 0.060 &  0.085& 0.049 & 0.051 & 0.058 \\
	 $\norm{\cdot}_2$  & 0.053 & 0.161 & 0.084 & 0.055 & 0.077 & 0.051 & 0.072 & 0.055 & 0.051 & 0.042   \\   
      $\norm{\cdot}_\infty$ & 0.039 & 0.100 & 0.068 & 0.073 & 0.088 & 0.048 & 0.044 & 0.044 & 0.042 & 0.060 \\   
		\hline
		\hline
	 $\norm{\cdot}_1$ & 0.045 & 0.334 & 0.148 & 0.065 & 0.085 & 0.048 & 0.132 & 0.083 & 0.056 & 0.039 \\
	 $\norm{\cdot}_2$ & 0.043 & 0.286 & 0.133 & 0.068 & 0.099 & 0.051 & 0.114 & 0.083 & 0.057 & 0.039   \\   
      $\norm{\cdot}_\infty$ & 0.042 & 0.166 & 0.105 & 0.121 & 0.173 & 0.044 & 0.074 & 0.063 & 0.058& 0.048\\   	 	 
		 \hline
	\end{tabular}
	\smallskip
	
	\caption{Empirical Rejection Rates of the bootstrap tests  for the hypotheses \eqref{classhyp} based on different norms for dependent data. The error processes are given by \eqref{lightdep} in the light tailed and by \eqref{heavydep} in the heavy tailed case. The upper part of the table contains the results for sample size $n=100$, lower part contains the results for sample size $n=200$. In both cases $\kappa=0.2$.}
	\label{tab2}
\end{table}

For dependent data the conclusions are the same with some minor differences. The main one being that all three procedures closely approximate the nominal level even for heavy tails. The power is substantially lower than in the independent setting, which is unsurprising considering the rather large temporal dependence induced by the multiplier $\sqrt{1/2}$ used in the definition of $\Psi$. \\

As a final consideration we investigate the power enhancement procedure defined in Theorem \ref{Thm:PowImp} for light tailed, independent data and the choice $\alpha_n=0.01$. We omit the heavy tailed case because, as demonstrated above, the supremum norm will not contribute to the empirical rejection rate in this case. We focus on the independent case because in the dependent setting we defined the power is generally low and it is hard to distinguish increases in performance and random fluctuations from one another. Similar effects as for the independent case can be observed when increasing the factor 0.2 and 0.4 in the definitions of the alternatives \eqref{const} to \eqref{spike}. We increase the number of bootstrap repetitions for each run from 200 to 1000 to ensure that the $0.01$-quantile of the supremum norm statistic is properly approximated.  The results are summarized in table \ref{tab3} .

\begin{table}[H]
	\centering 
	\begin{tabular}{|c|c|c|c|c|c|c|c|c|c|c|}
	    
        \hline
	$ n $ & \eqref{null} & \eqref{const} & \eqref{bump} &\eqref{bumps} & \eqref{spike} \\
		\hline            
    	 100 & 0.038 &  0.361& 0.175 & 0.078 & 0.114 \\
	 200  & 0.050 & 0.592 & 0.287 & 0.113 & 0.325 \\
		 \hline
	\end{tabular}
	\smallskip
	
	\caption{Empirical Rejection Rates of the $L^1$ bootstrap procedure with power enhancement for the hypotheses \eqref{classhyp}. The error processes are given by \eqref{light}.}
	\label{tab3}
\end{table}

We observe that the approximation of the nominal level is not visibly impacted by the addition of the power enhancement component, similarly the rejection rates for the alternatives \eqref{const} and \eqref{bump} are not visibly impacted. The rejection rates for the alternatives \eqref{bumps} and \eqref{spike} both show an increase that is particularly noticeable for \eqref{spike} which is consistent with the observation that the supremum norm based method performs best for these alternatives as long as the tails are light.\\

\textbf{Recommendation:}
Based on the observations in this section we recommend using the $L^1$ norm methodology as the default option. Among the compared procedures it achieves the best performance under heavy tailed data, performs best for dense alternatives under light tails and one can safeguard against sparse alternatives with light tails via the power improvement component we proposed. In cases where one expects light tails and sparse signals one should use the supremum norm methodology instead.

\subsection{Relevant Hypotheses: Synthetic Data}
We adopt the notation from the previous subsection. As comparing relevant hypotheses for different norms makes little sense (hypotheses of the form \eqref{h1} are neither equivalent nor meaningfully nested when varying the choice of norm) we focus on comparing the three bootstrap procedures \eqref{boot1}-\eqref{boot3} we proposed in section \ref{sec4}. To that end we will again consider $\mu^{(1)}=0$ and $\mu^{(2)}$ given according to equations \eqref{null}-\eqref{spike}, this time for the choice $\kappa =0.4$. We then determine, for each choice of $\mu^{(2)}$ two choices of $\Delta$. More precisely we let
\begin{align}
    \Delta_0&=\norm{\mu^{(1)}-\mu^{(2)}}_1\\
    \Delta_1&=\norm{\mu^{(1)}-\mu^{(2)}}_1/2~,
\end{align}
so that $\Delta_0$ corresponds to testing on the boundary $\Delta=d_1(\kappa)$ and $\Delta_1$ corresponds to testing where the alternative holds. $\Delta_1$ and $\kappa$ are chosen in this way so that the distance to the null is equal to the distance to the null in the alternatives we considered in the classical case. We record the resulting empirical rejection rates for sample sizes $n=100,200, 500$ and independent data (both light and heavy tailed) in tables \ref{tab4} and \ref{tab5} below. To estimate $\mathcal{N}$ we slightly modify $\hat{\mathcal{N}}$ to adjust for the spatially varying noise level, i.e. we use the set estimator
\begin{align}
    \Big\{ t \in [0,1] | |\hat d(t)| \leq \hat \sigma(t)\frac{\log(n)}{\sqrt{n}}\Big\}
\end{align}
instead, here $\hat \sigma(t)$ is the square root of the sample variance of $(X_i(t))_{i=1,...,n}$. As in the previous section the results are based on a 1000 bootstrap runs with 200 bootstrap repetitions each. Similar results hold for dependent data which are omitted for the sake of brevity.

\begin{table}[H]
	\centering 
	\begin{tabular}{|c|c|c|c|c|c|c|c|c|c|c|}
	    \hline
        \multicolumn{5}{|c|}{Light Tails} & \multicolumn{4}{|c|}{Heavy Tails} \\
        \hline
	Statistic  & \eqref{const} & \eqref{bump} &\eqref{bumps} & \eqref{spike} & \eqref{const} & \eqref{bump} &\eqref{bumps} & \eqref{spike} \\
		\hline     
    	\eqref{boot1} &  0.102 & 0.128 & 0.012 & 0.180 &  0.041& 0.063 & 0.017 & 0.142 \\
	 \eqref{boot2} &0.102 & 0.128 & 0.052 & 0.282 & 0.097 & 0.215 & 0.218 & 0.648   \\   
      \eqref{boot3}&  0.045 & 0.063 & 0.002 & 0.101 & 0.009 & 0.020 & 0.003 & 0.061 \\ 
      \hline	
      \hline     
    	\eqref{boot1} &  0.077 & 0.105 & 0.004 & 0.174 &  0.041 & 0.048 & 0.010 & 0.114 \\
	 \eqref{boot2} & 0.077 & 0.106 & 0.034 & 0.279 & 0.077 & 0.162 & 0.147 & 0.599   \\   
      \eqref{boot3}&  0.043 & 0.054 & 0.001 & 0.088 & 0.007 & 0.010 & 0.000 & 0.045\\ 
      \hline	
      \hline     
    	\eqref{boot1} &  0.072 & 0.101 & 0.000 & 0.124 &  0.057 & 0.051 & 0.010 & 0.082 \\
	 \eqref{boot2} &0.072 & 0.101 & 0.036 & 0.215 & 0.068 & 0.115 & 0.112 & 0.500   \\   
      \eqref{boot3}&  0.037 & 0.049 & 0.000 & 0.067 & 0.012 & 0.010 & 0.000 & 0.043 \\ 
      \hline	
	\end{tabular}
	\smallskip
	
	\caption{Empirical Rejection Rates of the bootstrap tests  for the hypotheses \eqref{h1} based on the bootstrap statistics \eqref{boot1}-\eqref{boot3}. The error processes are given by \eqref{light} in the light tailed and by \eqref{heavy} in the heavy tailed case. The upper part of the table contains the results for sample size $n=100$, the middle part for sample size $n=200$ and the lower part contains the results for sample size $n=500$. In all cases $\kappa=0.4$ and $\Delta$ is chosen such that $\Delta=d_1$, i.e. we are on the boundary of the null hypothesis.}
	\label{tab4}
\end{table}

Regarding table \ref{tab4}, i.e. the empirical sizes, we observe that only the method based on \eqref{boot3} keeps the nominal level. The performance of \eqref{boot1} improves with increasing sample size but nonetheless exceeds the nominal level in some cases. This phenomenon can be explained by the fact that the method \eqref{boot1} relies on the delicate task of estimating the set $\mathcal{N}$ which in turn relies on good estimation of the true change point. Change point estimation based on the $L^1$ norm performs rather poorly when the signal is spiky, which is reflected by the fact that \eqref{boot1} performs especially poorly in the setting \eqref{spike}.  The method based on the bootstrap statistic \eqref{boot2} exceeds the nominal level significantly even though $\lambda(\mathcal{N})=0$ in all cases we consider. This is not particularly surprising as the mean differences in the settings \eqref{bump}-\eqref{spike} contain sets of positive measure where $d_1(t)$ is very close to 0 compared to the magnitude of the noise processes \eqref{light} and \eqref{heavy} which leads to significant finite sample bias for $\hat T_{n,\Delta}$. Our recommendation is therefore as follows: For small and moderate sample sizes one should use the methodology based on \eqref{boot3} while for large sample sizes (or heavy tailed data) usage of the method based on \eqref{boot1} is advisable as the gains in power compared to \eqref{boot3} are quite substantial (see table \ref{tab5}).

\begin{table}[H]
	\centering 
	\begin{tabular}{|c|c|c|c|c|c|c|c|c|c|c|}
	    \hline
        \multicolumn{5}{|c|}{Light Tails} & \multicolumn{4}{|c|}{Heavy Tails} \\
        \hline
	 Statistic   & \eqref{const} & \eqref{bump} &\eqref{bumps} & \eqref{spike} & \eqref{const} & \eqref{bump} &\eqref{bumps} & \eqref{spike} \\
		\hline     
    	\eqref{boot1} & 0.514 & 0.359 & 0.256 & 0.436 & 0.500 & 0.355 & 0.258 & 0.414  \\
	 \eqref{boot2} & 0.768 & 0.763 & 0.920 & 0.962 & 0.753 & 0.774 & 0.926 & 0.966    \\   
      \eqref{boot3}&  0.352 & 0.235 & 0.076 & 0.289 &  0.346 & 0.243 & 0.093 & 0.268\\ 
      \hline	
      \hline     
    	\eqref{boot1} &  0.709 & 0.478 & 0.430 & 0.513& 0.718 & 0.487 & 0.418 & 0.521 \\
	 \eqref{boot2} & 0.871 & 0.798 & 0.962 & 0.980 & 0.870 & 0.801 & 0.972 & 0.980 \\   
      \eqref{boot3}&  0.566 & 0.351 & 0.213 & 0.359 & 0.577 & 0.368 & 0.212 & 0.386\\ 
      \hline	
      \hline     
    	\eqref{boot1} &  0.978 & 0.760 & 0.837 & 0.742  & 0.969 & 0.741 & 0.878 & 0.749\\
	 \eqref{boot2} &   0.991 & 0.927 & 1.000 & 0.996 & 0.988 & 0.929 & 0.997 & 0.989\\   
      \eqref{boot3}&  0.927 & 0.643 & 0.641 & 0.591 & 0.925 & 0.647 & 0.629 & 0.626\\ 
      \hline	
	\end{tabular}
	\smallskip
	
	\caption{Empirical Rejection Rates of the bootstrap tests  for the hypotheses \eqref{h1} based on the bootstrap statistics \eqref{boot1}-\eqref{boot3}. The error processes are given by \eqref{light} in the light tailed and by \eqref{heavy} in the heavy tailed case. The upper part of the table contains the results for sample size $n=100$, the middle part for sample size $n=200$ and the lower part contains the results for sample size $n=500$. In all cases $\kappa=0.4$ and $\Delta$ is chosen such that $\Delta=d_1/2$, i.e. we are in the alternative.}
	\label{tab5}
\end{table}

\subsection{Real Data Application}
Just as \cite{dette2020} we follow \cite{Fremdt2014} and consider annual temperature curves of daily minimum temperatures from Melbourne, Australia. This yields 156 yearly temperature curves for the time 1856-2011. We proceed analogously to the synthetic data and estimate the bandwidth $l_n$ by the method from \cite{Rice2017} which yields $l_n=7$.\\

We detect a change point at $\hat s=0.67$ which corresponds to the year 1960 and the null hypothesis of equal means is rejected with a $p$ value below 0.01. To gain more insight we also consider the relevant hypotheses \eqref{h1} and use  the method in remark \ref{r1} to find a maximal $\Delta$ for which we still reject the null at level 0.05, we focus on the bootstrap procedure \eqref{boot3} as the sample size is moderate (see the discussion in the preceding subsection, we remark that the results for the other two procedures only differ by 0.1 degrees). We reject $H_0(\Delta)$ for all $\Delta<1.175$. In this context the  $L^1$ norm represents an average absolute mean minimum temperature difference, plotting the estimators (see figure \ref{Fig:3}) of $\mu^{(1)}$ and $\mu^{(2)}$ suggests that the temperature shift is purely upwards which in turn suggests that we have strong evidence for a mean minimum temperature shift of up to $1.175$ degrees Celsius. \\

\begin{figure}[H]
 
    \includegraphics[scale= 0.66]{./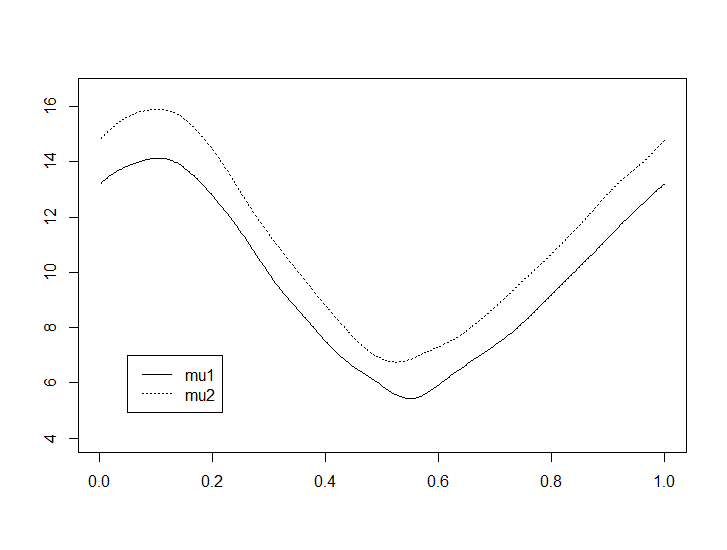}       
    
    \caption{\it  Plots of the mean estimator before (mu1) and after(mu2) the estimated change point for the Melbourne temperature time series. 
 }
    \label{Fig:3}
\end{figure}

Compared to the results in \cite{dette2020} who investigated \eqref{h1} with the supremum norm instead of the $L^1$ norm we note that the mean minimum temperature shift and the maximal minimum temperature shift are very close (they detected a maximal shift of  roughly $1.3$ degrees Celsius but used the smaller bandwidth $l_n=1$, we detect a mean shift of 1.27 degrees for this bandwidth) with the mean shift lagging slightly behind the maximum shift.  \\

In summary we observe strong evidence for a mean temperature shift in Melbourne that is of roughly the same size and lagging slightly behind the maximal temperature shift of about 1.3 degrees Celsius detected in \cite{dette2020}.\\

\textbf{Acknowledgements}
This research was funded in the course of TRR 391 Spatio-temporal Statistics for the Transition of Energy and Transport (520388526) by the Deutsche Forschungsgemeinschaft (DFG, German Research Foundation).

\pagebreak

\section{Proofs}
\label{sec7}
In this section we will often write, for a function $f \in C([0,1],L^1)$, the evaluation $(f(s))(t)$ as $f(s,t)$ to keep expressions readable.\\
When considering the setting \eqref{setting} we also assume that $\mu^{(1)}=0$ which can be achieved by a simple centering of the data which does not impact any of the results or proofs below except for making them easier to read. \\
Additionally all sums $\sum_{k=i_1}^{i_2}$ with $i_2<i_1$ are understood to be empty, i.e. equal to 0. For two sequences $a_n$ and $b_n$ we write
\begin{align}
    a_n \lesssim b_n
\end{align}
whenever $a_n \leq cb_n$ for some $c>0$ that does not depend on $n$.\\

\subsection{Some Facts about Cotype 2 Spaces}\hfill\\
We denote for a pregaussian $X \in L^1$ the associated Gaussian with covariance equal to that of $X$ by $G_X$. For the convenience of the reader we record a number of Lemmas that will be useful for the other proofs, they are either taken directly from \cite{ledoux1991} or are immediate consequences of the results therein.

\begin{Lemma}
\label{Lemma:Conc}
    Let $X$ be pregaussian and let $Y$ be a tight mean zero random variable in some Banach space $B$.  Suppose that for every $f \in B^*$ we have $\E[f(Y)^2]\leq \E[f(X)^2]$. Then $Y$ is also pregaussian and we have
    \begin{align}
        \E[\norm{G_Y}_B^p] \lesssim \E[\norm{G_X}_B^p]
    \end{align}
    for all $p>0$. 
\end{Lemma}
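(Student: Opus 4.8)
The plan is to use the comparison of Gaussian processes together with the fact that $L^1$ (and more generally any cotype 2 space) is a space in which a Gaussian comparison principle for moments of norms is available. First I would observe that the hypothesis $\E[f(Y)^2] \le \E[f(X)^2]$ for all $f \in B^*$ is exactly the statement that the covariance operator $T_Y$ of $Y$ is dominated by that of $X$ in the usual order on symmetric positive operators on $B^* \times B^*$. Since $X$ is assumed pregaussian, this domination is precisely the condition under which one concludes $Y$ is pregaussian as well: this is a standard consequence of the theory on pages 260--261 of \cite{ledoux1991} (every "smaller" covariance than a pregaussian one is again pregaussian, because the relevant random Fourier / majorizing measure construction is monotone in the covariance). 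So the pregaussianity of $Y$ is essentially quoted.

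For the moment bound, the key tool is the Gaussian comparison: if $G_X$ and $G_Y$ are the centered Gaussian random variables with covariances $T_X$ and $T_Y$, and $T_Y \le T_X$, then for the real-valued Gaussian processes $\{f(G_Y): f \in B_1^*\}$ and $\{f(G_X): f \in B_1^*\}$ (indexed by the unit ball $B_1^*$ of the dual) one has $\E[(f(G_Y)-g(G_Y))^2] \le \E[(f(G_X)-g(G_X))^2]$ for all $f,g$; here one uses that $\norm{G_Y}_B = \sup_{f \in B_1^*} f(G_Y)$ (with a countable dense subset to handle measurability, since $B$ is separable). By Slepian's / Sudakov--Fernique's inequality this gives $\E[\sup_{f} f(G_Y)] \le \E[\sup_f f(G_X)]$, i.e. $\E[\norm{G_Y}_B] \le \E[\norm{G_X}_B]$, which is the case $p=1$. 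To upgrade to all $p>0$ I would invoke the integrability properties of norms of Gaussian vectors in a Banach space: by the Gaussian concentration / Landau--Shepp--Fern
ernique estimates (equation 3.2 and the surrounding discussion in \cite{ledoux1991}), all moments $\E[\norm{G}_B^p]$ of a Banach-space Gaussian vector are equivalent to each other and are controlled by the first moment together with the weak variance $\sigma(G)^2 = \sup_{f \in B_1^*}\E[f(G)^2]$; and the weak variance of $G_Y$ is trivially bounded by that of $G_X$ by hypothesis. Combining $\E[\norm{G_Y}_B] \le \E[\norm{G_X}_B]$ with $\sigma(G_Y) \le \sigma(G_X)$ and the moment-comparison inequalities yields $\E[\norm{G_Y}_B^p] \lesssim \E[\norm{G_X}_B^p]$ for every $p > 0$, with a constant depending only on $p$.

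The main obstacle, such as it is, is not any single hard estimate but rather assembling the right citations and being careful about two technical points: (i) that $\norm{\cdot}_B$ is realized as a countable supremum of linear functionals so that Sudakov--Fernique applies and all suprema are measurable, which is fine because the spaces in play ($L^1[0,1]$, $C([0,1])$) are separable; and (ii) that the passage from the $p=1$ bound to general $p$ genuinely needs the Gaussian integrability theorem rather than just the comparison inequality, since Sudakov--Fernique by itself only compares expectations of suprema, not higher moments. Once these are in place the proof is short. Note the statement as used elsewhere in the paper (e.g. in controlling bootstrap Gaussian approximations) only ever needs $p=1$ or $p=2$, so even a reader unwilling to chase the full moment-equivalence machinery loses nothing essential.
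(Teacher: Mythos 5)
Your proposal is correct and matches the paper's treatment: the paper states Lemma \ref{Lemma:Conc} without proof, as one of the facts "taken directly from \cite{ledoux1991} or immediate consequences of the results therein," and the argument you sketch (covariance domination preserves pregaussianity, Sudakov--Fernique on the dual unit ball for the first-moment comparison $\E[\norm{G_Y}_B]\leq \E[\norm{G_X}_B]$, then the equivalence of moments of norms of Banach-space Gaussians to reach all $p>0$) is exactly the standard route underlying that citation. Your two technical caveats -- realizing $\norm{\cdot}_B$ as a countable supremum over a weak\textsuperscript{*}-dense subset of the dual ball, and noting that higher moments require the Gaussian integrability/moment-equivalence results rather than the comparison inequality alone -- are the right ones, and the only part you leave quoted (that a covariance dominated by a pregaussian one is pregaussian) is precisely the part the paper also quotes.
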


\begin{Lemma}
\label{Lemma:Cotype}
Let $B$ be a Banach space of cotype 2 and let $X \in B$ be a tight mean zero random variable that is pregaussian with associated Gaussian $G_X$. Then
\begin{align}
    \E[\norm{X}^2 ]\leq C\E[\norm{G_X}^2]
\end{align}
for some $C>0$ that only depends on $B$. Conversely, if the above holds for all pregaussian random variables, $B$ is of cotype 2.
\end{Lemma}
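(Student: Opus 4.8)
The plan is to establish the two implications separately. The converse is elementary, obtained by testing the asserted inequality against a well-chosen finitely supported random variable; the forward implication I would reduce, via truncation, to the central limit theorem in cotype $2$ spaces, which is the single non-trivial ingredient imported from \cite{ledoux1991}.

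For the converse, suppose $\E\norm{Y}^2\le C\,\E\norm{G_Y}^2$ for every tight mean zero pregaussian $Y$. By the equivalence of Gaussian and Rademacher cotype $2$ recorded in \cite{ledoux1991} it is enough to check the Gaussian cotype $2$ inequality $\sum_{i\le n}\norm{x_i}^2\lesssim\E\norm{\sum_{i\le n}g_ix_i}^2$ for arbitrary $x_1,\dots,x_n\in B$, where $(g_i)$ are i.i.d.\ standard normal. I would apply the hypothesis to the random variable $X$ that takes each of the $2n$ values $\pm\sqrt n\,x_i$ with probability $(2n)^{-1}$: this $X$ is mean zero, trivially tight, and a one-line computation gives $\E\norm{X}^2=\sum_{i\le n}\norm{x_i}^2$ and $T_X(f,f)=\sum_{i\le n}f(x_i)^2=\E[f(\sum_{i\le n}g_ix_i)^2]$ for all $f\in B^*$. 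Hence $X$ is pregaussian with $G_X\overset{d}{=}\sum_{i\le n}g_ix_i$, and the assumed inequality is exactly $\sum_{i\le n}\norm{x_i}^2\le C\,\E\norm{\sum_{i\le n}g_ix_i}^2$.

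For the forward implication I would first treat the case $\E\norm{X}^2<\infty$. Take i.i.d.\ copies $(X_i)$ of $X$, an independent copy $(X_i')$, and set $\tilde X_i=X_i-X_i'$; applying the Rademacher cotype $2$ inequality conditionally on $(\tilde X_i)_{i\le n}$, taking expectations, exploiting the symmetry of the $\tilde X_i$, and using $\E\norm{X}^2\le\E\norm{\tilde X_1}^2$ (Jensen, $\E X=0$) yields $\E\norm{X}^2\lesssim\E\norm{n^{-1/2}\sum_{i\le n}X_i}^2$ for every $n$, with a constant depending only on $B$. The substantial input enters now: since $B$ is of cotype $2$ and $\E\norm{X}^2<\infty$, $X$ satisfies the central limit theorem and $\E\norm{n^{-1/2}\sum_{i\le n}X_i}^2\to\E\norm{G_X}^2$; letting $n\to\infty$ gives $\E\norm{X}^2\lesssim\E\norm{G_X}^2$. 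To remove the second-moment assumption I would truncate: for $M>0$ put $Y_M=X\1\{\norm{X}\le M\}-\E[X\1\{\norm{X}\le M\}]$. Each $Y_M$ is bounded (hence square-integrable) and tight because $X$ is, and $\E[f(Y_M)^2]\le\E[f(X)^2]$ for all $f\in B^*$, so Lemma \ref{Lemma:Conc} gives that $Y_M$ is pregaussian with $\E\norm{G_{Y_M}}^2\lesssim\E\norm{G_X}^2$ uniformly in $M$. Applying the square-integrable case to $Y_M$ and sending $M\to\infty$ (using $Y_M\to X$ a.s.\ and Fatou) then yields $\E\norm{X}^2\lesssim\E\norm{G_X}^2$, as claimed.

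The step I expect to be the real obstacle is the imported one: that in a cotype $2$ space a mean zero, square-integrable random variable satisfies the central limit theorem with convergence of the second moment of the norm, $\E\norm{n^{-1/2}\sum_{i\le n}X_i}^2\to\E\norm{G_X}^2$. This is the deep half of the cotype $2$ characterisation and I would simply quote it from \cite{ledoux1991}; all remaining pieces — the symmetrisation, the conditional use of the cotype inequality, the truncation, and the covariance comparison through Lemma \ref{Lemma:Conc} — are routine.
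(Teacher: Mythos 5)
The paper does not actually prove this lemma: it is listed among facts ``taken directly from \cite{ledoux1991} or \ldots immediate consequences of the results therein'' (it is essentially the cotype~2 moment comparison of Chapter~9 there), so any complete argument you give goes beyond what the paper records. Your converse is correct and is the standard one: the discrete variable taking the values $\pm\sqrt{n}\,x_i$ with probability $(2n)^{-1}$ has exactly the covariance of $\sum_{i\le n}g_ix_i$, hence is pregaussian with that Gaussian as $G_X$, and the equivalence of Gaussian and Rademacher cotype is quotable. The symmetrisation, the conditional application of the cotype inequality, the truncation $Y_M$, and the covariance comparison $\E[f(Y_M)^2]=\mathrm{Var}(f(X)\1\{\norm{X}\le M\})\le\E[f(X)^2]$ feeding into Lemma \ref{Lemma:Conc} are all sound.

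The one step that fails as written is your justification of the CLT. You assert that cotype $2$ together with $\E\norm{X}^2<\infty$ implies the central limit theorem; that is the \emph{type}~$2$ criterion, not the cotype~$2$ one, and it is false in cotype~$2$ spaces that are not of type~$2$ --- in particular in $L^1$, the space this paper works in, there exist mean zero variables with $\E\norm{X}^2<\infty$ that are not even pregaussian and hence cannot satisfy the CLT. The statement you need is the cotype~$2$ half of the CLT characterisation: in a cotype~$2$ space a mean zero \emph{pregaussian} variable satisfies the CLT. Since each truncation $Y_M$ is pregaussian by Lemma \ref{Lemma:Conc}, this substitution repairs the argument; you also correctly note that weak convergence alone is insufficient (Fatou gives only $\liminf_n\E\norm{n^{-1/2}\sum_{i\le n}X_i}^2\ge\E\norm{G_X}^2$, the useless direction), so the uniform square-integrability of $\norm{S_n/\sqrt n}$ must be imported as well, which is available because $t^2\P(\norm{Y_M}>t)\to0$. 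A final caveat: in \cite{ledoux1991} the CLT for pregaussian variables in cotype~$2$ spaces is itself \emph{derived from} the moment comparison you are proving, so routing the proof through the CLT is circular relative to that source, even if not logically circular once the CLT is taken as a black box. If the goal is simply to justify the lemma for this paper, citing the cotype~$2$ comparison directly, as the paper implicitly does, is the cleaner course.
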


\begin{Lemma}
\label{Lemma:Shift}
 Let $X$ be a pregaussian random variable in $L^1$. Then
 \begin{align}
     G_{X(\cdot)-X(\cdot-y)}=G_{X}(\cdot)-G_X(\cdot - y)
 \end{align}
\end{Lemma}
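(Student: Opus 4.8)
The plan is to recognise $Y:=X(\cdot)-X(\cdot-y)$ as the image of $X$ under a fixed bounded linear operator on $L^1$ and then invoke the stability of Gaussian laws and of covariance operators under such operators.

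First I would fix the convention that functions on $[0,1]$ are extended by $0$ outside the interval, so that the shift $S_y\colon f\mapsto f(\cdot-y)$ is a well-defined linear operator on $L^1$ with $\norm{S_y f}_1\le\norm{f}_1$. Then $T:=\mathrm{Id}-S_y$ is bounded on $L^1$, $\norm{T}\le 2$, and its adjoint $T^\ast\colon L^\infty\to L^\infty$ is $\mathrm{Id}-S_y^\ast$, again a difference of a shift and the identity. With this notation $Y=TX$, and for any realisation of $G_X$ the right-hand side of the claimed identity is precisely $TG_X$.

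Second I would record the two standard facts that drive the argument. (i) A bounded linear image of a Gaussian random variable in $L^1$ is Gaussian: if $Z$ is Gaussian and $A$ bounded linear, then for $l_1,\dots,l_k\in L^\infty$ one has $(l_1(AZ),\dots,l_k(AZ))=((A^\ast l_1)(Z),\dots,(A^\ast l_k)(Z))$, which is jointly normal; hence $TG_X$ is Gaussian. (ii) If $W$ has a covariance operator $T_W$ and $A$ is bounded linear, then the covariance operator of $AW$ is $(l,l')\mapsto T_W(A^\ast l,A^\ast l')$, since $l(AW)=(A^\ast l)(W)$. Since $X$ is pregaussian all the relevant second moments $\E[l(X)^2]$, $l\in L^\infty$, are finite, so these covariance operators are well defined; if one wishes, cotype $2$ of $L^1$ together with Lemma \ref{Lemma:Cotype} even yields $\E[\norm{X}_1^2]<\infty$, hence $\E[\norm{Y}_1^2]\le\norm{T}^2\E[\norm{X}_1^2]<\infty$.

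Finally I would put the pieces together. By (ii) with $A=T$, the covariance operator of $Y=TX$ is $(l,l')\mapsto T_X(T^\ast l,T^\ast l')$. By definition $G_X$ has the same covariance operator as $X$, so (ii) applied to $G_X$ gives that $TG_X$ has covariance operator $(l,l')\mapsto T_{G_X}(T^\ast l,T^\ast l')=T_X(T^\ast l,T^\ast l')$, i.e.\ exactly the covariance operator of $Y$. Thus $TG_X$ is a Gaussian random variable in $L^1$ whose covariance operator coincides with that of $Y$; since a centered Gaussian law on the separable Banach space $L^1$ is determined by its covariance operator, this says precisely that $G_Y$ exists and $G_Y\overset{d}{=}TG_X=G_X(\cdot)-G_X(\cdot-y)$. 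The argument is essentially bookkeeping once the operator viewpoint is adopted; the only point that needs genuine care is the consistent interpretation of the shift on $[0,1]$ so that $T$ really maps $L^1$ into $L^1$ and is bounded, which is why I would state that convention at the very start. I do not expect a real obstacle here.
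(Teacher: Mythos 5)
Your proposal is correct and takes essentially the same approach as the paper: the paper's proof computes the adjoint of the shift explicitly via $g(X(\cdot-y))=g_y(X)$ and then verifies equality of the covariance operators of $X(\cdot)-X(\cdot-y)$ and $G_X(\cdot)-G_X(\cdot-y)$, which is precisely your operator-theoretic argument written out concretely. Your additional remarks (Gaussianity of the bounded linear image, the extension-by-zero convention) only make explicit what the paper leaves implicit.
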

\begin{proof}
We only need to show that the covariance operators are equal. To that end let $g,h \in L^\infty$ and consider
\begin{align}
    g(X(\cdot-y))=\int_0^1 g(t)X(t-y)dt=\int_0^1 g(t+y)X(t)=g_y(X(\cdot))
\end{align}
where $g_y$ denotes $g(\cdot+y)$ for any $g \in L^\infty$ (we extend $g$ to be 0 outside of $[0,1]$). Using this we straightforwardly obtain
\begin{align}
    g(X(\cdot)-X(\cdot-y))h(X(\cdot)-X(\cdot-y))&=g(X)h(X)-g_y(X)h(X)-g(X)h_y(X)+g_y(X)h_y(X)\\
    &=(g-g_y)(X)(h-h_y)(X)~.
\end{align}
Taking expectations and using that $X$ and $G_X$ have the same covariance operator yields
\begin{align}
    \E[g(X(\cdot)-X(\cdot-y))h(X(\cdot)-X(\cdot-y))]&=\E[(g-g_y)(G_X)(h-h_y)(G_X)]\\
    &=\E[g(G_X(\cdot)-G_X(\cdot-y))h(G_X(\cdot)-G_X(\cdot-y))]
\end{align}
which establishes the desired result.
\end{proof}

\subsection{Theorem \ref{Thm:L1Inv}: Strong invariance principle for $L^1$ valued time series}\hfill\\
\begin{proof}
    We want to apply Theorem 3 from \cite{Dehling1983}. To that end we need to find a suitable family of projections $P_N$ that have $N$ dimensional range and fulfill the approximation assumption 1.19 in the cited reference.\\    
    
    We define
    \begin{align}
        P_Nf(x)=N\sum_{i=1}^N\int_{(i-1)/N}^{i/N}f(z)dz\1\{x \in [(i-1)/n,i/n)\}
    \end{align}
    and note that it has norm 1 as an operator from $L^1$ to $L^1$. We further calculate
    \begin{align}
        \norm{Pf-f}_1&=\sum_{i=1}^N\int_{(i-1)/N}^{i/N}|f(x)-Pf(x)|dx\\
        &\leq \sum_{i=1}^N\int_{(i-1)/N}^{i/N}\Big|N\int_{(i-1)/N}^{i/N}f(x)-f(z)dz\Big|dx\\
        &\leq N\sum_{i=1}^N\int_{(i-1)/N}^{i/N}\int_{-1/N}^{2/N}|f(x)-f(x+y)|dydx        \\
        &\leq 3\sup_{|y| \leq 2/N}\int_{[0,1]}|f(x)-f(x+y)|dx  
    \end{align}
    where we used a change of variables in the third line and swapped integration order to obtain the last line. Now we square, replace $f$ by $\sqrt{n}S_n:=n^{-1/2}\sum_{j=1}^nX_j$ and take Expectations to obtain
    \begin{align}
        \E\Big[n\norm{S_n-P_NS_n}_1^2\Big] \lesssim \E\Big[\sup_{|y| \leq 2/N}n\norm{S_n(\cdot)-S_n(\cdot+y)}_1^2\Big]
    \end{align}
    We will now check the assumption of Theorem 2.2.4 in \cite{Wellner1996} to bound this quantity. As $L^1$ has cotype 2 we have that
    \begin{align}
        \E\Big[n\norm{S_n(\cdot)-S_n(\cdot+y)}_1^2\Big] \lesssim \E[\norm{G_{\sqrt{n}(S_n(\cdot)-S_n(\cdot+y))}}_1^2]
    \end{align}
    Now observe that for any $g \in (L^1)^*=L^\infty$ we have by the arguments leading to Theorem 3 in \cite{Yoshi78} that
    \begin{align}
        \E[g(\sqrt{n}(S_n(\cdot)-S_n(\cdot+y)))^2]\lesssim \E[g(\epsilon_1(\cdot)-\epsilon_1(\cdot+y))^2]
    \end{align}
    By Lemmas \ref{Lemma:Conc}, \ref{Lemma:Shift} and A4) we thus have 
    \begin{align}
        \E[\norm{G_{\sqrt{n}(S_n(\cdot)-S_n(\cdot+y))}}_1^2]& \lesssim \E[\norm{G_{\epsilon_1}(\cdot)-G_{\epsilon_1}(\cdot-y)}_1^2] \\
        & \lesssim y^{2\alpha}~.
    \end{align}    
   We may therefore apply Theorem 2.2.4 from \cite{Wellner1996} to obtain, for some $c>0$,
    \begin{align}
        \E\Big[\sup_{|y| \leq 2/N}n^{-1}\norm{S_n(\cdot)-S_n(\cdot+y)}_1^2\Big] \lesssim N^{-c}~.
    \end{align}
    With this we have checked assumption 1.19 from \cite{Dehling1983} which finishes the proof.
\end{proof}

\subsection{Theorem \ref{Thm:BootCons}: Weak invariance principle for the classical bootstrap}\hfill\\
The result immediately follows from the following weak invariance principle for the bootstrap process in combination with the continuous mapping theorem (see Lemma 2.2 in \cite{Bucher2019}).

We will verify, for any $k\geq 1$, the weak convergence (in $C([0,1],L^1)^{k+1}$)
\begin{align}
    \sqrt{n}(n^{-1}\sum_{i=1}^n\epsilon_i,S_{n,1}^*,...,S_{n,k}^*) \rightarrow (\W, \W_1,...,\W_k)
\end{align}
where $\W_i$ are iid copies of $\W$ and $S_{n,i}^*$ are bootstrap replicas of $S_n^*$ with mutually independent multipliers. To that end we will need to verify convergence of the finite dimensional distributions and establish tightness. The convergence of the finite dimensional distributions follows by exactly the same arguments as in the proof of Theorem 4.3 in \cite{dette2020}, where we replace evaluation at some point $t \in [0,1]$ by linear functionals on $L^1$. We will therefore only spell out the verification of tightness in all its details. As joint tightness is equivalent to marginal tightness we need only establish it for $S_n^*$ (tightness for the coordinate involving $(\epsilon_i)_{i=1,...,n}$ follows from Theorem \ref{Thm:L1Inv}). We first establish the desired result with  $\tilde S_n^*(s)=\frac{1}{n}\sum_{i=1}^{ns}\frac{v_i}{\sqrt{l}}\sum_{k=0}^{i+l-1}\epsilon_{n,i+k}$ instead and then show that the difference $\tilde S_n^*-S_n^*$ is asymptotically negligible. We also recall that we, WLOG, assume that $\mu^{(1)}=0$.\\

\textbf{Tightness}
We want to show that $\tilde S_n^*$ is a tight sequence of processes, i.e. we need to verify that $\p(\sqrt{n} \tilde S_n^* \in A)\geq 1-\epsilon$ for some compact subset $A$ of $C([0,1],L^1)$. To that end we note that by Arzelà-Ascoli we therefore need to find a set $A$ with $\p(A)\geq 1-\epsilon$ on which we have that
\begin{enumerate}
    \item[i)] $\sqrt{n}\tilde  S_n^*(s)$ is relatively compact (in $L^1$) for all $s$
    \item[ii)] $\sqrt{n}\tilde S_n^*(\cdot)$ is uniformly equicontinuous.
\end{enumerate}
\textbf{To show $i)$} we need to show (see \cite{Sud57}) that
\begin{align}
\label{p1}
    \sqrt{n}\sup_{|y|<\rho}\norm{\tilde S_n^*(s,\cdot)-\tilde S_n^*(s,\cdot+y)}_1 
\end{align}
goes to 0 as $\rho$ goes to 0. Using the same arguments as in the proof of Theorem \ref{Thm:L1Inv} we may obtain that
\begin{align}
    \E\left[n\sup_{|y|<\rho}\norm{\tilde S_n^*(s,\cdot)-\tilde S_n^*(s,\cdot+y)}_1^2 \right] \lesssim \E\left[\norm{G_{\sqrt{n}(\tilde S_n^*(s,\cdot)-\tilde S_n^*(s,\cdot+y))}}_1^2\right]
\end{align}
By the independence of the multipliers and the data we have by the arguments in the proof of Theorem 3 from \cite{Yoshi78} in combination with Lemmas \ref{Lemma:Conc} and \ref{Lemma:Shift} that
\begin{align}
    \E\left[\norm{G_{\sqrt{n}(\tilde S_n^*(s,\cdot)-\tilde S_n^*(s,\cdot+y))}}_1^2\right] \lesssim \E[\norm{G_{\epsilon_1(\cdot)-\epsilon_1(\cdot+y)}}_1^2] \lesssim y^{2\alpha}~.
\end{align}
Applying Theorem 2.2.4 from \cite{Wellner1996} then yields a set $A_1$ on which \eqref{p1} holds with probability $1-\epsilon/2$. \\

\textbf{To establish $ii)$} we have to show that
\begin{align}
\label{p2}
\sup_{|s-t|<\rho}\norm{\tilde S_n^*(s,\cdot)-\tilde S_n^*(t,\cdot)}_1
\end{align}
goes to $0$ as $\rho$ goes to 0. Using the same arguments as for establishing $i)$ we obtain
\begin{align}
    \E\left[n\sup_{|s-t|<\rho}\norm{\tilde  S_n^*(s,\cdot)-\tilde S_n^*(t,\cdot)}_1^2 \right] \lesssim \E\left[\norm{G_{\sqrt{n}(\tilde S_n^*(s,\cdot)-\tilde S_n^*(t,\cdot))}}_1^2\right]~.
\end{align}
Using that the sum $\sqrt{n}(\tilde S_n^*(s,\cdot)-\tilde S_n^*(t,\cdot))$ has at most $\lceil |t-s| \rceil$ summands one may proceed as for $i)$  to obtain that
\begin{align}
    E\left[\norm{G_{\sqrt{n}(\tilde S_n^*(s,\cdot)-\tilde S_n^*(t,\cdot))}}_1^2\right] \lesssim |t-s|~.
\end{align}
This yields a set $A_2$ on which \eqref{p2} holds with probability $1-\epsilon/2$. Defining $A=A_1 \cap A_2$ yields the desired set. \\

\textbf{Approximating $S_n^*$ by $\tilde S_n^*$}
 We are therefore left with showing that 
\begin{align}
\label{pb4}
    \sup_{s \in [0,1]}\norm{\sqrt{n}S_n^*(s,\cdot)-\sqrt{n}\tilde S_n^*(s,\cdot)}_1=o_\p(1)
\end{align}
We first consider the case that $H_1$  holds and will argue in two steps. But first we require some definitions. Let
\begin{align}
    \check \mu^{(1)}&=\frac{1}{k^*}\sum_{i=1}^{k^*}X_i\\
    \check \mu^{(2)}&=\frac{1}{n-k^*}\sum_{k^*+1}^n X_i\\
   \check Y_{n,i}&= X_{n,i}- (\check \mu^{(2)}-\check \mu^{(1)})\1\{i \geq  k^*\}
\end{align}
and define
\begin{align}
    \check S_n^*(s)=\frac{1}{n}\sum_{i=1}^{ns}\frac{\nu_i}{\sqrt{l}}\Big(\sum_{k=0}^{i+l-1}\check Y_{n,i+k}-\frac{l}{n}\sum_{j=1}^n\check Y_{n,j}\Big)
\end{align}
\textbf{Step 1: Replacing $S_n^*$ by $\check S_n^*$}\\
By Theorem \ref{Thm:CPCon} we have that $|k^*-\hat k|=O_\p(1)$. Therefore we have  (due to $k^*/N \rightarrow s^* \in (0,1)$), that
\begin{align}
\label{pb10}
    \hat \mu^{(1)}-\check \mu^{(1)}=\frac{k^*-\hat k}{\hat kk^*}\sum_{i=k^*}^{\hat k}X_i=O_\p(n^{-1})
\end{align}
and a similar argument yields the same result for $\mu^{(2)}$.

Similarly we have for all but $O_P(1)$ many indices that $\1\{i\geq k^*\}=\1\{i\geq \hat k\}$. In particular we have for all but $O_P(1)$ many indices that 
\begin{align}
    \norm{\check Y_{n,i}-\hat Y_{n,i}}_1 \leq 2\max_{i=1,2}\norm{\hat \mu^{(i)}-\check \mu^{(i)}}_1=O_\p(n^{-1})
\end{align}
Letting $B=\{i: (i+l-1<\min(\hat k, k^*)) \lor (i>\max(\hat k,k^*))\}$ a simple calculation using \eqref{pb10} then yields that
\begin{align}
    \sqrt{n}\sup_{s \in [0,1]}\norm{S_n^*-\check S_n^*}_1&\leq \sup_{s \in [0,1]}\sqrt{l/n}\norm{O_\p(n^{-1})\sum_{i=\hat k \land k^*}^{ns}\nu_i \1\{i \in B\}}_1\\
    & \quad \quad +\sup_{s \in [0,1]}\sqrt{1/n}\norm{\sum_{i=1}^{ns}\frac{\nu_i}{\sqrt{l}}\1\{i \notin B\}\sum_{k=0}^{l-1}\check Y_{n,i+k}-Y_{n,i+k}}_1 \\
    & \quad \quad + \sup_{s \in [0,1]}\norm{\frac{1}{n}\sum_{i=1}^{ns}\frac{\nu_i}{\sqrt{l}}\frac{l}{n}\sum_{j=1}^n(\hat Y_{n,j}-\check Y_{n,j})}_1
\end{align}
The first term on the right hand side is $o_p(1)$ by Hölder's inequality and the fact that $\sup_{s \in [0,1]}|\sum_{i=1}^{ns}\nu_i|=O_\p(\sqrt{n})$. The second term can be handled by a simple application of the triangle inequality because $|B^c|=O_\P(l)$. The third term can be shown to be of order $O_\p(\frac{\sqrt{l}}{n})$ by similar arguments.

\textbf{Step 2: Replacing $\check S_n^*$ by $\tilde S_n^*$}\\
We have
\begin{align}
    \check S_n^*(s)-\tilde S_n^*(s)=\frac{1}{n}\sum_{i=k^*}^{ns}\frac{\nu_i}{\sqrt{l}}\sum_{k=0}^{l-1}\Big(  \mu^{(2)}-\check \mu^{(2)} \Big)+\frac{1}{n}\sum_{i=1}^{ns}\frac{\nu_i}{\sqrt{l}}\sum_{k=0}^{l-1}\Big(\check  \mu^{(1)}-\frac{1}{n}\sum_{j=1}^n \check Y_{n,j}\Big)
\end{align}
Using Theorem \ref{Thm:L1Inv} yields that 
\begin{align}
    \norm{\check \mu^{(i)}-\mu^{(i)}}_1=O_\p(n^{-1/2})  \quad , i=1,2
\end{align}
so that we obtain
\begin{align}
    \sup_{s \in [0,1]}\sqrt{n}\norm{\check S_n^*(s)-\tilde S_n^*(s)}_1&\leq \frac{\sqrt{l}}{\sqrt{n}} \sup_{s \in [0,1]}\norm{O_\p(n^{-1/2})\sum_{i=1}^{ns}\nu_i }_1\\
    &\leq \sqrt{l/n}O_\p(1)=o_\p(1)
\end{align}
where we used Hölder's inequality and the fact that $\sup_{s \in [0,1]}|\sum_{i=1}^{ns}\nu_i|=O_\p(\sqrt{n})$.\\

In the case where $H_0$ holds \eqref{pb4} follows along similar lines. Here one uses that for any $\epsilon>0$ we can find a $\rho$ so that $\hat s$ will take values in the set $[\rho,1-\rho]$ with probability $1-\epsilon$. On this event one can use arguments similar to the ones above to obtain that
\begin{align}
    \sup_{s \in [0,1]}\norm{\sqrt{n}S_n^*(s,\cdot)-\sqrt{n}\tilde S_n^*(s,\cdot)}_1
\end{align}
is small. We omit the details as they are not particularly interesting, but we will  demonstrate how to find $\rho$. As $\W(s)-s\W(1)$ is a Gaussian process Theorem 4.4.1 from \cite{bogachev2015} yields that $\norm{\W(s)-s\W(1)}_{\infty,1}$ has a continuous distribution. Ignoring the trivial case of $C=0$ we therefore have, using standard results on the modulus of continuity of brownian motions, that we can first find $a$ and then $b$, each sufficiently small, so that that
\begin{align}
    \p(\norm{\W(s)-s\W(1)}_{\infty,1}>a)\geq 1-\epsilon\\
    \p(\max_{s \in [0,b]\cup[1-b,1]}\norm{\W(s)-s\W(1)}_{1}<a) \geq 1-\epsilon
\end{align}
By Theorem \ref{Thm:L1Inv} (and again Theorem 4.4.1 from \cite{bogachev2015} to obtain continuity for the distribution of the partial maximum) we can then find $n$ sufficiently large so that
\begin{align}
    \p(\sqrt{n}\norm{\U_n}_{\infty,1}>a)\geq 1-2\epsilon\\
    \p(\sqrt{n}\max_{s \in [0,b]\cup[1-b,1]}\norm{U_n}_{1}<a) \geq 1-2\epsilon
\end{align}
Letting $\rho=b$ we are done by the definition of $\hat s$.

\subsection{ Theorem \ref{Thm:CPCon}: Consistency of Change Point Estimator}\hfill\\
We will use Corollary 2 from \cite{Hariz2007}, we  therefore need to define an appropriate 
seminorm  $N$ on the space $\mathcal{M}$ of finite (signed) measures on $L^1$. To that end we define  for a constant $c$ to be chosen later the family 
\begin{align}
    \mathcal{F}_c:=\{\phi_{s,t,c}:L^1 \rightarrow \R | \phi_{s,t,c}(g)=\int_s^t g(x)\land c dx\}
\end{align}
 of integral functionals. We now define the seminorm
\begin{align}
    N_c(\nu)=\sup_{(s,t) \in [0,1]} \Big|\int_{L^1}\phi_{s,t,c}(x)d\nu(x)\Big|
\end{align}
and note that for $P=\p^{X_1}, Q=\p^{X_n}$ we have
\begin{align}
    \int_{L_1}\phi_{s,t,c}(x)d(P-Q)(x)&=\E[ \phi_{s,t,c}(X_1)-\phi_{s,t,c}(X_n)]\\
    &=\int_s^t \E[X_1\land c](x)-\E[X_n \land c](x)dx~.
\end{align}
If $\mu^{(1)}\neq \mu^{(2)}$ we can choose $c$ large enough so that for some $(s,t)$ we have
\begin{align}
    \int_s^t \E[X_1\land c](x)-\E[X_n \land c](x)dx>0
\end{align} which implies $N_c(P-Q)>0$. We therefore only need to verify Assumptions 1 and 2 from \cite{Hariz2007} to apply their Corollary 2. Note that by Hölder's inequality ($g(x)\land c$ is always a bounded function!) $\phi_{s,t,c}$ is Lipschitz in $(s,t)$ with respect to the metric $d((s,t),(u,v))=|s-u|+|u-v|$ on $[0,1]^2$. Theorem 2.7.11 from \cite{Wellner1996} then yields that Assumption 2 holds true. Assumption 1 is an easy consequence of our Assumptions A1) and A2).
\\

\subsection{Theorem \ref{Thm:BootTest}}\hfill\\
We only consider the case $d_1>0$. The case $d_1=0$ follows by similar but easier arguments. We will first establish the desired result for the statistic
\begin{align}
    T_{n,\Delta}=\sqrt{n}\Big(\sup_{s \in [0,1]}\norm{\U_n(s)}_1-s^*(1-s^*)\Delta\Big)
\end{align}
The same then follows for $\hat T_{n,\Delta}$ by noting that an application of Theorem \ref{Thm:CPCon} yields
\begin{align}
    (s^*(1-s^*)-\hat s(1-\hat s))\Delta=O_\p(n^{-1})~.
\end{align}
The remainder of the proof consists of the following two steps:
\begin{enumerate}
    \item[1)] Show that the process $\sqrt{n}\Big(\U_n(s)-(s\land s^*-ss^*)(\mu^{(1)}-\mu^{(2)})\Big)_{s \in [0,1]}$ converges in distribution to $\Big(\W(s)-s\W(1)\Big)_{s \in [0,1]}$.
    \item[2)] Use the delta method to establish the result in the case $\Delta=d_1$. Derive the other two cases as corollaries to this case.
\end{enumerate}

\textbf{Step 1:} 
The result follows by noting that $\Big(\U_n(s)-(s \land s^*-ss^*)(\mu^{(1)}-\mu^{(2)})\Big)_{s \in [0,1]}$ is simply the sequential cusum process associated to the centered random variables $Z_i=X_i-\mu_i$ and an application of Theorem \ref{Thm:L1Inv} followed by an application of the continuous mapping theorem.\\
\textbf{Step 2:}
By Theorem \ref{Thm:HadDiff} and the delta method (see Theorem 2.1 in \cite{shapiro1991}) we obtain that
\begin{align}
    \sqrt{n}\Big( \norm{\U_n(s)}_{\infty,1}-s^*(1-s^*)d_1) \Big)\overset{d}{\rightarrow} T
\end{align}
which yields the desired result when $d_1=\Delta$. For the other cases we simply write
\begin{align}
    T_{n,\Delta}= \sqrt{n}\Big( \norm{\U_n(s)}_{\infty,1}-s^*(1-s^*)d_1) \Big) + \sqrt{n}s^*(1-s^*)(d_1-\Delta)
\end{align}
and notice that the first summand is tight while the second summand diverges to $\pm \infty$ depending on whether or not $d_1$ is larger or smaller than $\Delta$.\\

\subsection{Theorem \ref{Thm:RelBoot}}\hfill\\
Let $T_1,...,T_k$ be iid copies of $T$ and let $\hat T^*_1,...,\hat T^*_k$ be given by $\hat T^*$ calculated from $k$ bootstrap samples of $S_n^*$ which we denote by $S_{n,i}^*$. The first part of the theorem follows immediately from establishing, for all $k \geq 1$, the weak convergence
\begin{align}
    (\hat T, \hat T^*_1,...,\hat T^*_k) \rightarrow (T, T_1, ..., T_k)
\end{align}
where 
\begin{align}
    \hat T=\sqrt{n}\Big( \norm{\U_n(s)}_{\infty,1}-\hat s(1- \hat s)d_1) \Big)~.
\end{align}
Confer Lemma 2.2 from \cite{Bucher2019} for details on how to obtain the conditional convergence from this result.\\

We begin by noting that by Lemma \ref{Lem:SetEst}
\begin{align}
    \Big|\int_{\mathcal{N}}|\U_n^*(\hat s,t)|dt-\int_{\hat{\mathcal{N}}}|\U_n^*(\hat s,t)|dt\Big| &\leq \norm{\U_n^*(\hat s,\cdot)}_1\Big(\lambda(\hat{\mathcal{N}}\setminus \mathcal{N}) + \lambda(\mathcal{N}\setminus \hat{\mathcal{N}})\Big)\\
    &=o_\p(n^{-1/2})~,
\end{align}
where $\U_{n,i}^*(s)=S_{n,i}^*(s)-sS_{n,i}^*(1)$. A similar argument for the integral over $\mathcal{N}^c$ therefore yields that
\begin{align}
   \hat T_i^*&=\sqrt{n}\Big(\int_{\mathcal{N}^c}\text{sgn}(d(t))\U^*_{n,i}(\hat s,t)dt+\int_{\mathcal{N}}\Big|\U^*_{n,i}(\hat s,t)\Big|dt\Big)+o_\p(1)\\
   &=\sqrt{n}\Big(\int_{\mathcal{N}^c}\text{sgn}(d(t))\U^*_{n,i}(s^*,t)dt+\int_{\mathcal{N}}\Big|\U^*_{n,i}( s^*,t)\Big|dt\Big)+o_\p(1)~,
\end{align}
where the second line follows by Theorem \ref{Thm:CPCon} and some straightforward bounds.

By Theorem 2.1 from \cite{shapiro1991} and Theorem \ref{Thm:HadDiff} we also have that 
\begin{align}
    \hat T=\sqrt{n}\Big(\int_{\mathcal{N}^c}\text{sgn}(d(t))\V_n(s^*,t)dt+\int_{\mathcal{N}}\Big|\V_n(s^*,t)\Big|dt\Big)+o_\p(1)
\end{align}
where $\V_n(s)=\U_n(s)-(s \land s^*-ss^*)(\mu^{(1)}-\mu^{(2)})$. We remark that we can therefore write $(\hat T, \hat T^*_1,...,\hat T^*_k)$ as the sum of $o_\p(1))$ terms and a term that is a continuous function of
\begin{align}
\label{pb5}
    (\V_n, \U_{n,1}^*,...,\U_{n,k}^*)~.
\end{align}
An elementary calculation shows that one may apply Theorem \ref{Thm:L1Inv} to obtain that $\V_n$ converges weakly to $\W$. The weak convergence of $\U_{n,1}^*$ to $\W$ has been established in the proof of  Theorem \ref{Thm:BootCons}. The vector
\eqref{pb5} is therefore tight. Finite dimensional convergence also follows by the same arguments as in the proof of Theorem \ref{Thm:BootCons}. As a consequence the vector \eqref{pb5} converges in distribution to $(\W,\W_1,...,\W_k)$ where $\W_i$ are iid copies of $\W$. The desired result then follows by the continuous mapping theorem and the remark preceding equation \eqref{pb5}.
\\

The second part of the theorem follows by a simple case distinction between $d_1>0$ and $d_1=0$. In the former case the result follows immediately from  Theorem \ref{Thm:BootTest} and
\begin{align}
    \hat T^*\overset{d}{\rightarrow} T~.
\end{align}
The latter case follows from $\hat T^*$ being a tight random variable and Theorem \ref{Thm:BootTest}.

\begin{Lemma}
\label{Lem:SetEst}
    Grant assumptions A1) to A4). Then
    \begin{align}
        \Big(\lambda(\hat{\mathcal{N}}\setminus \mathcal{N}) + \lambda(\mathcal{N}\setminus \hat{\mathcal{N}})\Big)=o_\p(1)
    \end{align}
\end{Lemma}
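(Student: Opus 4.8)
The plan is to derive an $L^1$ rate for $\hat d-d$ and then convert it, via Markov's inequality and a continuity-of-measure argument for the level sets of $d$, into control of both halves of the symmetric difference; no assumption on $\lambda(\mathcal{N})$ is required. For the rate, I would write $\hat\mu^{(1)}-\mu^{(1)}=\tfrac{1}{\hat k}\sum_{i=1}^{\hat k}(\mu_i-\mu^{(1)})+\tfrac{1}{\hat k}\sum_{i=1}^{\hat k}\epsilon_i$. By Theorem \ref{Thm:CPCon} we have $|\hat k-k^*|=O_\p(1)$ and $\hat k^{-1}=O_\p(n^{-1})$, so the deterministic-mean part has $L^1$ norm at most $\hat k^{-1}|\hat k-k^*|\,\norm{\mu^{(1)}-\mu^{(2)}}_1=O_\p(n^{-1})$. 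For the error part, Theorem \ref{Thm:L1Inv} gives $\sum_{i\le m}\epsilon_i=B(m)+O(m^{1/2-\gamma})$ with $B$ an $L^1$-valued Brownian motion, and by self-similarity $\sup_{0\le u\le n}\norm{B(u)}_1\overset{d}{=}\sqrt n\,\sup_{0\le u\le 1}\norm{B(u)}_1=O_\p(\sqrt n)$, whence $\hat k^{-1}\norm{\sum_{i\le\hat k}\epsilon_i}_1=O_\p(n^{-1/2})$. The same bound holds for $\hat\mu^{(2)}-\mu^{(2)}$, so $\norm{\hat d-d}_1=O_\p(n^{-1/2})$. Given this, the direction $\mathcal{N}\setminus\hat{\mathcal{N}}$ is immediate: on $\mathcal{N}$ we have $d\equiv 0$, so $t\in\mathcal{N}\setminus\hat{\mathcal{N}}$ forces $|\hat d(t)-d(t)|=|\hat d(t)|>\log(n)/\sqrt n$, and Markov's inequality gives $\lambda(\mathcal{N}\setminus\hat{\mathcal{N}})\le \tfrac{\sqrt n}{\log n}\norm{\hat d-d}_1=O_\p((\log n)^{-1})=o_\p(1)$.

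The direction $\hat{\mathcal{N}}\setminus\mathcal{N}$ is where care is needed, because the natural bound involves the \emph{random} threshold $\log(n)/\sqrt n+|\hat d(t)-d(t)|$. I would fix a deterministic sequence $a_n\downarrow 0$ with $a_n\sqrt n/\log n\to\infty$ (e.g.\ $a_n=n^{-1/4}$). If $t\in\hat{\mathcal{N}}\setminus\mathcal{N}$ then $0<|d(t)|\le|\hat d(t)|+|\hat d(t)-d(t)|\le \log(n)/\sqrt n+|\hat d(t)-d(t)|$, so
\begin{align}
    \hat{\mathcal{N}}\setminus\mathcal{N}\subseteq \{t:0<|d(t)|\le a_n\}\cup\{t:|\hat d(t)-d(t)|>a_n-\log(n)/\sqrt n\}.
\end{align}
The second set has measure at most $(a_n-\log(n)/\sqrt n)^{-1}\norm{\hat d-d}_1=O_\p(n^{-1/4})=o_\p(1)$ by Markov and the rate above. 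For the first, $\phi(a):=\lambda(\{t:0<|d(t)|\le a\})$ is non-decreasing in $a$ and $\bigcap_{a>0}\{t:0<|d(t)|\le a\}=\emptyset$, so continuity of measure from above (using $\lambda([0,1])<\infty$) gives $\phi(a)\to 0$ as $a\downarrow 0$; since $a_n$ is deterministic, $\phi(a_n)\to 0$. Combining the two directions proves the lemma.

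The main obstacle is this last step: one cannot directly estimate $\lambda$ of a level set of $d$ cut at a random, $t$-dependent level, so the key device is to split off a deterministic scale $a_n$ — handled by pure measure theory, with no regularity assumption on $d$ near its zero set and no assumption that $\lambda(\mathcal{N})=0$ — from the stochastic remainder, choosing $a_n$ so as to balance the $n^{-1/2}$ rate of $\norm{\hat d-d}_1$ against the $\log(n)/\sqrt n$ bandwidth in the definition of $\hat{\mathcal{N}}$. The rate step is routine given Theorems \ref{Thm:CPCon} and \ref{Thm:L1Inv}, the only subtlety being the randomness of $\hat k$, which is absorbed through $\hat k^{-1}=O_\p(n^{-1})$ and the self-similarity of $B$.
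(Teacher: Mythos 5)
Your proposal is correct and follows essentially the same route as the paper: establish $\norm{\hat d-d}_1=O_\p(n^{-1/2})$ from Theorems \ref{Thm:CPCon} and \ref{Thm:L1Inv}, control $\lambda(\mathcal{N}\setminus\hat{\mathcal{N}})$ by Markov's inequality against the $\log(n)/\sqrt{n}$ threshold, and control $\lambda(\hat{\mathcal{N}}\setminus\mathcal{N})$ by splitting off a deterministic level set $\{0<|d|\le a\}$ whose measure vanishes by continuity of measure from above. Your use of a diagonal sequence $a_n=n^{-1/4}$ is only a cosmetic repackaging of the paper's fixed-$c$-then-$\eta$-arbitrary argument.
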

\begin{proof}
    We know by equation \eqref{pb10} that
    \begin{align}
        \sqrt{n}\norm{\hat \mu^{(1)}-\hat \mu^{(2)} - (\mu^{(1)}-\mu^{(2)})}_1=O_\p(1)
    \end{align}
    Consequently
    \begin{align}
       \lambda\Big(\mathcal{N}\setminus \hat{\mathcal{N}}\Big)& \leq \lambda\Big(|\hat \mu^{(1)}-\hat \mu^{(2)}|>\frac{\log(n)}{\sqrt{n}}, |\mu^{(1)}-\mu^{(2)}|=0\Big)\\
       &\leq \frac{\sqrt{n}}{\log(n)}\int_0^1 |\hat \mu^{(1)}-\hat \mu^{(2)}|\1\{|\mu^{(1)}-\mu^{(2)}|=0\}d\lambda \\
       &=o_\p(1)
    \end{align}
    Similarly we have for any $c>0$ and $n$ sufficiently large that
    \begin{align}
        \lambda\Big(|\hat \mu^{(1)}-\hat \mu^{(2)}|<\frac{\log(n)}{\sqrt{n}}, |\mu^{(1)}-\mu^{(2)}|\geq c\Big)&\leq \lambda\Big(|\hat \mu^{(1)}-\hat \mu^{(2)} - (\mu^{(1)}-\mu^{(2)})|\geq c/2\Big)        
    \end{align}
    which yields 
    \begin{align}
        \lambda(\hat{\mathcal{N}}\setminus \{|\mu^{(1)}-\mu^{(2)}|< c\})=o_\p(1)
    \end{align}
    by Markovs inequality. Observing that for any $\eta>0$ we may choose $c$ small enough so that
    \begin{align}
        \lambda(0<|\mu^{(1)}-\mu^{(2)}|<c)<\eta
    \end{align}
    we obtain that
    \begin{align}
         \lambda(\hat{\mathcal{N}}\setminus \mathcal{N})\leq o_\p(1)+\eta
    \end{align}
    As $\eta$ was arbitrary we are done.
\end{proof}

\subsection{Directional Hadamard Differentiability of $\norm{\cdot}_{\infty,1}$}\hfill\\
\begin{theorem}
\label{Thm:HadDiff}
    The function
    \begin{align}
        \norm{\cdot}_{\infty,1}:C([0,1],L^1)&\rightarrow \R\\
         G &\rightarrow \norm{G}_{\infty,1}
    \end{align}
    is directionally hadamard differentiable at every $G \neq 0$. Its derivative at $G$ is given by
    \begin{align}
        D_G\norm{\cdot}_{1,\infty}:C([0,1],L^1)&\rightarrow \R\\
        H \rightarrow \sup_{s \in \mathcal{E}}\Big(\int_{G(s,\cdot)\neq 0} \text{sgn}(G(s,x))H(s,x)dx&+\int_{G(s,\cdot)=0}|H(s,x)|dx\Big)
    \end{align}
    where 
    \begin{align}
        \mathcal{E}=\Big\{s \Big| \int_0^1|G(s,x)|dx=\norm{\int_0^1|G(\cdot,x)|dx}_\infty \Big\}
    \end{align} 
\end{theorem}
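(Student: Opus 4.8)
The plan is to exploit that $\norm{\cdot}_{\infty,1}$ is itself a norm on the Banach space $C([0,1],L^1)$, hence convex and $1$-Lipschitz; by general results on convex Lipschitz functionals on Banach spaces directional Hadamard differentiability at every $G\neq 0$ is then automatic, so the real content is identifying the derivative. Since a self-contained derivation is not much longer, I would argue directly. Write $\psi(G)=\norm{G}_{\infty,1}$ and, for $g,h\in L^1$, set $d(g,h)=\int_{\{g\neq 0\}}\text{sgn}(g(x))h(x)\,dx+\int_{\{g=0\}}|h(x)|\,dx$. Two elementary facts are used repeatedly: (i) since $s\mapsto\norm{G(s,\cdot)}_1$ is continuous on the compact interval $[0,1]$ it attains its maximum, so the set $\mathcal{E}$ in the statement is nonempty (and compact); (ii) for fixed $g,h$ the difference quotient $t\mapsto t^{-1}(\norm{g+th}_1-\norm{g}_1)$ is nondecreasing by convexity and, splitting the integral over $\{g=0\}$ (where it equals $\int_{\{g=0\}}|h|$ for every $t>0$) and $\{g\neq 0\}$ (where dominated convergence applies, the integrand being bounded by $|h|\in L^1$), converges to $d(g,h)$ as $t\downarrow 0$; moreover $|d(g,h)-d(g,h')|\le\norm{h-h'}_1$.

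For the lower bound, fix $t_n\downarrow 0$ and $H_n\to H$ in $C([0,1],L^1)$. For any $s\in\mathcal{E}$, using $\psi(G)=\norm{G(s,\cdot)}_1$, fact (ii) and the Lipschitz bound on $d(g,\cdot)$,
\begin{align}
\frac{\psi(G+t_nH_n)-\psi(G)}{t_n}&\ge\frac{\norm{G(s,\cdot)+t_nH_n(s,\cdot)}_1-\norm{G(s,\cdot)}_1}{t_n}\\
&\ge d(G(s,\cdot),H_n(s,\cdot))\ge d(G(s,\cdot),H(s,\cdot))-\norm{H_n-H}_{\infty,1}.
\end{align}
Letting $n\to\infty$ and then taking the supremum over $s\in\mathcal{E}$ yields $\liminf_n t_n^{-1}(\psi(G+t_nH_n)-\psi(G))\ge \sup_{s\in\mathcal{E}}d(G(s,\cdot),H(s,\cdot))=:B$, which is exactly the asserted derivative evaluated at $H$.

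For the upper bound — the main obstacle — pick $s_n$ with $\norm{G(s_n,\cdot)+t_nH_n(s_n,\cdot)}_1\ge\psi(G+t_nH_n)-1/n$, and by compactness of $[0,1]$ pass to a subsequence with $s_n\to s_0$. Combining this near-optimality with $\psi(G+t_nH_n)\ge\psi(G)-t_n\norm{H_n}_{\infty,1}$ and the continuity of $s\mapsto\norm{G(s,\cdot)}_1$ forces $\norm{G(s_0,\cdot)}_1=\psi(G)$, i.e. $s_0\in\mathcal{E}$; one also gets $G(s_n,\cdot)\to G(s_0,\cdot)$ and $H_n(s_n,\cdot)\to H(s_0,\cdot)$ in $L^1$ (via $\norm{H_n(s_n,\cdot)-H(s_0,\cdot)}_1\le\norm{H_n-H}_{\infty,1}+\norm{H(s_n,\cdot)-H(s_0,\cdot)}_1$). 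Abbreviate $g_n=G(s_n,\cdot),h_n=H_n(s_n,\cdot),g_0=G(s_0,\cdot),h_0=H(s_0,\cdot)$. Bounding $\norm{g_n+t_nh_n}_1-\norm{g_n}_1$ pointwise via $|a+b|-|a|\le b\,\text{sgn}(a)+|b|$ is too crude; instead use the sharper elementary inequality $|a+b|-|a|\le\sigma b+2|b|\,\1\{\,\text{sgn}(a)\neq\sigma\ \text{or}\ |b|>|a|\,\}$, valid for any $\sigma\in\{-1,1\}$, with $\sigma=\text{sgn}(g_0(x))$ on $\{g_0\neq0\}$, together with $|a+b|-|a|\le|b|$ on $\{g_0=0\}$. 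Dividing by $t_n$ this gives
\begin{align}
\frac{\norm{g_n+t_nh_n}_1-\norm{g_n}_1}{t_n}&\le\int_{\{g_0\neq0\}}\text{sgn}(g_0)h_n\,dx+\int_{\{g_0=0\}}|h_n|\,dx\\
&\quad+2\int_{\{g_0\neq0\}}|h_n|\,\1\{\,\text{sgn}(g_n)\neq\text{sgn}(g_0)\ \text{or}\ t_n|h_n|>|g_n|\,\}\,dx.
\end{align}
The first two terms converge to $d(g_0,h_0)$; for the third, pass to a further subsequence along which $g_n\to g_0$ and $h_n\to h_0$ almost everywhere with $|h_n|\le\Psi$ for some $\Psi\in L^1$, so the integrand tends to $0$ pointwise on $\{g_0\neq0\}$ (there $g_n$ eventually has the sign of $g_0$ with $|g_n|$ bounded away from $0$, while $t_n|h_n|\to0$) and is dominated by $\Psi$, hence vanishes by dominated convergence. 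Since $s_0\in\mathcal{E}$ and $\psi(G+t_nH_n)\le\norm{g_n+t_nh_n}_1+1/n$, this shows $\limsup_n t_n^{-1}(\psi(G+t_nH_n)-\psi(G))\le d(g_0,h_0)\le B$ along that subsequence; as every subsequence of the original $(t_n,H_n)$ admits such a further subsequence, the $\limsup$ along the original sequence is $\le B$. Together with the lower bound the limit exists and equals $B$, i.e. $\psi$ is directionally Hadamard differentiable at $G$ with the stated derivative. The delicate steps, and the ones I expect to require the most care, are establishing $s_0\in\mathcal{E}$ from near-optimality and continuity, the separate treatment of $\{g_0=0\}$ and $\{g_0\neq0\}$, and the extraction of an almost-everywhere convergent, $L^1$-dominated subsequence of $H_n(s_n,\cdot)$, which is precisely what makes the error term disappear.
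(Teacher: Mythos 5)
Your argument is correct in substance but takes a genuinely different route from the paper. The paper does not attack $\norm{\cdot}_{\infty,1}$ directly: it factors the map as $\phi_3\circ\phi_2\circ\phi_1$ with $\phi_1(G)=|G|$, $\phi_2(F)(s)=\int_0^1F(s,x)\,dx$ and $\phi_3(f)=\norm{f}_\infty$, invokes the chain rule for directional Hadamard derivatives (Shapiro), cites C\'arcamo et al.\ for the differentiability of the sup-norm $\phi_3$, and reduces everything to the absolute-value map $\phi_1$, whose differentiability as a map into $C([0,1],L^1)$ it proves by pointwise convergence of the difference quotient in $s$ plus equicontinuity and Arzel\`a--Ascoli. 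You instead work with the composite functional in one go: a convexity/monotonicity lower bound over $s\in\mathcal{E}$, and an upper bound via near-maximizers $s_n\to s_0\in\mathcal{E}$, a sharpened pointwise inequality for $|a+b|-|a|$, and dominated convergence along an a.e.-convergent dominated subsequence of $H_n(s_n,\cdot)$. What your route buys is self-containedness (you effectively re-prove the C\'arcamo-type result for the outer supremum rather than citing it) and it makes transparent where the set $\mathcal{E}$ and the splitting $\{G(s,\cdot)=0\}$ versus $\{G(s,\cdot)\neq 0\}$ enter; what the paper's route buys is modularity, since the same $\phi_1$ lemma is reusable and the hardest composition step is outsourced to the chain rule. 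One small repair is needed in your upper bound: with $s_n$ chosen only up to an additive error $1/n$, the term $1/(nt_n)$ that appears after dividing by $t_n$ need not vanish. Since $s\mapsto\norm{G(s,\cdot)+t_nH_n(s,\cdot)}_1$ is continuous on the compact interval $[0,1]$, you should simply take $s_n$ to be an exact maximizer (or an $o(t_n)$-approximate one); with that change the argument goes through as written.
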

\begin{proof}
    We will establish directional Hadamard differentiability of the following three mappings.
    \begin{align}
      \phi_1:C([0,1],L^1) &\rightarrow C([0,1],L^1)\\
        G &\rightarrow |G|\\
        \phi_2:C([0,1],L^1)& \rightarrow C([0,1],\R)\\
        G & \rightarrow \Big( s \rightarrow \int_0^1 G(s,x)dx \Big)\\
        \phi_3:C([0,1],\R)&\rightarrow \R\\
        f &\rightarrow \norm{f}_\infty
    \end{align}
    The result then follows by an application of the chain rule. (See Proposition 3.6 in \cite{Shapiro1990}). The differentiability of $\phi_2$ is obvious as it is linear and bounded. The differentiability of $\phi_3$ is shown in \cite{Carcamo2020}. We therefore only need to consider $\phi_1$. We will show that the directional hadamard derivative at $G$ is given by
    \begin{align}
        D_G\phi_3:C([0,1],L^1)& \rightarrow C([0,1],L^1)\\
             H & \rightarrow \text{sgn}(G)H\1\{|G|>0\}+|H|\1\{G=0\}
    \end{align}
    To that end we will proceed by first establishing Gateaux directional differentiability which, by Lipschitz continuity of $\phi_3$, is equivalent to Hadamard directional differentiability. This in turn we establish by first showing that 
    \begin{align}
    \label{pb2}
        W_n(s)=\norm{\frac{|G(s,\cdot)+tH(s,\cdot)|-|G(s,\cdot)|}{t}-D_G\phi_3(H)(s,\cdot)}_1
    \end{align}
    converges pointwise to 0. We then show that the family of functions \eqref{pb2} (indexed in $t$) is equicontinuous in $s$ (and thereby relatively compact by Arzela Ascoli). This yields that the convergence is uniform which then gives the desired result.

    \textbf{Pointwise Convergence:}
    Fixing $s$ we are interested in the asymptotic behaviour of 
    \begin{align}
        t^{-1}\int_0^1 \Big| |G(s,x)+tH(s,x)|-|G(s,x)|-D_G\phi_3(H)(s,x)\Big|dx~.
    \end{align}
    We will show that for any sequence $t_n \rightarrow 0$ the sequence 
    \begin{align}
        Z_n(s,x)=t_n^{-1}\Big(|G(s,x)+tH(s,x)|-|G(s,x)|\Big)-D_G\phi_3(H)(s,x)
    \end{align}
    converges, for each  $s$, to 0 locally in measure and is uniformly integrable which then yields the desired pointwise convergeence statement for \eqref{pb2}. Uniform integrability follows from applying the triangle inequality to obtain
    \begin{align}
        Z_n(s,x) \leq 2|H(s,x)|~.
    \end{align}
    That $Z_n(s,x)$ converges to 0 almost everywhere for each fixed $s$ follows from a simple case distinction.
    \\    
    \textbf{Equicontinuity}
    We have by the (reverse) triangle inequality that
    \begin{align}
    \label{pb3}
        |W_n(s)-W_n(u)|&\leq  \norm{H(s,\cdot)-H(u,\cdot)}_1 + \norm{D_G\phi_3(H)(s,\cdot)-D_G\phi_3(H)(u,\cdot)}_1
    \end{align}
    Note that $s \rightarrow H(s)$ and $s \rightarrow D_G\phi_3(H)(s)$ are continuous functions on a compact set which are therefore uniformly continuous. This yields the desired equicontinuity of $W_n(s)$ by upper bounding \eqref{pb3} by a joint modulus of continuity of the two functions.
\end{proof}

\subsection{Theorem \ref{Thm:PowImp}}\hfill\\
If the conditions of Theorem \ref{Thm:BootCons} are satisfied we only need to show that $J=0$ with high probability when $H_0$ holds. For that it suffices to establish (under $H_0$) the weak convergence
\begin{align}
    \sqrt{n}\U_n \rightarrow \W
\end{align}
in $C([0,1],C([0,1])\simeq C([0,1]^2)$. This is a straightforward consequence of a strong invariance principle for the sums $S_m=\sum_{i=1}^m\epsilon_i$, the properties of brownian motion and the continuous mapping theorem. To obtain such an invariance princple we want to apply Theorem 6 from \cite{Dehling1983}. We adopt their notation from pages 399 and 400 and choose $S=([0,1],|\cdot|^\alpha)$ so that $g(\epsilon)=1/\epsilon^\alpha$. We hence need to show that
    \begin{align}
    \label{p7}
        \sup_{m \in \N}\E\left[n^{-1}\sup_{|t-t^\prime|^\alpha<1/n^\alpha}|S_m(t)-S_m(t^\prime)|^2\right]\lesssim n^{-s}
    \end{align}
    for some $s>0$. We want to apply Theorem 2.2.4 from \cite{Wellner1996} to bound the left hand quantity. To be able to apply Theorem 2.2.4 we need to show that
    \begin{align}
    \label{equicont}
        n^{-1/2}\E[|S_m(t)-S_m(t^\prime)|^2]^{1/2}\leq K_1 |t-t^\prime|^\alpha~,
    \end{align}
    which follows by Assumptions (A2) and (A3) and the arguments used for the proof of Theorem 3 from \cite{Yoshi78}. Theorem 2.2.4 then yields that for any $\nu>0$ and some $K_2>0$ depending only on $K_1$ we have
    \begin{align}
        \E\left[n^{-1}\sup_{|t-t^\prime|^\alpha<1/n^\alpha}|S_m(t)-S_m(t^\prime)|^2\right]^{1/2}
        &\leq K_2 \Big( \int_0^\nu \epsilon^{-1/(2\alpha)}d\epsilon+n^{-\alpha} \nu^{-2/(2\alpha)}\Big) \\
        &=K_2 \Big(\frac{\nu^{1-1/(2\alpha)}}{1-1/(2\alpha)}+n^{-\alpha}\nu^{-2/(2\alpha)}\Big)~.
    \end{align}
    Choosing, for instance, $\nu=n^{-\alpha/4}$ yields \eqref{p7} and finishes the proof.\\

    We are left with showing that we may apply Theorem \ref{Thm:BootCons} for which conditions A3) and A4) are missing. As the above invariance principle also applies to an iid sequence of $\epsilon_i$ there exists a Gaussian random variable taking values in $C[0,1] \subset L^1$ with the same covariance operator as $\epsilon_i$. This yields that $\epsilon_i$ is pregaussian in $C[0,1]$ (and hence also $L^1$) and therefore that A3) is satisfied. Equation \eqref{equicont} also yields that 
    \begin{align}   
        n^{-1/2}\E[|G(t)-G(t^\prime)|^2]^{1/2}\leq K_1 |t-t^\prime|^\alpha~,
    \end{align}
    where $G$ is the weak limit of $n^{-1/2}S_n$. This immediately yields A4)
\subsection{Theorem \ref{Thm:Alter}}\hfill\\
\begin{proof}
In the proof of Theorem \ref{Thm:PowImp} we obtain an  invariance principle for $C([0,1])$ valued data under the conditions of this theorem. This also yields invariance principles in the spaces $L^1$ and $L^2$. The result for the $L^1$ norm then follows by the proof of Theorem \ref{Thm:BootTest}. The proofs for the $L^2$ and supremum norm follow along similar lines, are simpler and are therefore omitted. The necessary directional Hadamard derivatives are either easy to obtain ($L^2$ case) or available from \cite{Carcamo2020}.
\end{proof}

\bibliographystyle{apalike}

\bibliography{main}
\end{document}